\DeclareSymbolFont{mathdesignA}{MDA}{mdput}{m}{n}
\DeclareSymbolFontAlphabet{\mathbb}{mathdesignA}
\DeclareFontFamily{OMX}{MnSymbolE}{}
\DeclareSymbolFont{MnLargeSymbols}{OMX}{MnSymbolE}{m}{n}
\DeclareFontShape{OMX}{MnSymbolE}{m}{n}{
    <-6>  MnSymbolE5
   <6-7>  MnSymbolE6
   <7-8>  MnSymbolE7
   <8-9>  MnSymbolE8
   <9-10> MnSymbolE9
  <10-12> MnSymbolE10
  <12->   MnSymbolE12
}{}
\DeclareFontShape{OMX}{MnSymbolE}{b}{n}{
    <-6>  MnSymbolE-Bold5
   <6-7>  MnSymbolE-Bold6
   <7-8>  MnSymbolE-Bold7
   <8-9>  MnSymbolE-Bold8
   <9-10> MnSymbolE-Bold9
  <10-12> MnSymbolE-Bold10
  <12->   MnSymbolE-Bold12
}{}
\DeclareMathDelimiter{[}{\mathopen}{MnLargeSymbols}{'000}{MnLargeSymbols}{'000}
\DeclareMathDelimiter{]}{\mathclose}{MnLargeSymbols}{'005}{MnLargeSymbols}{'005}
\DeclareMathDelimiter{\llbr}{\mathopen}{MnLargeSymbols}{'102}{MnLargeSymbols}{'102}
\DeclareMathDelimiter{\rrbr}{\mathclose}{MnLargeSymbols}{'107}{MnLargeSymbols}{'107}
\newcommand{\initlengths}{%
    \setlength{\abovedisplayshortskip}{3pt plus 9pt minus 3pt}%
    \setlength{\belowdisplayshortskip}{9pt plus 9pt minus 9pt}%
    \setlength{\abovedisplayskip}{9pt plus 9pt minus 9pt}%
    \setlength{\belowdisplayskip}{9pt plus 9pt minus 9pt}%
    \hfuzz 1pt%
    \tolerance 400
}
\numberwithin{paragraph}{subsection}
\newcommand{\parafont}{\bfseries}
\newcommand{\parasep}{9pt plus 3pt minus 3pt}
\titleformat{\section}{\Large\LibertinusDisplay}{\thesection}{1em}{}
\titleformat{\subsection}{\large\firamedium\boldmath}{\thesubsection}{1em}{}
\titleformat{\paragraph}[runin]{\parafont}{\theparagraph.}{.33em}{\normalfont\bfseries\boldmath}
\titlespacing{\paragraph}{0pt}{\parasep}{.5em}
\renewenvironment{abstract}{%
    \centering\begin{minipage}{.85\textwidth}%
    \setlength{\parindent}{1.5em}%
    \centerline{\large\firamedium\abstractname}%
    \par\vspace{12pt}%
}{\end{minipage}\par\vspace{3pt}}
\newcommand*{\@parabookmark}{%
  \pdfbookmark[3]{%
    \theparagraph
    \ifx\@currentlabelname\@empty
    \else
      .\space\@currentlabelname%
    \fi
  }{\theparagraph}
}
\newcommand*{\@thmbookmark}{%
  \pdfbookmark[3]{%
    \theparagraph.\space\thmt@thmname
    \ifx\@currentlabelname\@empty
    \else
      .\space\@currentlabelname%
    \fi
  }{\theparagraph}
}
\newcommand*{\parabookmark}{\@parabookmark}
\newcommand*{\thmbookmark}{\@thmbookmark}
\newcommand*{\suppressparabookmarks}{%
  \renewcommand*{\parabookmark}{}%
  \renewcommand*{\thmbookmark}{}%
}
\newcommand*{\resumeparabookmarks}{%
  \renewcommand*{\parabookmark}{\@parabookmark}%
  \renewcommand*{\thmbookmark}{\@thmbookmark}%
}
\declaretheoremstyle[
    spaceabove=\parasep, spacebelow=\parasep,
    postheadspace=.5em,
    postheadhook=\thmbookmark,
    headfont=\normalfont\bfseries,
    headpunct={},
    headformat={\NUMBER.\@\ \NAME.\@\NOTE},
    notefont=\normalfont\bfseries\boldmath,
    notebraces={}{.},
    bodyfont=\itshape,
]{theorem}
\declaretheoremstyle[
    spaceabove=\parasep, spacebelow=\parasep,
    postheadspace=.5em,
    headfont=\normalfont\bfseries,
    headpunct={},
    headformat={\NAME.\@\NOTE},
    notefont=\normalfont\bfseries\boldmath,
    notebraces={}{.},
    bodyfont=\itshape,
]{theorem*}
\declaretheoremstyle[
    spaceabove=\parasep, spacebelow=\parasep,
    postheadspace=.5em,
    postheadhook=\thmbookmark,
    headfont=\normalfont\bfseries,
    headpunct={},
    headformat={\NUMBER.\@\ \NAME.\@\NOTE},
    notefont=\normalfont\bfseries\boldmath,
    notebraces={}{.},
]{definition}
\declaretheoremstyle[
    spaceabove=\parasep, spacebelow=\parasep,
    postheadspace=.5em,
    postheadhook=\parabookmark,
    headfont=\normalfont\bfseries,
    headpunct={},
    headformat={\NUMBER.\@\NOTE},
    notefont=\normalfont\bfseries\boldmath,
    notebraces={}{.},
]{para}
\renewenvironment{proof}[1][\proofname]{\par
    \pushQED{\qed}%
    \normalfont\trivlist
    \item[\hskip\labelsep\bfseries #1\@addpunct{.}]\ignorespaces
}{%
    \popQED\endtrivlist\@endpefalse
}
\declaretheorem[sibling=paragraph, style=para, refname={\S,\S\S}]{para}
\declaretheorem[sibling=paragraph, style=theorem, name=Theorem]{theorem}
\declaretheorem[sibling=paragraph, style=theorem, name=Lemma]{lemma}
\declaretheorem[sibling=paragraph, style=theorem, name=Corollary]{corollary}
\declaretheorem[numbered=no, style=theorem*, name=Theorem]{theorem*}
\declaretheorem[numbered=no, style=theorem*, name=Lemma]{lemma*}
\declaretheorem[sibling=paragraph, style=definition, name=Example]{example}
\numberwithin{equation}{paragraph}
\crefname{figure}{Figure}{Figures}
\setlist{noitemsep}
\setlist[enumerate]{label=\textnormal{(\roman*)}}
\newcommand{\calHom}{{\mathcal{H}\mspace{-5mu}\mathit{om}}}
\newcommand{\Gm}{\mathbb{G}_\mathrm{m}}
\newcommand{\heart}{\mathbin{\heartsuit}}
\newcommand{\subXsd}[1][\mathcal{X}]{{\scriptstyle{#1}^{\raisebox{-.2ex}{$\scriptscriptstyle\smash{\mathrm{sd}}$}}}}
\newcommand{\vdim}{\operatorname{vdim}}
\newcommand{\longsimto}{\mathrel{\overset{\smash{\raisebox{-.8ex}{$\sim$}}\mspace{3mu}}{\longrightarrow}}}
\newcommand{\simto}{\mathrel{\overset{\smash{\raisebox{-.8ex}{$\sim$}}\mspace{3mu}}{\to}}}
\newcommand{\simTo}{\mathrel{\overset{\smash{\raisebox{-.8ex}{$\sim$}}\mspace{3mu}}{\Rightarrow}}}
\newcommand{\leftsubstack}[2][6em]{\substack{\makebox[#1][l]{\scriptsize$\begin{aligned}#2\end{aligned}$}}}
\renewcommand{\geq}{\geqslant}
\renewcommand{\leq}{\leqslant}
\def\big#1{{\hbox{$\left#1\vbox to10\p@{}\right.\n@space$}}}
\title{Orthosymplectic Donaldson--Thomas theory}
\author{Chenjing Bu}
\date{}
\begin{document}

\initlengths

\maketitle

\begin{abstract}
    We construct and study Donaldson--Thomas invariants
counting orthogonal and symplectic objects
in linear categories,
which are a generalization of the usual Donaldson--Thomas invariants
from the structure groups $\mathrm{GL} (n)$
to the groups $\mathrm{O} (n)$ and $\mathrm{Sp} (2n)$,
and a special case of the intrinsic Donaldson--Thomas theory
developed by the author, Halpern-Leistner, Ibáñez Núñez, and Kinjo
\mbox{\cite{epsilon-i,epsilon-ii,epsilon-iii}}.
Our invariants are defined using the motivic Hall algebra
and its orthosymplectic analogue,
the motivic Hall module.
We prove wall-crossing formulae for our invariants,
which relate the invariants with respect to different stability conditions.

As examples, we define Donaldson--Thomas invariants counting
orthogonal and symplectic perfect complexes on a Calabi--Yau threefold,
and Donaldson--Thomas invariants counting self-dual representations of
a self-dual quiver with potential.
In the case of quivers, we compute the invariants explicitly in some cases.
We also define a motivic version of Vafa--Witten invariants
counting orthogonal and symplectic Higgs complexes
on a class of algebraic surfaces.

\end{abstract}

\clearpage
\vspace*{12pt}
{
    \hypersetup{linkcolor=black}
    \tableofcontents
}

\clearpage
\suppressparabookmarks
\section{Introduction}

\addtocounter{subsection}{1}

\begin{para}[Overview]
    \label{para-intro-overview}
    The theory of
    \emph{Donaldson--Thomas}~(DT) \emph{invariants}
    has been a central topic in enumerative geometry,
    initiated in the works of
    \textcite{donaldson-thomas-1998} and
    \textcite{thomas-2000-dt},
    and further developed by
    \textcite{joyce-song-2012},
    \textcite{kontsevich-soibelman-motivic-dt},
    and many others.

    The usual DT~invariants are, roughly speaking, virtual counts of
    semistable objects in $3$-Calabi--Yau linear categories,
    such as the category of coherent sheaves on a Calabi--Yau threefold,
    or the category of representations of a quiver with potential.
    These were first constructed by
    \textcite{joyce-song-2012},
    based on a formalism of
    \textcite{
        joyce-2006-configurations-i,
        joyce-2007-configurations-ii,
        joyce-2007-configurations-iii,
        joyce-2008-configurations-iv,
        joyce-2007-stack-functions}
    for constructing motivic enumerative invariants
    for abelian categories,
    and a similar approach was taken by
    \textcite{kontsevich-soibelman-motivic-dt}.

    However, until recently,
    it was not known how to extend this theory
    beyond the linear case.
    A step in this direction was taken by
    \textcite{bu-self-dual-i},
    who developed a theory
    of motivic enumerative invariants
    for moduli stacks of orthosymplectic objects,
    parallel to Joyce's formalism,
    and partially inspired by the works of
    \textcite{young-2016-hall-module,young-2015-self-dual,young-2020-quiver}
    on self-dual quivers.

    This theory was later generalized to \emph{intrinsic DT theory}
    by the author, Halpern-Leistner, Ibáñez Núñez, and Kinjo
    \cite{epsilon-i,epsilon-ii,epsilon-iii},
    which is a new framework for enumerative geometry
    that is intrinsic to the moduli stack,
    and applies to all algebraic stacks satisfying mild assumptions.
    The new framework also makes it possible to significantly simplify
    the original work \cite{bu-self-dual-i}.

    The present paper supersedes the author's work \cite{bu-self-dual-i}
    by updating and simplifying the theory
    using the new framework of \cite{epsilon-i,epsilon-ii,epsilon-iii}.

    The main goal of this paper is to construct DT invariants
    counting \emph{self-dual objects},
    or orthosymplectic objects,
    in self-dual $3$-Calabi--Yau linear categories.
    Self-dual objects typically do not form linear categories,
    so the usual linear DT theory does not apply.

    As applications, we construct
    DT invariants counting
    orthosymplectic representations of self-dual quivers with potential,
    DT invariants counting
    orthosymplectic perfect complexes on
    curves and Calabi--Yau threefolds,
    and also a motivic version of Vafa--Witten invariants
    counting orthosymplectic Higgs complexes
    on a class of algebraic surfaces.

    Orthosymplectic DT invariants
    are related to counting D-branes
    in string theories on Calabi--Yau \emph{$3$-orientifolds},
    discussed in, for example,
    \textcite[\S5.2]{witten-1998-d-branes},
    \textcite{diaconescu-2007-orientifolds},
    and \textcite{hori-walcher-2008-orientifolds}.
\end{para}

\begin{para}[The setting]
    \label{para-intro-setting}
    The basic setting of our construction is as follows.
    We start with a \emph{self-dual linear category}~$\mathcal{A}$,
    that is a linear category equipped with a contravariant involution
    \begin{equation*}
        (-)^\vee \colon \mathcal{A} \longsimto \mathcal{A}^\mathrm{op} \ .
    \end{equation*}
    For example, $\mathcal{A}$ could be the category of
    vector bundles on a smooth projective curve,
    where the involution is given by taking the dual bundle.
    See \cref{subsec-sd-cat} for more details and examples.

    We then consider the fixed locus of the involution,
    which is a groupoid~$\mathcal{A}^\mathrm{sd}$,
    called the groupoid of \emph{self-dual objects}.
    Explicitly, an object of~$\mathcal{A}^\mathrm{sd}$ is a pair $(x, \phi)$,
    where~$x$ is an object of~$\mathcal{A}$,
    and $\phi \colon x \simto x^\vee$ is an isomorphism
    satisfying $\phi = \phi^\vee$.
    For example, in the case of vector bundles,
    $\mathcal{A}^\mathrm{sd}$ consists of
    either orthogonal or symplectic vector bundles,
    depending on the choice of a sign $\varepsilon = \pm 1$
    when identifying a vector bundle with its double dual,
    which is a part of the data of the involution.

    We also assume that~$\mathcal{A}$ is equipped with
    a moduli stack~$\mathcal{X}$ of objects,
    so the involution~$(-)^\vee$ defines an involution on~$\mathcal{X}$.
    Its fixed locus~$\mathcal{X}^\mathrm{sd}$
    is the moduli stack of objects in~$\mathcal{A}^\mathrm{sd}$.

    In fact, our theory does not essentially use
    the categories~$\mathcal{A}$ and~$\mathcal{A}^\mathrm{sd}$,
    and only depends on the stack~$\mathcal{X}$,
    equipped with its linear structure and involution.
\end{para}

\begin{para}[Harder--Narasimhan filtrations]
    Recall that given a \emph{stability condition}~$\tau$
    on an abelian category~$\mathcal{A}$, which, for simplicity,
    we assume is given by a slope $\tau (x) \in \mathbb{R}$
    for each non-zero object $x \in \mathcal{A}$,
    satisfying certain conditions,
    there is the notion of \emph{$\tau$-semistable objects},
    and every object~$x$ has a unique
    \emph{Harder--Narasimhan}~(HN) \emph{filtration}
    \begin{equation*}
        \begin{tikzcd}[sep=small]
            \mathllap{0 = {}} x_0 \ar[r, hook] &
            x_1 \ar[r, hook] \ar[d, two heads] &
            x_2 \ar[r, hook] \ar[d, two heads] &
            \cdots \ar[r, hook] &
            x_k \mathrlap{{} = x} \ar[d, two heads] \\
            & y_1 & y_2 & & y_k \rlap{ ,}
        \end{tikzcd}
    \end{equation*}
    with each quotient $y_i = x_i / x_{i - 1}$
    non-zero and semistable, such that
    $\tau (y_1) > \cdots > \tau (y_k)$.

    Now, suppose that~$\mathcal{A}$ is self-dual,
    and that~$\tau$ is compatible with the self-dual structure,
    in that $\tau (x^\vee) = - \tau (x)$ for all non-zero objects~$x$.
    Then, for a non-zero self-dual object $(x, \phi)$,
    we necessarily have $\tau (x) = 0$,
    and the self-dual structure~$\phi$ induces isomorphisms
    $y_i \simto y_{\smash{k+1-i}}^\vee$ of its HN~factors.
    In particular, if~$k$ is odd,
    the middle piece~$y_{(k + 1) / 2}$
    admits an induced self-dual structure.
    For convenience, when~$k$ is even,
    we sometimes think of it as having the zero self-dual object as the middle piece.

    Therefore, heuristically speaking,
    if we think of objects of~$\mathcal{A}$
    as composed of semistable objects via HN~filtrations,
    we should think of an object of~$\mathcal{A}^\mathrm{sd}$
    as composed of a series of semistable objects of~$\mathcal{A}$,
    which are those in the left half of the HN filtration,
    together with a single semistable self-dual object
    in~$\mathcal{A}^\mathrm{sd}$ in the middle;
    the factors in the right half are dual to those on the left,
    and do not contain new information.
\end{para}

\begin{para}[Orthosymplectic modules]
    \label{para-intro-osp-modules}
    The above phenomenon suggests that the collection of
    orthosymplectic objects can be viewed as
    a module for the collection of linear objects in some sense.
    This can be made precise in several different ways.

    For example, if we consider the symmetric monoidal groupoid
    $(\mathcal{A}^\simeq, \oplus, 0)$,
    where $\mathcal{A}^\simeq$ is the underlying groupoid of~$\mathcal{A}$,
    then the operation
    $\oplus^\mathrm{sd} \colon \mathcal{A}^\simeq \times \mathcal{A}^\mathrm{sd}
    \to \mathcal{A}^\mathrm{sd}$
    sending $(x, y)$ to $x \oplus y \oplus x^\vee$,
    equipped with the obvious self-dual structure,
    establishes~$\mathcal{A}^\mathrm{sd}$ as a module for~$\mathcal{A}^\simeq$.
    Similarly, the corresponding operation
    $\oplus^\mathrm{sd} \colon \mathcal{X} \times \mathcal{X}^\mathrm{sd}
    \to \mathcal{X}^\mathrm{sd}$
    on moduli stacks establishes~$\mathcal{X}^\mathrm{sd}$
    as a module for the commutative monoid stack~$(\mathcal{X}, \oplus, 0)$.

    As another example,
    consider the \emph{ring of motives}
    $\mathbb{M} (\mathcal{X})$ over~$\mathcal{X}$,
    which we define in \cref{subsec-motives}.
    It has the structure of the \emph{motivic Hall algebra},
    introduced by \textcite{joyce-2007-configurations-ii},
    whose multiplication~$*$
    is roughly given by parametrizing all possible extensions
    of given objects.
    Note that this is a different multiplication
    from the one given by the ring structure on the ring of motives.
    In the orthosymplectic case, the ring of motives
    $\mathbb{M} (\mathcal{X}^\mathrm{sd})$
    is a module for the motivic Hall algebra
    $\mathbb{M} (\mathcal{X})$,
    which we call the \emph{motivic Hall module}.
    The module structure~$\diamond$,
    which we define in \cref{subsec-hall},
    is roughly given by parametrizing
    three-step self-dual filtrations,
    with given graded pieces $x, y, x^\vee$,
    with~$x$ an ordinary object and~$y$ a self-dual object.

    Other similar constructions include
    \emph{cohomological Hall modules}
    considered by \textcite{young-2020-quiver},
    and twisted modules for Joyce vertex algebras
    introduced by \textcite{bu-self-dual-ii}.
\end{para}

\begin{para}[Epsilon motives]
    \label{para-intro-epsilon}
    The first main construction of this paper
    is that of \emph{epsilon motives}
    for the moduli stack~$\mathcal{X}^\mathrm{sd}$,
    which we present in \cref{subsec-epsilon},
    parallel to the construction of
    \textcite{
        joyce-2006-configurations-i,
        joyce-2007-configurations-ii,
        joyce-2007-configurations-iii,
        joyce-2008-configurations-iv,
        joyce-2007-stack-functions}
    in the linear case.

    Given a stability condition~$\tau$,
    for each non-zero connected component
    $\mathcal{X}_\alpha \subset \mathcal{X}$,
    corresponding to an element
    $\alpha \in \uppi_0 (\mathcal{X}) \setminus \{ 0 \}$,
    Joyce defined the \emph{epsilon motive}
    \begin{equation*}
        \epsilon_\alpha (\tau) \in \mathbb{M} (\mathcal{X}) \ ,
    \end{equation*}
    living in the ring of motives over~$\mathcal{X}$,
    and supported on the semistable locus
    $\mathcal{X}^\mathrm{ss}_\alpha (\tau) \subset \mathcal{X}_\alpha$.
    It can be thought of as an interpolation
    between the stable locus and the semistable locus.

    The epsilon motive~$\epsilon_\alpha (\tau)$
    satisfies an important property called the \emph{no-pole theorem},
    originally proved by \textcite[Theorem~8.7]{joyce-2007-configurations-iii},
    which ensures that it has a well-defined Euler characteristic,
    which can be used to define DT invariants.
    In general, Euler characteristics of stacks are ill-behaved,
    as for example, any reasonable definition would give
    $\chi (* / \mathbb{G}_\mathrm{m}) = 1/0 = \infty$.
    The no-pole theorem ensures that for epsilon motives,
    this type of divergence does not occur
    after multiplying by the motive
    $[\mathbb{G}_\mathrm{m}] = \mathbb{L} - 1$,
    which roughly corresponds to removing the copy of~$\mathbb{G}_\mathrm{m}$
    from the stabilizer groups of all non-zero points of~$\mathcal{X}$,
    corresponding to scalar automorphisms of objects in~$\mathcal{A}$.

    In the orthosymplectic case,
    we present a parallel construction.
    Given a self-dual stability condition~$\tau$,
    for each connected component
    $\theta \in \uppi_0 (\mathcal{X}^\mathrm{sd})$,
    we define the \emph{epsilon motive}
    \begin{equation*}
        \epsilon^\mathrm{sd}_\theta (\tau) \in \mathbb{M} (\mathcal{X}^\mathrm{sd}) \ ,
    \end{equation*}
    supported on the semistable locus
    $\mathcal{X}^{\smash{\mathrm{sd, ss}}}_\theta (\tau)
    \subset \mathcal{X}^\mathrm{sd}_\theta$.
    It satisfies an analogous no-pole theorem,
    and will be used to define orthosymplectic DT invariants.

    The construction of the epsilon motives
    $\epsilon^\mathrm{sd}_\theta (\tau)$
    is a special case of a general construction in
    \emph{intrinsic DT theory} by \textcite[\S5.2]{epsilon-ii},
    and we build upon the general framework
    by specifying explicit combinatorial data,
    called \emph{stability measures} there,
    used to define the epsilon motives.
    We also use the motivic Hall module structure,
    described in \cref{para-intro-osp-modules},
    to make the construction more explicit in our setting.
\end{para}

\begin{para}[DT invariants]
    We now consider the case when~$\mathcal{X}$
    is either a smooth stack,
    or a \emph{$(-1)$-shifted symplectic stack}
    in the sense of \textcite{pantev-toen-vaquie-vezzosi-2013}.
    The latter case often occurs when~$\mathcal{A}$
    is a $3$-Calabi--Yau category,
    such as the category of coherent sheaves on a Calabi--Yau threefold,
    or more precisely, a modification of this category that is self-dual,
    which we describe in \cref{subsec-coh}.

    In this case,
    following \textcite{joyce-song-2012},
    we define the \emph{DT invariant}
    $\mathrm{DT}_\alpha (\tau) \in \mathbb{Q}$
    for~$\mathcal{X}$
    and a class $\alpha \in \uppi_0 (\mathcal{X}) \setminus \{ 0 \}$
    as a weighted Euler characteristic
    \begin{equation*}
        \mathrm{DT}_\alpha (\tau) =
        \int_{\mathcal{X}} {}
        (1 - \mathbb{L}) \cdot
        \epsilon_\alpha (\tau) \cdot \nu_{\mathcal{X}} \, d \chi \ ,
    \end{equation*}
    where~$\nu_{\mathcal{X}}$ is the \emph{Behrend function} of~$\mathcal{X}$,
    a constructible function on~$\mathcal{X}$
    introduced by \textcite{behrend-2009-dt}
    to capture the virtual geometry of~$\mathcal{X}$.
    Multiplying by the factor $(1 - \mathbb{L})$ corresponds to
    removing a copy of~$\mathbb{G}_\mathrm{m}$
    from all stabilizer groups in~$\mathcal{X}$,
    as we mentioned in~\cref{para-intro-epsilon}.

    Using our self-dual epsilon motives,
    in \cref{subsec-dt},
    we define \emph{self-dual DT invariants}
    $\mathrm{DT}^\mathrm{sd}_\theta (\tau) \in \mathbb{Q}$
    for~$\mathcal{X}^\mathrm{sd}$ and a class
    $\theta \in \uppi_0 (\mathcal{X}^\mathrm{sd})$
    by a similar weighted Euler characteristic
    \begin{equation*}
        \mathrm{DT}^\mathrm{sd}_\theta (\tau) =
        \int_{\mathcal{X}^\mathrm{sd}} {}
        \epsilon^\mathrm{sd}_\theta (\tau) \cdot \nu_{\mathcal{X}^\mathrm{sd}} \, d \chi \ ,
    \end{equation*}
    where we no longer need the factor $(1 - \mathbb{L})$,
    since a general point in~$\mathcal{X}^\mathrm{sd}$
    does not have $\mathbb{G}_\mathrm{m}$ in its stabilizer,
    as scaling a self-dual object does not preserve its self-dual structure.

    We also define motivic enhancements of these invariants,
    called \emph{motivic DT invariants},
    and denoted by $\mathrm{DT}^\mathrm{mot}_\alpha (\tau)$
    and $\mathrm{DT}^{\smash{\mathrm{mot,sd}}}_\theta (\tau)$.
    They live in a ring of monodromic motives over the base field,
    and in the $(-1)$-shifted symplectic case,
    they also depend on choices of orientations
    of the stacks~$\mathcal{X}$ and~$\mathcal{X}^\mathrm{sd}$.
    The linear case was due to
    \textcite{kontsevich-soibelman-motivic-dt}.
\end{para}

\begin{para}[Wall-crossing formulae]
    In \cref{sec-wcf},
    we prove \emph{wall-crossing formulae} for our DT invariants,
    which relate the DT invariants of~$\mathcal{X}^\mathrm{sd}$
    for different stability conditions~$\tau$,
    generalizing the wall-crossing formulae
    of \textcite{joyce-song-2012}.
    They are of the form
    \begin{align}
        \label{eq-intro-wcf}
        \mathrm{DT}^\mathrm{sd}_\theta (\tau_-)
        =
        \sum_{ \leftsubstack[6em]{
            & n \geq 0; \
            \alpha_1, \dotsc, \alpha_n \in \uppi_0 (\mathcal{X}) \setminus \{ 0 \}; \,
            \rho \in \uppi_0 (\mathcal{X}^\mathrm{sd}) \colon
            \\[-1ex]
            & \theta =
            \alpha_1 + \alpha_1^\vee + \cdots +
            \alpha_n + \alpha_n^\vee + \rho
        } } {}
        C (\alpha_1, \dotsc, \alpha_n, \rho; \tau_+, \tau_-) \cdot
        \mathrm{DT}_{\alpha_1} (\tau_+) \cdots
        \mathrm{DT}_{\alpha_n} (\tau_+) \cdot
        \mathrm{DT}^\mathrm{sd}_{\rho} (\tau_+) \ ,
        \raisetag{3ex}
    \end{align}
    where $\tau_\pm$ are self-dual stability conditions
    satisfying certain conditions,
    $C (\alpha_1, \dotsc, \alpha_n, \rho; \tau_+, \tau_-)$
    is a rational combinatorial coefficient,
    and the sum has only finitely many non-zero terms.
    A similar formula holds for the motivic DT invariants.

    A key ingredient in proving the wall-crossing formula
    is the \emph{motivic integral identity} for Behrend functions.
    In the linear case, the identity was proved by
    \textcite[Theorem~5.11]{joyce-song-2012}
    in a numerical form, and conjectured by
    \textcite[Conjecture~4]{kontsevich-soibelman-motivic-dt}
    in a stronger motivic form,
    which was proved by \textcite{le-2015-integral}.
    In the general case, the identity was formulated
    and proved by \textcite[Theorem~4.2.2]{bu-integral}.

    Wall-crossing formulae govern the structure of the DT invariants,
    and can be used to compute them explicitly in some cases.
    The same wall-crossing structure is also satisfied by
    other types of enumerative invariants in the orthosymplectic case,
    such as Joyce's~\cite{joyce-homological}
    homological enumerative invariants,
    extended by \textcite{bu-self-dual-ii}
    to the case of orthosymplectic quivers,
    where the wall-crossing formula~\cref{eq-intro-wcf}
    plays a central role.
\end{para}

\begin{para}[Application to quivers]
    In \cref{subsec-quiver-dt},
    we define DT invariants for
    \emph{self-dual quivers with potential},
    which are an orthosymplectic analogue of
    the usual DT theory for quivers with potential,
    studied in \textcite[Ch.~7]{joyce-song-2012}
    and \textcite[\S8]{kontsevich-soibelman-motivic-dt}.

    Self-dual quivers were first introduced by
    \textcite{derksen-weyman-2002},
    and studied by
    \textcite{young-2015-self-dual,young-2016-hall-module,young-2020-quiver}
    in the context of DT theory.
    These works were a main early source of inspiration
    for our work.

    We also provide an algorithm for computing DT invariants
    for self-dual quivers where the potential is zero,
    and present some numerical results.
    We mention a relation between self-dual quivers
    and orthosymplectic coherent sheaves
    in \cref{eg-dt-p1}.
\end{para}

\begin{para}[Application to sheaves]
    In \cref{subsec-threefolds},
    we define DT invariants counting
    orthosymplectic perfect complexes
    on Calabi--Yau threefolds.

    More precisely,
    we consider Bridgeland stability conditions on the threefold
    that are compatible with the self-dual structure,
    so the abelian category of semistable objects
    of slope~$0$ is self-dual.
    We then apply our theory to this category
    to define DT invariants.
    We also prove wall-crossing formulae
    relating the invariants for different
    Bridgeland stability conditions.

    As mentioned in \cref{para-intro-overview},
    we expect that these invariants are related to
    counting D-branes in orientifold string theories.

    In \cref{subsec-curves,subsec-vw},
    we discuss two variants of this theory,
    DT invariants for curves
    and motivic Vafa--Witten type invariants for
    del~Pezzo, K3, and abelian surfaces.
    These Vafa--Witten invariants are a motivic version and an orthosymplectic analogue
    of the theory of usual Vafa--Witten invariants developed by
    \textcite{tanaka-thomas-2020-vw-i,tanaka-thomas-2018-vw-ii}.
\end{para}

\begin{para}[Acknowledgements]
    The author thanks Dominic Joyce
    for his continuous support throughout this project,
    and for his many valuable comments and suggestions on the paper.
    The author also thanks
    Andrés Ibáñez Núñez
    and Tasuki Kinjo
    for helpful discussions.

    The author was supported by
    the Mathematical Institute, University of Oxford.
\end{para}

\begin{para}[Conventions]
    \label{para-intro-conventions}
    Throughout this paper, we use the following conventions:

    \begin{itemize}
        \item
            We work over a base field~$K$.

        \item
            All \emph{schemes}, \emph{algebraic spaces},
            and \emph{algebraic stacks} over~$K$
            are assumed to be locally of finite type over~$K$
            and have affine diagonal.

        \item
            A \emph{derived algebraic stack} over~$K$
            is a derived stack over~$K$
            that has an open cover by
            \emph{geometric stacks} in the sense of
            \textcite[\S2.2.3]{toen-vezzosi-2008},
            and is assumed locally almost of finite presentation.
    \end{itemize}
\end{para}

\resumeparabookmarks
\section{Categories and stacks}

\subsection{Self-dual categories}
\label{subsec-sd-cat}

\begin{para}
    We begin by introducing a notion of \emph{self-dual linear categories},
    as described in \cref{para-intro-setting}.
    This notion will not be essentially used in our main constructions,
    since we primarily work with moduli stacks of objects in such categories,
    which we will discuss in \cref{subsec-linear-stacks}.
    However, they will be useful in providing motivations,
    as well as in studying examples and applications.
\end{para}

\begin{para}[Self-dual linear categories]
    \label{para-sd-cat}
    A \emph{self-dual $K$-linear category} consists of the following data:
    \begin{itemize}
        \item
            A $K$-linear category $\mathcal{A}$.
        \item
            An equivalence of $K$-linear categories
            $(-)^\vee \colon \mathcal{A} \simto \mathcal{A}^\mathrm{op}$,
            called the \emph{dual functor}.
        \item
            A natural isomorphism
            $\eta \colon (-)^{\vee \vee} \simTo \mathrm{id}_{\mathcal{A}}$,
            such that for any object $x \in \mathcal{A}$, we have
            $\eta_{x^\vee} = (\eta_x^\vee)^{-1} \colon x^{\vee \vee \vee} \simto x^\vee$.
    \end{itemize}
    Given such a category~$\mathcal{A}$,
    a \emph{self-dual object} in~$\mathcal{A}$ is a pair~$(x, \phi)$,
    where $x \in \mathcal{A}$ and $\phi \colon x \simto x^\vee$ is an isomorphism,
    such that $\phi^\vee = \phi \circ \eta_x$:
    \vspace{-3pt}
    \begin{equation*}
        \begin{tikzcd}[row sep={1.8em,between origins}, column sep=2.5em]
            x \ar[dr, "\phi" {pos=.45},
                "\textstyle \sim" {pos=.52, anchor=center, rotate=-15, shift={(0, -1ex)}}] \\
            & x^\vee \rlap{ .} \\
            x^{\vee\vee} \ar[uu, "\eta_x" {pos=.45},
                "\textstyle \sim" {pos=.45, anchor=center, rotate=90, shift={(0, -1ex)}}]
            \ar[ur, "\phi^\vee"' {pos=.35, inner sep=.05em},
                "\textstyle \sim" {pos=.42, anchor=center, rotate=15, shift={(0, .6ex)}}]
        \end{tikzcd}
        \vspace{-3pt}
    \end{equation*}
    We denote by~$\mathcal{A}^\mathrm{sd}$
    the groupoid of self-dual objects in~$\mathcal{A}$,
    whose morphisms are isomorphisms compatible with the self-dual structures.

    More conceptually, ignoring size issues,
    a self-dual $K$-linear category is a fixed point
    of the $\mathbb{Z}_2$-action on the $2$-category of $K$-linear categories,
    given by taking the opposite category.
    A self-dual object in~$\mathcal{A}$ is a fixed point
    of the $\mathbb{Z}_2$-action on the underlying groupoid of~$\mathcal{A}$,
    given by the dual functor~$(-)^\vee$.

    From this viewpoint, we can analogously define
    self-dual categories and objects in the context of higher categories,
    although more coherence data is needed
    if we were to write down the axioms explicitly.
\end{para}

\begin{example}[Vector bundles]
    \label{eg-sd-cat-vb}
    Let $X$ be a $K$-scheme,
    and let $\mathcal{A} = \mathsf{Vect} (X)$
    be the $K$-linear exact category
    of vector bundles on $X$ of finite rank.

    For each choice of a sign $\varepsilon \in \{ \pm 1 \}$,
    there is a self-dual structure
    $(-)^\vee \colon \mathcal{A} \simto \mathcal{A}^\mathrm{op}$
    sending a vector bundle to its dual vector bundle,
    with the natural isomorphism
    $\eta \colon (-)^{\vee\vee} \simTo \mathrm{id}_{\mathcal{A}}$
    given by~$\varepsilon$ times the usual identification.

    A self-dual object in $\mathcal{A}$ is a pair $(E, \phi)$,
    where $E$ is a vector bundle on $X$,
    and $\phi \colon E \simto E^\vee$ is an isomorphism,
    satisfying $\phi^\vee = \phi \circ \eta_E$.
    Equivalently, $\phi$ is a non-degenerate
    symmetric (or~antisymmetric) bilinear form on~$E$
    when $\varepsilon = +1$ (or~$-1$).
    In particular, if $K$ is algebraically closed
    of characteristic~$\neq 2$,
    then self-dual objects of~$\mathcal{A}$
    can be identified with principal
    $\mathrm{O} (n)$-bundles (or~$\mathrm{Sp} (n)$-bundles) on~$X$.
\end{example}

\begin{example}[Self-dual quivers]
    Let $Q$ be a \emph{self-dual quiver},
    that is, a quiver with an involution $\sigma \colon Q \simto Q^\mathrm{op}$,
    where $Q^\mathrm{op}$ is the opposite quiver of $Q$.
    See \cref{subsec-quiver-dt} for details.

    Let $\mathcal{A} = \mathsf{Mod} (K Q)$
    be the $K$-linear abelian category of
    finite-dimensional representations of~$Q$ over~$K$.
    There is a self-dual structure
    $(-)^\vee \colon \mathcal{A} \simto \mathcal{A}^\mathrm{op}$
    sending a representation
    to the representation with the dual vector spaces and dual linear maps.
    This also involves choosing signs when defining
    $\eta \colon (-)^{\vee\vee} \simTo \mathrm{id}_{\mathcal{A}}$, as in the previous example.
    Again, see \cref{subsec-quiver-dt} for details.

    Self-dual objects in $\mathcal{A}$ are called
    \emph{self-dual representations} of $Q$,
    which we think of as analogues of
    orthogonal or symplectic bundles
    in the quiver setting.
\end{example}

\begin{para}[Non-example.\ Coherent sheaves]
    \label{para-non-eg-coh}
    Let~$X$ be a connected, smooth, projective $K$-variety
    of positive dimension,
    and let $\mathcal{A} = \mathsf{Coh} (X)$
    be the abelian category of coherent sheaves on~$X$.

    Then~$\mathcal{A}$ does not admit a self-dual structure.
    This is because~$\mathcal{A}$ is \emph{noetherian},
    meaning that every ascending chain of subobjects
    of a given object stabilizes,
    while it is not \emph{artinian},
    in that there exists an infinite descending chain of subobjects
    $\mathcal{O}_X \supset \mathcal{O}_X (-1) \supset \mathcal{O}_X (-2) \supset \cdots$.
    Since taking the opposite category
    exchanges the properties of
    being noetherian and artinian,
    the category~$\mathcal{A}$
    is not equivalent to~$\mathcal{A}^\mathrm{op}$.

    This problem can be fixed, however,
    by considering the derived category
    $\mathcal{D} = \mathsf{D}^\mathrm{b} \mathsf{Coh} (X)$,
    and taking an alternative heart
    $\mathcal{A}' \subset \mathcal{D}$
    that is compatible with derived duality,
    which can be constructed from Bridgeland stability conditions.
    See \cref{subsec-threefolds} for details.
\end{para}

\begin{para}[Self-dual filtrations]
    \label{para-sd-filt}
    We now discuss a useful construction
    in self-dual linear categories.

    Let~$\mathcal{A}$ be a \emph{self-dual exact category} over~$K$,
    meaning a $K$-linear exact category with a self-dual structure,
    such that the dual functor~$(-)^\vee$
    sends short exact sequences $y \hookrightarrow x \twoheadrightarrow z$
    to short exact sequences $z^\vee \hookrightarrow x^\vee \twoheadrightarrow y^\vee$.

    For an integer~$n \geq 0$,
    define the $K$-linear category~$\mathcal{A}^{(n)}$
    of \emph{$n$-step filtrations} in~$\mathcal{A}$
    where objects are diagrams
    \begin{equation}
        \label{eq-n-step-filt}
        \begin{tikzcd}[sep=small]
            \mathllap{0 = {}} x_0 \ar[r, hook] &
            x_1 \ar[r, hook] \ar[d, two heads] &
            x_2 \ar[r, hook] \ar[d, two heads] &
            \cdots \ar[r, hook] &
            x_n \mathrlap{{} = x} \ar[d, two heads] \\
            & y_1 & y_2 & & y_n \rlap{ ,}
        \end{tikzcd}
    \end{equation}
    with each sequence $x_{i-1} \hookrightarrow x_i \twoheadrightarrow y_i$
    short exact in~$\mathcal{A}$,
    and morphisms are usual morphisms of diagrams.
    Define the \emph{dual filtration} of~\cref{eq-n-step-filt}
    to be the $n$-step filtration
    \begin{equation}
        \label{eq-n-step-filt-dual}
        \begin{tikzcd}[sep=small]
            \mathllap{0 = {}} (x / x_n)^\vee \ar[r, hook] &
            (x / x_{n-1})^\vee \ar[r, hook] \ar[d, two heads] &
            (x / x_{n-2})^\vee \ar[r, hook] \ar[d, two heads] &
            \cdots \ar[r, hook] &
            (x / x_0)^\vee \mathrlap{{} = x^\vee} \ar[d, two heads] \\
            & y_n^\vee & y_{n-1}^\vee & & y_1^\vee \rlap{ ,}
        \end{tikzcd}
    \end{equation}
    where~$x / x_i$ denotes the cokernel of the inclusion $x_i \hookrightarrow x$,
    which exists by the axioms of an exact category.
    We have the short exact sequence
    $y_i \hookrightarrow x / x_{i-1} \twoheadrightarrow x / x_i$
    by the third isomorphism theorem,
    which holds in any exact category.

    This defines a self-dual structure on~$\mathcal{A}^{(n)}$.
    Its self-dual objects are called
    \emph{$n$-step self-dual filtrations} in~$\mathcal{A}$,
    and will be an important idea in our subsequent constructions.
\end{para}

\subsection{Moduli stacks}
\label{subsec-linear-stacks}

\begin{para}
    We describe a set of axioms for algebraic stacks
    that behave like moduli stacks of objects
    in linear categories and self-dual linear categories,
    based on the notion of \emph{linear moduli stacks}
    introduced by \textcite[\S7.1]{epsilon-i},
    which we call \emph{linear stacks} here.

    For the main constructions of this paper,
    it is enough to work only with the moduli stacks,
    without needing to refer to the original categories.
    This is also a benefit of the intrinsic framework for enumerative geometry
    developed in \cite{epsilon-i,epsilon-ii,epsilon-iii}.
\end{para}

\begin{para}[Graded and filtered points]
    \label{para-grad-filt}
    Let $\mathcal{X}$ be an algebraic stack over~$K$.
    Following \textcite{halpern-leistner-instability},
    define the \emph{stack of graded points}
    and the \emph{stack of filtered points} of~$\mathcal{X}$
    as the mapping stacks
    \begin{align*}
        \mathrm{Grad} (\mathcal{X})
        & = \mathrm{Map} ( {*} / \mathbb{G}_\mathrm{m}, \mathcal{X} ) \ ,
        \\
        \mathrm{Filt} (\mathcal{X})
        & = \mathrm{Map} ( \mathbb{A}^1 / \mathbb{G}_\mathrm{m}, \mathcal{X} ) \ ,
    \end{align*}
    where we use the scaling action of~$\mathbb{G}_\mathrm{m}$ on~$\mathbb{A}^1$.
    These are again algebraic stacks over~$K$.

    Consider the morphisms
    \begin{equation*}
        \begin{tikzcd}
            {*} / \mathbb{G}_\mathrm{m}
            \ar[shift left=0.5ex, r, "0"]
            &
            \mathbb{A}^1 / \mathbb{G}_\mathrm{m}
            \ar[shift left=0.5ex, l, "\mathrm{pr}"]
            &
            {*} \vphantom{^n} \ ,
            \ar[shift left=0.5ex, l, "1"]
            \ar[shift right=0.5ex, l, "0"']
            \ar[ll, bend right, start anchor=north west, end anchor=north east, looseness=.8]
        \end{tikzcd}
    \end{equation*}
    where $\mathrm{pr}$ is induced by the projection $\mathbb{A}^1 \to *$.
    These induce morphisms of stacks
    \begin{equation*}
        \begin{tikzcd}
            \mathrm{Grad} (\mathcal{X})
            \ar[rr, bend left, start anchor=north east, end anchor=north west, looseness=.8, "\smash{\mathrm{tot}}"]
            \ar[r, shift right=0.5ex, "\mathrm{sf}"']
            &
            \mathrm{Filt} (\mathcal{X})
            \ar[l, shift right=0.5ex, "\mathrm{gr}"']
            \ar[r, shift left=0.5ex, "\mathrm{ev}_0"]
            \ar[r, shift right=0.5ex, "\mathrm{ev}_1"']
            &
            \mathcal{X} \rlap{ ,}
        \end{tikzcd}
    \end{equation*}
    where the notations `$\mathrm{gr}$', `$\mathrm{sf}$', and `$\mathrm{tot}$' stand for
    the \emph{associated graded point},
    the \emph{split filtration},
    and the \emph{total point}, respectively.
    The morphism $\mathrm{gr}$ is an $\mathbb{A}^1$-deformation retract,
    and the morphisms $\mathrm{tot}$ and $\mathrm{ev}_1$ are representable,
    under our assumptions in \cref{para-intro-conventions}.
\end{para}

\begin{para}[Linear stacks]
    \label{para-linear-stacks}
    Following \textcite[\S7.1]{epsilon-i},
    define a \emph{linear stack} over~$K$
    to be the following data:

    \begin{itemize}
        \item
            An algebraic stack~$\mathcal{X}$ over~$K$.
        \item
            A commutative monoid structure
            $\oplus \colon \mathcal{X} \times \mathcal{X} \to \mathcal{X}$,
            with unit $0 \in \mathcal{X} (K)$.
        \item
            A $* / \Gm$-action
            $\odot \colon {*} / \Gm \times \mathcal{X} \to \mathcal{X}$
            respecting the monoid structure.
    \end{itemize}
    Note that these structures come with extra coherence data.

    In this case, the set~$\uppi_0 (\mathcal{X})$
    of connected components of~$\mathcal{X}$
    carries the structure of a commutative monoid.
    We denote its operation by~$+$, and its unit by~$0$.

    We require the following additional property:

    \begin{itemize}
        \item
            \label{item-linear-moduli-stack-grad}
            There is an isomorphism
            \begin{equation}
                \label{eq-linear-moduli-stack-grad}
                \coprod_{\gamma \colon \mathbb{Z} \to \uppi_0 (\mathcal{X})} {}
                \prod_{n \in \mathrm{supp} (\gamma)}
                \mathcal{X}_{\gamma (n)}
                \longsimto \mathrm{Grad} (\mathcal{X}) \ ,
            \end{equation}
            where~$\gamma$ runs through maps of sets
            $\mathbb{Z} \to \uppi_0 (\mathcal{X})$ such that
            $\mathrm{supp} (\gamma) = \mathbb{Z} \setminus \gamma^{-1} (0)$
            is finite, and the morphism is defined by the composition
            \begin{equation*}
                * / \Gm \times
                \prod_{n \in \mathrm{supp} (\gamma)} \mathcal{X}_{\gamma (n)}
                \overset{(-)^n}{\longrightarrow}
                \prod_{n \in \mathrm{supp} (\gamma)} {}
                (* / \Gm \times \mathcal{X}_{\gamma (n)})
                \overset{\odot}{\longrightarrow}
                \prod_{n \in \mathrm{supp} (\gamma)} \mathcal{X}_{\gamma (n)}
                \overset{\oplus}{\longrightarrow}
                \mathcal{X}
            \end{equation*}
            on the component corresponding to~$\gamma$,
            where the first morphism is given by the
            $n$-th power map $(-)^n \colon {*} / \Gm \to {*} / \Gm$
            on the factor corresponding to~$\mathcal{X}_{\gamma (n)}$.
    \end{itemize}
    We can think of~\cref{eq-linear-moduli-stack-grad}
    roughly as an isomorphism $\mathrm{Grad} (\mathcal{X}) \simeq \mathcal{X}^\mathbb{Z}$,
    where we only consider components of~$\mathcal{X}^\mathbb{Z}$
    involving finitely many non-zero classes in~$\uppi_0 (\mathcal{X})$.

    Most examples of
    moduli stacks of objects in abelian categories
    are linear stacks.
    See \cite[\S7.1.3]{epsilon-i} for details.
\end{para}

\begin{para}[Stacks of filtrations]
    \label{para-linear-filt}
    For a linear stack~$\mathcal{X}$,
    we have canonical isomorphisms
    \begin{equation*}
        \uppi_0 (\mathrm{Filt} (\mathcal{X})) \simeq
        \uppi_0 (\mathrm{Grad} (\mathcal{X})) \simeq
        \{
            \gamma \colon \mathbb{Z} \to \uppi_0 (\mathcal{X}) \mid
            \mathrm{supp} (\gamma) \text{ finite}
        \} \ ,
    \end{equation*}
    where the first isomorphism is induced by the morphism $\mathrm{gr}$,
    and the second is given by \cref{eq-linear-moduli-stack-grad}.

    For classes $\alpha_1, \dotsc, \alpha_n \in \uppi_0 (\mathcal{X})$,
    there is a \emph{stack of filtrations}
    \begin{equation*}
        \mathcal{X}_{\alpha_1, \dotsc, \alpha_n}^+
        \subset \mathrm{Filt} (\mathcal{X}) \ ,
    \end{equation*}
    defined as a component
    corresponding to a map~$\gamma$ as above
    whose non-zero values agree with
    the non-zero elements in $\alpha_n, \dotsc, \alpha_1$, preserving order.
    We think of this as the stack parametrizing $n$-step filtrations
    with stepwise quotients of classes
    $\alpha_1, \dotsc, \alpha_n$.
    The isomorphism type of this stack
    does not depend on the choice of~$\gamma$,
    as in \cite[\S7.1]{epsilon-i}.

    The morphisms defined in \cref{para-grad-filt}
    restrict to canonical morphisms
    $\mathrm{gr} \colon \mathcal{X}_{\alpha_1, \dotsc, \alpha_n}^+
    \to \mathcal{X}_{\alpha_1} \times \cdots \times \mathcal{X}_{\alpha_n}$
    and
    $\mathrm{ev}_1 \colon \mathcal{X}_{\alpha_1, \dotsc, \alpha_n}^+
    \to \mathcal{X}_{\alpha_1 + \dotsc + \alpha_n}$,
    sending a filtration to its
    associated graded object and total object, respectively.
    These do not depend on the choice of~$\gamma$.

    We say that~$\mathcal{X}$ has \emph{quasi-compact filtrations},
    if for any
    $\alpha_1, \dotsc, \alpha_n \in \uppi_0 (\mathcal{X})$,
    the morphism
    $\mathrm{ev}_1 \colon \mathcal{X}_{\alpha_1, \dotsc, \alpha_n}^+
    \to \mathcal{X}_{\alpha_1 + \dotsc + \alpha_n}$
    is quasi-compact.
    See also \textcite[Definition~3.8.1]{halpern-leistner-instability}.
    This is a very mild condition,
    and is satisfied by all examples of our interest.
\end{para}

\begin{para}[Self-dual linear stacks]
    \label{para-sd-linear-moduli-stacks}
    We now introduce a notion of \emph{self-dual linear stacks},
    which describe moduli stacks of objects
    in self-dual linear categories.

    Let~$\mathcal{X}$ be a linear stack over~$K$.
    A \emph{self-dual structure} on~$\mathcal{X}$
    is a $\mathbb{Z}_2$-action on~$\mathcal{X}$,
    given by an involution
    \begin{equation*}
        (-)^\vee \colon \mathcal{X} \longsimto \mathcal{X} \ ,
    \end{equation*}
    together with a $2$-isomorphism
    $\eta \colon (-)^{\vee \vee} \simTo \mathrm{id}_{\mathcal{X}}$
    with $\eta_{(-)^\vee} = (\eta_{\smash{(-)}}^\vee)^{-1}$
    similarly to \cref{para-sd-cat},
    such that the involution respects the monoid structure~$\oplus$ on~$\mathcal{X}$,
    and inverts the $* / \Gm$-action~$\odot$,
    meaning that it is equivariant with respect to the involution
    $(-)^{-1} \colon {*} / \Gm \to {*} / \Gm$.
    Note that extra coherence data is needed
    for these compatibility conditions as well.

    In this case, we call~$\mathcal{X}$
    a \emph{self-dual linear stack}.
    Define the \emph{stack of self-dual points} of~$\mathcal{X}$
    as the fixed locus
    \begin{equation*}
        \mathcal{X}^\mathrm{sd} = \mathcal{X}^{\mathbb{Z}_2} \ .
    \end{equation*}
    It has affine diagonal by \cref{lemma-fixed-affine} below.
    Note that this is different from the fixed locus
    of the automorphism~$(-)^\vee$ of~$\mathcal{X}$,
    which would give the fixed locus of the corresponding
    $\mathbb{Z}$-action on~$\mathcal{X}$,
    rather than that of the $\mathbb{Z}_2$-action.

    There is a monoid action
    \begin{equation*}
        \oplus^\mathrm{sd} \colon
        \mathcal{X} \times \mathcal{X}^\mathrm{sd}
        \longrightarrow \mathcal{X}^\mathrm{sd} \ ,
    \end{equation*}
    given by $(x, y) \mapsto x \oplus y \oplus x^\vee$.
    This induces a monoid action
    $\uppi_0 (\mathcal{X}) \times \uppi_0 (\mathcal{X}^\mathrm{sd})
    \to \uppi_0 (\mathcal{X}^\mathrm{sd})$,
    which we often denote by
    $(\alpha, \theta) \mapsto \alpha + \theta + \alpha^\vee$,
    where $\alpha + \alpha^\vee$ can also be seen as a class
    in~$\uppi_0 (\mathcal{X}^\mathrm{sd})$,
    corresponding to the case when $\theta = 0$.
\end{para}

\begin{example}
    Let $\mathcal{X} = \coprod_{n \in \mathbb{N}} {*} / \mathrm{GL} (n)$
    be the moduli stack of vector spaces over~$K$,
    which is a linear moduli stack.
    Consider the involution
    $(-)^\vee \colon {*} / \mathrm{GL} (n) \to {*} / \mathrm{GL} (n)$
    sending a vector space to its dual,
    or equivalently, sending a matrix to its inverse transpose.
    Then~$\mathcal{X}^\mathrm{sd}$ is the classifying stack
    of non-degenerate symmetric (or anti-symmetric) bilinear forms,
    depending on the choice of the $2$-morphism~$\eta$,
    similarly to \cref{eg-sd-cat-vb}.
    In particular, if~$K$ is algebraically closed of characteristic~$\neq 2$,
    then~$\mathcal{X}^\mathrm{sd}$ is either
    $\coprod_{n \in \mathbb{N}} {*} / \mathrm{O} (n)$
    or $\coprod_{n \in \mathbb{N}} {*} / \mathrm{Sp} (2n)$.
\end{example}

\begin{para}[Self-dual graded points]
    \label{para-sd-grad-filt}
    The involution on~$\mathcal{X}$ induces an involution
    on $\mathrm{Grad} (\mathcal{X})$, and we may identify
    $\mathrm{Grad} (\mathcal{X}^\mathrm{sd}) \simeq \mathrm{Grad} (\mathcal{X})^{\mathbb{Z}_2}$.
    This gives an isomorphism
    \begin{equation}
        \label{eq-linear-moduli-stack-sd-grad}
        \mathrm{Grad} (\mathcal{X}^\mathrm{sd}) \simeq
        \coprod_{\substack{
            \gamma \colon \mathbb{Z} \setminus \{ 0 \} \to \uppi_0 (\mathcal{X}) \\
            \textnormal{involutive,} \\
            \gamma (0) \in \uppi_0 (\mathcal{X}^{\smash{\mathrm{sd}}})
        }} {}
        \biggl(
            \mathcal{X}^\mathrm{sd}_{\gamma (0)} \times
            \prod_{n > 0 \colon \gamma (n) \neq 0}
            \mathcal{X}_{\gamma (n)}
        \biggr) \ ,
    \end{equation}
    where~$\gamma$ runs through finitely supported maps
    that are \emph{involutive}, meaning that
    $\gamma (-n) = \gamma (n)^\vee$ for all
    $n > 0$,
    and~$\gamma (0)$ is a convenient notation
    which is independent of the map~$\gamma$,
    and $\mathcal{X}^\mathrm{sd}_{\gamma (0)} \subset \mathcal{X}^\mathrm{sd}$
    denotes the component corresponding to~$\gamma (0)$.
\end{para}

\begin{para}[Self-dual filtrations]
    \label{para-sd-stack-filt}
    For classes $\alpha_1, \dotsc, \alpha_n \in \uppi_0 (\mathcal{X})$
    and $\theta \in \uppi_0 (\mathcal{X}^\mathrm{sd})$,
    define the \emph{stack of self-dual filtrations}
    \begin{equation*}
        \mathcal{X}_{\alpha_1, \dotsc, \alpha_n, \theta}^{\smash{\mathrm{sd}, +}}
        \subset \mathrm{Filt} (\mathcal{X}^\mathrm{sd})
    \end{equation*}
    as a component
    such that under the isomorphism
    $\uppi_0 (\mathrm{Filt} (\mathcal{X}^\mathrm{sd})) \simeq
    \uppi_0 (\mathrm{Grad} (\mathcal{X}^\mathrm{sd}))$,
    its corresponding map~$\gamma$ as above
    has $\gamma (0) = \theta$,
    and its non-zero values at positive integers agree with
    the non-zero elements in $\alpha_n, \dotsc, \alpha_1$, preserving order.
    This does not depend on the choice of~$\gamma$
    by the constancy theorem of
    \textcite[Theorem~6.1.2]{epsilon-i}.

    The stack $\mathcal{X}_{\alpha_1, \dotsc, \alpha_n, \theta}^{\smash{\mathrm{sd}, +}}$
    can be thought of as parametrizing
    \emph{self-dual filtrations} in the sense of
    \cref{para-sd-filt},
    with stepwise quotients of classes
    $\alpha_1, \dotsc, \alpha_n, \theta, \alpha_n^\vee, \dotsc, \alpha_1^\vee$.

    The morphisms defined in \cref{para-grad-filt}
    restrict to morphisms
    $\mathrm{gr} \colon \mathcal{X}_{\alpha_1, \dotsc, \alpha_n, \theta}^{\smash{\mathrm{sd}, +}}
    \to \mathcal{X}_{\alpha_1} \times \cdots \times \mathcal{X}_{\alpha_n} \times \mathcal{X}_{\theta}^\mathrm{sd}$
    and
    $\mathrm{ev}_1 \colon \mathcal{X}_{\alpha_1, \dotsc, \alpha_n, \theta}^{\smash{\mathrm{sd}, +}}
    \to \mathcal{X}_{\alpha_1 + \cdots + \alpha_n + \theta + \smash{\alpha_n^\vee + \cdots + \alpha_1^\vee}}^\mathrm{sd}$.
    If~$\mathcal{X}$ has quasi-compact filtrations
    as in \cref{para-linear-filt},
    then the morphism~$\mathrm{ev}_1$ described above
    is always quasi-compact.
\end{para}

\begin{lemma}
    \label{lemma-fixed-affine}
    Let~$\mathcal{X}$ be an algebraic stack
    with affine diagonal, acted on by~$\mathbb{Z}_2$.
    Then the forgetful morphism
    $\mathcal{X}^{\mathbb{Z}_2} \to \mathcal{X}$
    is affine.
\end{lemma}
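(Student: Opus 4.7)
The plan is to express the forgetful morphism as a composition of two affine morphisms over~$\mathcal{X}$, by first classifying an isomorphism $\phi \colon x \simto \sigma(x)$ (where $\sigma = (-)^\vee$) without the self-duality condition, and then imposing that condition as a closed cut. Writing $\eta \colon \sigma^2 \simTo \mathrm{id}_{\mathcal{X}}$ for the structure $2$-isomorphism, a $T$-point of $\mathcal{X}^{\mathbb{Z}_2}$ is a pair $(x, \phi)$ with $x \in \mathcal{X}(T)$ and $\phi \colon x \simto \sigma(x)$ such that $\sigma(\phi) \circ \phi = \eta_x$.

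First, I would introduce the intermediate stack $\mathcal{Y}$ defined by the $2$-Cartesian square
\begin{equation*}
    \begin{tikzcd}
        \mathcal{Y} \ar[r] \ar[d] & \mathcal{X} \ar[d, "\Delta"] \\
        \mathcal{X} \ar[r, "{(\mathrm{id}, \sigma)}"] & \mathcal{X} \times \mathcal{X} \rlap{ ,}
    \end{tikzcd}
\end{equation*}
whose $T$-points classify pairs $(x, \phi)$ with $\phi \colon x \simto \sigma(x)$, and no further condition. Since $\mathcal{X}$ has affine diagonal, $\Delta$ is affine, so by base change the left vertical map $\mathcal{Y} \to \mathcal{X}$ is affine.

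Next, over $\mathcal{Y}$ there is the universal isomorphism $\phi$, and hence two natural sections of the inertia stack $\mathcal{I}_{\mathcal{Y}}(x) := \mathcal{Y} \times_{\mathcal{X} \times \mathcal{X}, \Delta} \mathcal{X} \to \mathcal{Y}$, namely the constant section $\mathrm{id}_x$ and the composite $\eta_x^{-1} \circ \sigma(\phi) \circ \phi$. The fixed point stack $\mathcal{X}^{\mathbb{Z}_2}$ is the equalizer of these two sections. Since $\mathcal{I}_{\mathcal{Y}}(x) \to \mathcal{Y}$ is a base change of $\Delta_{\mathcal{X}}$, it is affine and in particular separated; the equalizer of two sections of a separated morphism is a closed immersion. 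Hence $\mathcal{X}^{\mathbb{Z}_2} \hookrightarrow \mathcal{Y}$ is a closed immersion, and therefore affine.

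Composing the two affine morphisms $\mathcal{X}^{\mathbb{Z}_2} \hookrightarrow \mathcal{Y} \to \mathcal{X}$ yields the claim. The only delicate point is the $2$-categorical bookkeeping needed to verify that the moduli description of $\mathcal{X}^{\mathbb{Z}_2}$ as a fixed locus agrees with the iterated pullback construction above, in particular that the coherence condition $\eta_{\sigma(x)} = (\eta_x^\vee)^{-1}$ on the $\mathbb{Z}_2$-action ensures the equalizer produces exactly the self-duality condition $\phi^\vee = \phi \circ \eta_x$ described in \cref{para-sd-cat} and \cref{para-sd-linear-moduli-stacks}; once this is set up, affineness follows purely formally from affineness of the diagonal.
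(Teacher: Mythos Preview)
Your proof is correct and takes essentially the same approach as the paper: both introduce the same intermediate stack $\mathcal{Y}$ (called $\mathcal{J}$ in the paper) as the fibre product of $\Delta$ and $(\mathrm{id},\sigma)$, deduce that $\mathcal{Y}\to\mathcal{X}$ is affine by base change of the diagonal, and then show $\mathcal{X}^{\mathbb{Z}_2}\hookrightarrow\mathcal{Y}$ is a closed immersion. The only cosmetic difference is in this last step: you cut it out as an equalizer of two sections of a separated morphism, whereas the paper equips $\mathcal{J}$ with a $\mathbb{Z}_2$-action relative to the trivial action on~$\mathcal{X}$ and base-changes to affines, but these amount to the same computation.
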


\begin{proof}
    Let
    $\mathcal{J} = \mathcal{X} \times_{j_0, \, \mathcal{X} \times \mathcal{X}, \, j_1} \mathcal{X}$,
    where~$j_0$ is the diagonal morphism,
    and $j_1 = (\mathrm{id}, i)$,
    where~$i$ is the involution.
    Let $\pi \colon \mathcal{J} \to \mathcal{X}$
    be the projection to the first factor,
    which is affine as~$\mathcal{X}$ has affine diagonal.
    Let $\mathbb{Z}_2$ act on~$\mathcal{J}$
    by the involution on the second factor,
    so we may identify
    $\mathcal{J}^{\mathbb{Z}_2} \simeq \mathcal{X}^{\mathbb{Z}_2}$.
    Then~$\pi$ is equivariant with respect to
    the trivial $\mathbb{Z}_2$-action on~$\mathcal{X}$,
    so the forgetful morphism
    $\mathcal{J}^{\mathbb{Z}_2} \to \mathcal{J}$
    is a closed immersion,
    which can be seen by base changing along morphisms
    from affine schemes to~$\mathcal{X}$.
    The composition
    $\mathcal{X}^{\mathbb{Z}_2} \simeq \mathcal{J}^{\mathbb{Z}_2}
    \to \mathcal{J} \to \mathcal{X}$
    is thus affine.
\end{proof}

Note that in this lemma,
we may allow the base to be any algebraic stack,
where we assume that the relative diagonal of~$\mathcal{X}$ is affine,
and that the action respects the structure morphism.

\subsection{Stability conditions}
\label{subsec-stack-stability}

\begin{para}
    We define a notion of \emph{stability conditions}
    on linear stacks,
    based on ideas in the works of
    \textcite{rudakov-1997-stability},
    \textcite{joyce-2007-configurations-iii},
    \textcite{bridgeland-2007-stability},
    as well as
    \textcite{halpern-leistner-instability}.
    Such a stability condition will determine a
    \emph{semistable locus} in the stack,
    and enumerative invariants will count the points
    in the semistable locus.
\end{para}

\begin{para}[\texorpdfstring{$\Theta$}{Θ}-stratifications]
    \label{para-theta-strat}
    We first define \emph{$\Theta$-stratifications}
    following \textcite{halpern-leistner-instability}.
    This is a geometric formulation of the existence and uniqueness
    of HN~filtrations.
    We slightly weaken the original definition
    by discarding the ordering on the set of strata,
    which we will not need.

    A \emph{$\Theta$-stratification} of a stack~$\mathcal{X}$
    is the following data:

    \begin{itemize}
        \item
            Open substacks
            $\mathcal{S} \subset \mathrm{Filt} (\mathcal{X})$
            and $\mathcal{Z} \subset \mathrm{Grad} (\mathcal{X})$,
            with
            $\mathcal{S} = \mathrm{gr}^{-1} (\mathcal{Z})$,
    \end{itemize}
    such that for each
    $\lambda \in \uppi_0 (\mathrm{Grad} (\mathcal{X}))
    \simeq \uppi_0 (\mathrm{Filt} (\mathcal{X}))$,
    if we write
    $\mathcal{S}_\lambda \subset \mathcal{S}$
    and $\mathcal{Z}_\lambda \subset \mathcal{Z}$
    for the parts lying in the components
    $\mathcal{X}_\lambda^+ \subset \mathrm{Filt} (\mathcal{X})$
    and
    $\mathcal{X}_\lambda \subset \mathrm{Grad} (\mathcal{X})$,
    respectively, then:

    \begin{itemize}
        \item
            For each $\lambda$, the morphism
            $\mathrm{ev}_1 \colon \mathcal{S}_\lambda \to \mathcal{X}$
            is a locally closed immersion,
            and the family
            $(\mathcal{S}_\lambda)_\lambda$
            defines a locally closed stratification of~$\mathcal{X}$.
    \end{itemize}
    In this case,
    each~$\mathcal{S}_\lambda$ is called a \emph{stratum},
    and each~$\mathcal{Z}_\lambda$ is called the \emph{centre}
    of the stratum~$\mathcal{S}_\lambda$.
\end{para}

\begin{para}[Stability conditions]
    \label{para-linear-stack-stability}
    Let~$\mathcal{X}$ be a linear stack.
    A \emph{stability condition} on~$\mathcal{X}$ is a map
    \begin{equation*}
        \tau \colon \uppi_0 (\mathcal{X}) \setminus \{ 0 \} \longrightarrow T
    \end{equation*}
    to a totally ordered set~$T$,
    satisfying the following conditions:

    \begin{enumerate}
        \item
            If $\alpha_1, \alpha_2 \in \uppi_0 (\mathcal{X}) \setminus \{ 0 \}$
            and $\tau (\alpha_1) \leq \tau (\alpha_2)$,
            then $\tau (\alpha_1) \leq \tau (\alpha_1 + \alpha_2) \leq \tau (\alpha_2)$.

        \item
            For any class $\alpha \in \uppi_0 (\mathcal{X})$,
            the \emph{semistable locus}
            \begin{equation}
                \label{eq-def-x-ss}
                \mathcal{X}_\alpha^\mathrm{ss} (\tau) =
                \mathcal{X}_\alpha \Bigm\backslash
                \bigcup_{\substack{\alpha = \alpha_1 + \alpha_2 \\ \tau (\alpha_1) > \tau (\alpha_2)}}
                \mathrm{ev_1} (\mathcal{X}_{\alpha_1, \alpha_2}^+)
            \end{equation}
            is open in~$\mathcal{X}_\alpha$,
            where $\alpha_1, \alpha_2$ are assumed non-zero.
            Moreover, for any $t \in T$,
            the union of all $\mathcal{X}_\alpha^\mathrm{ss} (\tau)$
            with either $\alpha = 0$ or $\tau (\alpha) = t$
            is an open linear substack of~$\mathcal{X}$.

        \item
            The open substacks
            \begin{alignat*}{2}
                \mathcal{Z}_{\alpha_1, \dotsc, \alpha_n} (\tau)
                & =
                \mathcal{X}_{\alpha_1}^\mathrm{ss} (\tau) \times \cdots \times \mathcal{X}_{\alpha_n}^\mathrm{ss} (\tau)
                && \subset
                \mathcal{X}_{\alpha_1} \times \cdots \times \mathcal{X}_{\alpha_n} \ ,
                \\
                \mathcal{S}_{\alpha_1, \dotsc, \alpha_n} (\tau)
                & =
                \mathrm{gr}^{-1} (\mathcal{Z}_{\alpha_1, \dotsc, \alpha_n} (\tau))
                && \subset
                \mathcal{X}_{\alpha_1, \dotsc, \alpha_n}^+
            \end{alignat*}
            for all $n \geq 0$ and classes
            $\alpha_1, \dotsc, \alpha_n \in \uppi_0 (\mathcal{X}) \setminus \{ 0 \}$
            with $\tau (\alpha_1) > \cdots > \tau (\alpha_n)$
            define a $\Theta$-stratification of~$\mathcal{X}$
            in the sense of~\cref{para-theta-strat}.
    \end{enumerate}
    More precisely, the last condition means that for each choice of
    $\alpha_1, \dotsc, \alpha_n$ as above,
    we choose an element~$\lambda$ as in
    \cref{para-theta-strat}
    such that
    $\mathcal{X}_\lambda \simeq \mathcal{X}_{\alpha_1} \times \cdots \times \mathcal{X}_{\alpha_n}$
    and
    $\mathcal{X}_\lambda^+ \simeq \mathcal{X}_{\alpha_1, \dotsc, \alpha_n}^+$,
    and we set $\mathcal{Z}_\lambda$ and $\mathcal{S}_\lambda$ as above;
    for all other~$\lambda$, we set them to be empty.
\end{para}

\begin{para}[Examples]
    \label[example]{eg-stack-stability}
    Here are some examples of stability conditions
    on linear stacks.

    \begin{enumerate}
        \item
            \label{item-stack-stability-trivial}
            Let~$\mathcal{X}$ be any linear stack.
            The constant map $\tau \colon \uppi_0 (\mathcal{X}) \setminus \{ 0 \} \to \{ 0 \}$
            is called the \emph{trivial stability condition},
            where $\mathcal{X}_\alpha^\mathrm{ss} (\tau) = \mathcal{X}_\alpha$
            for all~$\alpha$.
        \item
            \label{item-stack-stability-quiver}
            Let~$\mathcal{X}$ be the moduli stack of representations of a quiver~$Q$.
            Then any \emph{slope function}
            $\mu \colon Q_0 \to \mathbb{Q}$
            induces a stability condition on~$\mathcal{X}$ given by
            \begin{equation*}
                \tau (d) = \frac{\sum_{i \in Q_0} d_i \cdot \mu (i)}{\sum_{i \in Q_0} d_i}
            \end{equation*}
            for non-zero dimension vectors $d \in \uppi_0 (\mathcal{X}) \setminus \{ 0 \}$,
            where the $\Theta$-stratification can be constructed from
            \textcite[Theorem~2.6.3]{ibanez-nunez-filtrations}.
            See \cref{subsec-quiver-dt}
            for more details.
        \item
            \label{item-stack-stability-gieseker}
            Let~$\mathcal{X}$ be the moduli stack of coherent sheaves
            on a projective scheme~$Y$
            over a field~$K$ of characteristic zero.
            Then \emph{Gieseker stability}
            is a stability condition on~$\mathcal{X}$,
            where the choice of~$\tau$ is described in
            \textcite[Example~4.16]{joyce-2007-configurations-iii},
            and the $\Theta$-stratification exists by
            \textcite[Example~7.28]{alper-halpern-leistner-heinloth-2023}.
    \end{enumerate}
\end{para}

\begin{para}[For self-dual linear stacks]
    \label{para-sd-linear-stack-stability}
    Let~$\mathcal{X}$ be a self-dual linear stack over~$K$,
    and let~$\tau$ be a stability condition on~$\mathcal{X}$.
    We say that~$\tau$ is \emph{self-dual},
    if the following condition holds:

    \begin{itemize}
        \item
            For any $\alpha, \beta \in \uppi_0 (\mathcal{X}) \setminus \{ 0 \}$,
            we have $\tau (\alpha) \leq \tau (\beta)$
            if and only if
            $\tau (\alpha^\vee) \geq \tau (\beta^\vee)$.
    \end{itemize}
    In this case,
    for each $\theta \in \uppi_0 (\mathcal{X}^\mathrm{sd})$,
    writing $\alpha = j (\theta)$ for the corresponding class
    in $\uppi_0 (\mathcal{X})$, we have the \emph{semistable locus}
    \begin{equation}
        \label{eq-def-x-sd-ss}
        \mathcal{X}^{\smash{\mathrm{sd}, \mathrm{ss}}}_\theta (\tau) =
        \mathcal{X}^\mathrm{ss}_\alpha (\tau)^{\mathbb{Z}_2} \cap \mathcal{X}^\mathrm{sd}_\theta
        \subset \mathcal{X}^\mathrm{sd}_\theta \ ,
    \end{equation}
    where $\mathcal{X}^\mathrm{sd}_\theta \subset (\mathcal{X}_\alpha)^{\mathbb{Z}_2}$
    as an open and closed substack.

    We have an induced
    $\Theta$-stratification of~$\mathcal{X}^\mathrm{sd}$
    given by the open substacks
    \begin{alignat*}{2}
        \mathcal{Z}^\mathrm{sd}_{\alpha_1, \dotsc, \alpha_n, \theta} (\tau)
        & =
        \mathcal{X}^\mathrm{ss}_{\alpha_1} (\tau) \times \cdots \times
        \mathcal{X}^\mathrm{ss}_{\alpha_n} (\tau) \times
        \mathcal{X}^{\smash{\mathrm{sd}, \mathrm{ss}}}_\theta (\tau)
        && \subset
        \mathcal{X}_{\alpha_1} \times \cdots \times \mathcal{X}_{\alpha_n} \times \mathcal{X}^\mathrm{sd}_\theta \ ,
        \\
        \mathcal{S}^\mathrm{sd}_{\alpha_1, \dotsc, \alpha_n, \theta} (\tau)
        & =
        \mathrm{gr}^{-1} (\mathcal{Z}^\mathrm{sd}_{\alpha_1, \dotsc, \alpha_n, \theta} (\tau))
        && \subset
        \mathcal{X}_{\alpha_1, \dotsc, \alpha_n, \theta}^{\smash{\mathrm{sd}, +}} \ ,
    \end{alignat*}
    where $\alpha_1, \dotsc, \alpha_n \in \uppi_0 (\mathcal{X}) \setminus \{ 0 \}$
    and $\theta \in \uppi_0 (\mathcal{X}^\mathrm{sd})$
    are classes such that
    $\tau (\alpha_1) > \cdots > \tau (\alpha_n) > 0$.
    These strata and centres can also be realized as
    $\mathbb{Z}_2$-fixed loci in the strata and centres of
    the $\Theta$-stratification of~$\mathcal{X}$ given by~$\tau$.
\end{para}

\begin{para}[Permissibility]
    \label{para-permissible-stability}
    Let~$\mathcal{X}$ be a linear stack over~$K$,
    and let~$\tau$ be a stability condition on~$\mathcal{X}$.
    We say that~$\tau$ is \emph{permissible},
    if the following condition holds:

    \begin{itemize}
        \item
            For any $\alpha \in \uppi_0 (\mathcal{X})$,
            the semistable locus
            $\mathcal{X}^\mathrm{ss}_\alpha (\tau) \subset \mathcal{X}_\alpha$
            is quasi-compact.
    \end{itemize}
    This is similar to the notion of
    permissible weak stability conditions in
    \textcite[Definition~4.7]{joyce-2007-configurations-iii} and
    \textcite[Definition~3.7]{joyce-song-2012}.

    We also explain in \cref{para-permissibility-comparison} below
    that this notion of permissibility
    implies the corresponding \emph{stability measures}
    being permissible as in \cite[\S4.1.4]{epsilon-ii}.
    The following lemma can be seen as
    a shadow of this result, which we prove directly.
\end{para}

\begin{lemma}
    \label{lemma-finite-decomposition}
    Let~$\mathcal{X}$ be a linear stack over~$K$,
    with quasi-compact filtrations as in \cref{para-linear-filt},
    and let~$\tau$ be a permissible stability condition on~$\mathcal{X}$.

    Then for any $\alpha \in \uppi_0 (\mathcal{X}) \setminus \{ 0 \}$,
    there are only finitely many decompositions
    $\alpha = \alpha_1 + \cdots + \alpha_n$
    into classes $\alpha_i \in \uppi_0 (\mathcal{X}) \setminus \{ 0 \}$,
    such that $\tau (\alpha_i) = \tau (\alpha)$
    and $\mathcal{X}^\mathrm{ss}_{\alpha_i} (\tau) \neq \varnothing$
    for all~$i$.
\end{lemma}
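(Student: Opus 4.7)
The plan is to reduce to the open linear substack of semistable objects of slope~$\tau(\alpha)$, on which the problem becomes one of bounded decompositions in a Noetherian stack, and then to separate the bound on the length~$n$ from the bound on the tuples of each fixed length.

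First I would set $t = \tau(\alpha)$ and take $\mathcal{Y} \subset \mathcal{X}$ to be the open linear substack furnished by condition~(ii) of \cref{para-linear-stack-stability}, namely the union of $\mathcal{X}_\beta^{\mathrm{ss}} (\tau)$ over $\beta = 0$ and over $\beta$ with $\tau(\beta) = t$. Then $\uppi_0(\mathcal{Y}) \setminus \{0\}$ is exactly the set of classes allowed in the decompositions of the lemma, so the problem reduces to bounding ordered decompositions of~$\alpha$ in this set. By permissibility, $\mathcal{Y}_\alpha = \mathcal{X}_\alpha^{\mathrm{ss}}(\tau)$ is quasi-compact and, being locally of finite type with affine diagonal, is a Noetherian algebraic stack; moreover, $\mathcal{Y}$ inherits quasi-compact filtrations from~$\mathcal{X}$.

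Next I would bound the length~$n$ using stabilizer dimensions. Since $\mathcal{Y}_\alpha$ is Noetherian with affine diagonal, the inertia $I_{\mathcal{Y}_\alpha} \to \mathcal{Y}_\alpha$ is affine and of finite type, so the dimensions of automorphism groups are bounded above by some integer~$N$. For any tuple $(\alpha_1, \dotsc, \alpha_n)$ in our set, pick, over a sufficiently large field extension, points $y_i \in \mathcal{Y}_{\alpha_i}$ and form $y = y_1 \oplus \cdots \oplus y_n \in \mathcal{Y}_\alpha$. Its automorphism group contains $\prod_i \mathrm{Aut}(y_i)$, and each factor contains a copy of~$\Gm$ via the scalar $\odot$-action on the non-zero~$y_i$. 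Therefore $n \leq \dim \mathrm{Aut}(y) \leq N$.

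For each fixed $n \leq N$, I would bound the number of length-$n$ tuples. Each such tuple yields a non-empty component $\mathcal{Y}^+_{\alpha_1, \dotsc, \alpha_n}$ of $\mathrm{Filt}(\mathcal{Y})$ via the split filtration~$\mathrm{sf}$, and $\mathrm{ev}_1 \colon \mathcal{Y}^+_{\alpha_1, \dotsc, \alpha_n} \to \mathcal{Y}_\alpha$ is quasi-compact by the quasi-compact filtrations hypothesis. I expect the main obstacle to be assembling these into a single stack $\mathcal{F}_n(\alpha)$ of $n$-step filtrations with total class~$\alpha$ and all graded pieces non-zero, for which $\mathrm{ev}_1 \colon \mathcal{F}_n(\alpha) \to \mathcal{Y}_\alpha$ is quasi-compact; quasi-compactness of the Noetherian $\mathcal{Y}_\alpha$ would then force $\mathcal{F}_n(\alpha)$ to be quasi-compact, hence to have only finitely many non-empty components. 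A workable route is induction on~$n$, reducing to the binary case, in which one analyses $\oplus \colon \mathcal{Y} \times \mathcal{Y} \to \mathcal{Y}$ via the factorisation through~$\mathrm{sf}$, which by \cref{eq-linear-moduli-stack-grad} embeds $\mathrm{Grad}(\mathcal{Y})$ component-wise as a closed substack of $\mathrm{Filt}(\mathcal{Y})$, and the quasi-compact $\mathrm{ev}_1$, combined with Noetherianity of $\mathcal{Y}_\alpha$ to pin down only finitely many pairs $(\alpha_1, \alpha_2)$ with $\alpha_1 + \alpha_2 = \alpha$ and both $\mathcal{Y}_{\alpha_i}$ non-empty.
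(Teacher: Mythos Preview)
Your reduction to the open linear substack $\mathcal{Y}=\mathcal{X}(\tau;t)$ is exactly what the paper does, and your Step~2 bound on the length~$n$ via stabilizer dimensions is a correct and pleasant argument: the map $\mathbb{G}_{\mathrm m}^{n}\to\mathrm{Aut}(y_1\oplus\cdots\oplus y_n)$ has finite kernel because any non-zero cocharacter lands, by \cref{eq-linear-moduli-stack-grad}, in a non-trivial component of $\mathrm{Grad}(\mathcal{Y})$, hence is not conjugate to the trivial one.

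The gap is in Step~3. The hypothesis of quasi-compact filtrations only tells you that each \emph{individual} $\mathrm{ev}_1\colon\mathcal{Y}^{+}_{\alpha_1,\dotsc,\alpha_n}\to\mathcal{Y}_\alpha$ is quasi-compact; deducing that the disjoint union $\mathcal{F}_n(\alpha)=\coprod_{(\alpha_i)}\mathcal{Y}^{+}_{\alpha_1,\dotsc,\alpha_n}$ is quasi-compact over $\mathcal{Y}_\alpha$ is precisely equivalent to there being only finitely many non-empty tuples, so the argument is circular. Your reduction to the binary case does not escape this: knowing that $\mathrm{sf}$ realises $\mathcal{Y}_{\alpha_1}\times\mathcal{Y}_{\alpha_2}$ inside one component $\mathcal{Y}^{+}_{\alpha_1,\alpha_2}$ still gives no control on the union over all pairs, and Noetherianity of $\mathcal{Y}_\alpha$ alone does not bound the number of components of a finite-type (but not a~priori quasi-compact) stack over it. Concretely, without the slope constraint already built into $\mathcal{Y}$, the moduli of coherent sheaves on a curve would give $\alpha=(2,0)=(1,k)+(1,-k)$ for all $k$; what rules this out on $\mathcal{Y}$ is a genuinely global finiteness statement about $\uppi_0(\mathcal{Y})$ that your argument does not supply.

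The paper handles this differently: after the same reduction to $\mathcal{Y}$, it invokes the finiteness theorem for \emph{special faces} \cite[Theorem~6.2.3]{epsilon-i}, which says that a quasi-compact algebraic stack has only finitely many special faces; translated to the linear setting, this yields finitely many ``basic'' decompositions of $\alpha$ from which all others are obtained by merging terms. That theorem is not elementary---its proof goes through \'etale-local quotient presentations and the combinatorics of cocharacter fans---and your stabilizer bound on~$n$ is essentially its rank bound, but not its full content. To rescue your approach you would need to reprove that finiteness, e.g.\ by showing that the components of $\mathrm{Grad}(\mathcal{Y})$ meeting $\mathrm{tot}^{-1}(\mathcal{Y}_\alpha)$ are, up to the Weyl-type symmetries, governed by the finitely many cocharacter lattices of stabilizers appearing in a Luna-type stratification of the Noetherian $\mathcal{Y}_\alpha$.
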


\begin{proof}
    Let $t = \tau (\alpha)$.
    Then the open substack
    \begin{equation*}
        \mathcal{X} (\tau; t) =
        \{ 0 \} \cup
        \coprod_{\substack{
            \beta \in \uppi_0 (\mathcal{X}) \setminus \{ 0 \} \mathrlap{:} \\
            \tau (\beta) = t
        }}
        \mathcal{X}^\mathrm{ss}_{\smash{\beta}} (\tau)
        \subset \mathcal{X}
    \end{equation*}
    is again a linear stack.
    Replacing~$\mathcal{X}$ by~$\mathcal{X} (\tau; t)$,
    we may assume that~$\mathcal{X}$
    has quasi-compact connected components,
    and that~$\tau$ is trivial.

    By the finiteness theorem of
    \textcite[Theorem~6.2.3]{epsilon-i},
    each connected component of~$\mathcal{X}$
    has finitely many \emph{special faces}.
    In this case, this is the statement that for any
    $\alpha \in \uppi_0 (\mathcal{X}) \setminus \{ 0 \}$,
    there are finitely many decompositions
    $\alpha = \alpha_1 + \cdots + \alpha_n$,
    such that all other decompositions can be obtained
    from combining terms in these decompositions,
    and hence the total number of decompositions is finite.
\end{proof}

\section{Rings of motives}

\subsection{Definition}
\label{subsec-motives}

\begin{para}
    We provide background material on
    \emph{rings of motives} over an algebraic stack,
    following \textcite[\S3]{epsilon-ii},
    based on the theory of \textcite{joyce-2007-stack-functions}.
    These rings will be used to construct DT invariants later.

    Recall from \cref{para-intro-conventions}
    our running assumptions on algebraic stacks.
\end{para}

\begin{para}[The ring of motives]
    \label{para-ring-of-motives}
    Let~$\mathcal{X}$ be an algebraic stack over~$K$,
    and let~$A$ be a commutative ring.
    The \emph{ring of motives} over~$\mathcal{X}$ with coefficients in~$A$
    is the $A$-module
    \begin{equation*}
        \mathbb{M} (\mathcal{X}; A) =
        \mathop{\hat{\bigoplus}}_{\mathcal{Z} \to \mathcal{X}} \ 
        A \cdot [\mathcal{Z}] \, \Big/ {\sim} \ ,
    \end{equation*}
    where we run through isomorphism classes of representable morphisms
    $\mathcal{Z} \to \mathcal{X}$ of finite type,
    with~$\mathcal{Z}$ quasi-compact,
    and $\hat{\oplus}$ indicates that we take the set of \emph{locally finite sums},
    that is, possibly infinite sums
    $\sum_{\mathcal{Z} \to \mathcal{X}} a_{\mathcal{Z}} \cdot [\mathcal{Z}]$,
    such that for each quasi-compact open substack $\mathcal{U} \subset \mathcal{X}$,
    there are only finitely many $\mathcal{Z}$ such that
    $a_{\mathcal{Z}} \neq 0$ and $\mathcal{Z} \times_{\mathcal{X}} \mathcal{U} \neq \varnothing$.
    The relation $\sim$ is generated by locally finite sums of elements of the form
    \begin{equation*}
        a \cdot ([\mathcal{Z}] - [\mathcal{Z}'] - [\mathcal{Z} \setminus \mathcal{Z}']) \ ,
    \end{equation*}
    where $a \in A$, $\mathcal{Z}$ is as above,
    and $\mathcal{Z}' \subset \mathcal{Z}$ is a closed substack.
    The class $[\mathcal{Z}] \in \mathbb{M} (\mathcal{X}; A)$
    is called the \emph{motive} of~$\mathcal{Z}$.

    For a representable morphism $\mathcal{Z} \to \mathcal{X}$ of finite type,
    where~$\mathcal{Z}$ is not necessarily quasi-compact,
    we can still define its motive $[\mathcal{Z}] \in \mathbb{M} (\mathcal{X}; A)$,
    by stratifying~$\mathcal{Z}$ into quasi-compact locally closed substacks,
    $\mathcal{Z} = \bigcup_{i \in I} \mathcal{Z}_i$,
    and defining $[\mathcal{Z}] = \sum_{i \in I} {} [\mathcal{Z}_i]$ as a locally finite sum.

    The ring structure on $\mathbb{M} (\mathcal{X}; A)$
    is given by $[\mathcal{Z}] \cdot [\mathcal{Z}'] =
    [\mathcal{Z} \times_{\mathcal{X}} \mathcal{Z}']$
    on generators,
    with unit element $[\mathcal{X}]$.

    We also write~$\mathbb{M} (\mathcal{X})$ for $\mathbb{M} (\mathcal{X}; \mathbb{Z})$,
    and~$\mathbb{M} (K; A)$ for $\mathbb{M} (\mathrm{Spec} (K); A)$.
\end{para}

\begin{para}[Properties]
    \label{para-motive-properties}
    We list some basic properties of rings of motives;
    see \cite[\S 3]{epsilon-ii} for details.

    \begin{enumerate}
        \item
            For a morphism $f \colon \mathcal{Y} \to \mathcal{X}$,
            there is a \emph{pullback map}
            \begin{equation*}
                f^* \colon \mathbb{M} (\mathcal{X}; A)
                \longrightarrow \mathbb{M} (\mathcal{Y}; A) \ ,
            \end{equation*}
            given by $[\mathcal{Z}] \mapsto [\mathcal{Z} \times_{\mathcal{X}} \mathcal{Y}]$
            on generators,
            which is a ring homomorphism.

        \item
            For a representable quasi-compact morphism
            $f \colon \mathcal{Y} \to \mathcal{X}$,
            there is a \emph{pushforward map}
            \begin{equation*}
                f_! \colon \mathbb{M} (\mathcal{Y}; A)
                \longrightarrow \mathbb{M} (\mathcal{X}; A) \ ,
            \end{equation*}
            given by $[\mathcal{Z}] \mapsto [\mathcal{Z}]$ on generators.
            This is not a ring homomorphism in general.

        \item
            For stacks $\mathcal{X}$ and $\mathcal{Y}$,
            there is an \emph{external product}
            \begin{equation*}
                \boxtimes \colon
                \mathbb{M} (\mathcal{X}; A) \otimes \mathbb{M} (\mathcal{Y}; A)
                \longrightarrow \mathbb{M} (\mathcal{X} \times \mathcal{Y}; A) \ ,
            \end{equation*}
            given by $[\mathcal{Z}] \otimes [\mathcal{Z}'] \mapsto
            [\mathcal{Z} \times \mathcal{Z}']$ on generators.
            The multiplication on~$\mathbb{M} (\mathcal{X}; A)$
            can be realized as the external product for
            $\mathcal{Y} = \mathcal{X}$
            followed by pulling back along the diagonal.

        \item
            For a representable quasi-compact morphism
            $f \colon \mathcal{Y} \to \mathcal{X}$,
            we have the \emph{projection formula}
            \begin{equation}
                \label{eq-motive-projection}
                f_! (a \cdot f^* (b)) = f_! (a) \cdot b
            \end{equation}
            for $a \in \mathbb{M} (\mathcal{Y}; A)$ and $b \in \mathbb{M} (\mathcal{X}; A)$,
            which can be verified on generators.

        \item
            For a pullback diagram
            \begin{equation*}
                \begin{tikzcd}[row sep=1.5em]
                    \mathcal{Y}' \ar[r, "{\smash[t]{g'}}"] \ar[d, "f'"']
                    \ar[dr, phantom, "\ulcorner", pos=.15]
                    & \mathcal{Y} \ar[d, "f"]
                    \\ \mathcal{X}' \ar[r, "g"]
                    & \mathcal{X} \rlap{ ,}
                \end{tikzcd}
            \end{equation*}
            where~$f$ is representable and quasi-compact,
            we have the \emph{base change formula}
            \begin{equation}
                \label{eq-motive-base-change}
                g^* \circ f_! = f'_! \circ (g')^* \colon
                \mathbb{M} (\mathcal{Y}; A) \longrightarrow \mathbb{M} (\mathcal{X}'; A) \ .
            \end{equation}

        \item
            For a generator $[\mathcal{Z}] \in \mathbb{M} (\mathcal{X}; A)$
            and a vector bundle
            $\mathcal{E} \to \mathcal{Z}$ of rank~$n$, we have
            \begin{equation}
                \label{eq-motive-vb}
                [\mathcal{E}] = \mathbb{L}^n \cdot [\mathcal{Z}] \ .
            \end{equation}
    \end{enumerate}
\end{para}

\begin{para}[Motivic integration]
    \label{para-motivic-integration}
    We also consider the localization
    \begin{equation*}
        \hat{\mathbb{M}} (\mathcal{X}; A) =
        \mathbb{M} (\mathcal{X}; A)
        \underset{A [\mathbb{L}]}{\mathbin{\hat{\otimes}}}
        A [\mathbb{L}^{\pm 1}, (\mathbb{L}^k - 1)^{-1} : k > 0] \ ,
    \end{equation*}
    where $\mathbb{L} = [\mathbb{A}^1]$
    is the motive of the affine line,
    and $\hat{\otimes}$ denotes the completed tensor product
    with respect to locally finite sums.
    We call this the \emph{completed ring of motives} over~$\mathcal{X}$.

    For a quasi-compact algebraic stack~$\mathcal{X}$ over~$K$,
    there is a \emph{motivic integration map}
    \begin{equation*}
        \int_{\mathcal{X}} {} (-) \colon
        \mathbb{M} (\mathcal{X}; A) \longrightarrow \hat{\mathbb{M}} (K; A) \ ,
    \end{equation*}
    sending a generator of the form $[Z / \mathrm{GL} (n)]$
    to the element $[Z] \cdot [\mathrm{GL} (n)]^{-1}$,
    where~$Z$ is a quasi-compact algebraic space over~$K$.
    Such elements $[Z / \mathrm{GL} (n)]$ generate~$\mathbb{M} (\mathcal{X}; A)$.
    See \cite[\S3]{epsilon-ii} for details.
\end{para}

\begin{para}[Euler characteristics]
    \label{para-euler-characteristic}

    Let
    \begin{equation*}
        \hat{\mathbb{M}}^\mathrm{reg} (K; A) =
        \mathbb{M} (K; A)
        \underset{A [\mathbb{L}]}{\otimes}
        A [\mathbb{L}^{\pm 1}, (1 + \mathbb{L} + \cdots + \mathbb{L}^{k - 1})^{-1} : k > 0]
        \Big/ (\mathbb{L} - 1) \textnormal{-torsion}
    \end{equation*}
    be the subring of~$\hat{\mathbb{M}} (K; A)$
    consisting of motives `with no poles at $\mathbb{L} = 1$',
    and suppose that~$A$ is a $\mathbb{Q}$-algebra.
    Then there is an Euler characteristic map
    $\chi \colon \hat{\mathbb{M}}^\mathrm{reg} (K; A) \to A$,
    sending~$\mathbb{L}$ to~$1$,
    and $(1 + \mathbb{L} + \cdots + \mathbb{L}^{k - 1})^{-1}$ to~$1 / k$.
    See \cite[\S3]{epsilon-ii} for details.
\end{para}

\begin{para}[The virtual rank decomposition]
    \label{para-virtual-rank}
    Let~$\mathcal{X}$ be a stack over~$K$,
    and let~$A$ be a commutative $\mathbb{Q}$-algebra.
    As in \textcite[\S5]{joyce-2007-stack-functions}
    and \textcite[\S5.1]{epsilon-ii},
    there is a \emph{virtual rank decomposition}
    \begin{equation*}
        \mathbb{M} (\mathcal{X}; A) =
        \mathop{\hat{\bigoplus}}_{k \geq 0} {}
        \mathbb{M}^{(k)} (\mathcal{X}; A) \ ,
    \end{equation*}
    where $\hat{\oplus}$ means taking locally finite sums
    as in \cref{para-ring-of-motives},
    and each $\mathbb{M}^{(k)} (\mathcal{X}; A) \subset \mathbb{M} (\mathcal{X}; A)$
    is the submodule of motives of \emph{pure virtual rank~$k$}.
    Roughly speaking, having virtual rank~$k$
    means having a pole of order at most~$k$ at~$\mathbb{L} = 1$
    after motivic integration.
    We omit the precise definition here,
    which is complicated.

    When~$\mathcal{X}$ is quasi-compact,
    the image of the map
    $\int_{\mathcal{X}} {} (-) \colon
    \mathbb{M}^{(\leq k)} (\mathcal{X}; A) \to \hat{\mathbb{M}} (K; A)$
    lies in the subspace
    $(\mathbb{L} - 1)^{-k} \cdot \hat{\mathbb{M}}^\mathrm{reg} (K; A)
    \subset \hat{\mathbb{M}} (K; A)$,
    where $\mathbb{M}^{(\leq k)} =
    \mathbb{M}^{(0)} \oplus \cdots \oplus \mathbb{M}^{(k)}$.
    In particular, there is an Euler characteristic integration map
    \begin{equation*}
        \int_{\mathcal{X}} {} (\mathbb{L} - 1)^k \cdot (-) \, d \chi
        = \chi \circ \int_{\mathcal{X}} {} (\mathbb{L} - 1)^k \cdot (-)
        \colon \
        \mathbb{M}^{(\leq k)} (\mathcal{X}; A) \longrightarrow A \ .
    \end{equation*}
\end{para}

\subsection{Motivic Hall algebras and modules}
\label{subsec-hall}

\begin{para}
    We introduce the \emph{motivic Hall algebra}
    for a linear stack,
    originally defined by \textcite{joyce-2007-configurations-ii},
    which is an associative algebra structure
    on the ring of motives~$\mathbb{M} (\mathcal{X})$.

    For self-dual linear stacks,
    we show that the ring of motives~$\mathbb{M} (\mathcal{X}^\mathrm{sd})$
    is a module for the motivic Hall algebra~$\mathbb{M} (\mathcal{X})$,
    which we call the \emph{motivic Hall module}.

    Hall modules have been constructed and studied
    for other flavours of Hall algebras, such as by
    \textcite{young-2015-self-dual,young-2016-hall-module,young-2020-quiver}
    in the context of Ringel's \cite{ringel-1990-hall-1,ringel-1990-hall-2}
    Hall algebras and that of cohomological Hall algebras.
    A similar construction in the context of Joyce's
    \cite{joyce-hall,joyce-homological}
    vertex algebras is obtained by
    \textcite{bu-self-dual-ii}.
    Another closely related work is
    \textcite{dehority-latyntsev},
    who studied the relation between the cohomological version
    and the vertex algebra version.
\end{para}

\begin{para}[The motivic Hall algebra]
    \label{para-hall-alg}
    Let~$\mathcal{X}$ be a linear stack over~$K$,
    with quasi-compact filtrations as in \cref{para-linear-filt}.
    Define an operation
    \begin{equation*}
        {*} \colon
        \mathbb{M} (\mathcal{X}) \otimes \mathbb{M} (\mathcal{X})
        \longrightarrow \mathbb{M} (\mathcal{X})
    \end{equation*}
    by the composition
    \begin{equation*}
        \mathbb{M} (\mathcal{X}) \otimes \mathbb{M} (\mathcal{X})
        \overset{\boxtimes}{\longrightarrow}
        \mathbb{M} (\mathcal{X} \times \mathcal{X})
        \overset{\mathrm{gr}^*}{\longrightarrow}
        \mathbb{M} (\mathcal{X}^+)
        \overset{(\mathrm{ev}_1)_!}{\longrightarrow}
        \mathbb{M} (\mathcal{X}) \ ,
    \end{equation*}
    where $\mathcal{X}^+$ denotes the disjoint union of
    the stacks of filtrations $\mathcal{X}_{\alpha_1, \alpha_2}^+$
    for all $\alpha_1, \alpha_2 \in \uppi_0 (\mathcal{X})$.

    Roughly speaking, for motives $a, b \in \mathbb{M} (\mathcal{X})$,
    the product $a * b \in \mathbb{M} (\mathcal{X})$
    parametrizes all possible extensions
    of objects parametrized by~$a$ and~$b$, respectively.

    We will see in \cref{thm-hall-assoc}
    that the product~$*$ is associative,
    and that it has a unit element~$[\{ 0 \}] \in \mathbb{M} (\mathcal{X})$,
    which is the motive of the component $\{ 0 \} \subset \mathcal{X}$.
    This defines an associative algebra structure on~$\mathbb{M} (\mathcal{X})$,
    called the \emph{motivic Hall algebra} of~$\mathcal{X}$.
\end{para}

\begin{para}[The motivic Hall module]
    \label{para-hall-mod}
    Now, let~$\mathcal{X}$ be a self-dual linear stack over~$K$,
    with quasi-compact filtrations as in \cref{para-linear-filt}.
    Define an operation
    \begin{equation*}
        \diamond \colon
        \mathbb{M} (\mathcal{X}) \otimes \mathbb{M} (\mathcal{X}^\mathrm{sd})
        \longrightarrow \mathbb{M} (\mathcal{X}^\mathrm{sd})
    \end{equation*}
    by the composition
    \begin{equation*}
        \mathbb{M} (\mathcal{X}) \otimes \mathbb{M} (\mathcal{X}^\mathrm{sd})
        \overset{\boxtimes}{\longrightarrow}
        \mathbb{M} (\mathcal{X} \times \mathcal{X}^\mathrm{sd})
        \overset{\mathrm{gr}^*}{\longrightarrow}
        \mathbb{M} (\mathcal{X}^{\mathrm{sd}, +})
        \overset{(\mathrm{ev}_1)_!}{\longrightarrow}
        \mathbb{M} (\mathcal{X}^\mathrm{sd}) \ ,
    \end{equation*}
    where $\mathcal{X}^{\mathrm{sd}, +}$ denotes the disjoint union of
    the stacks of filtrations $\mathcal{X}_{\alpha, \theta}^+$
    for all $\alpha \in \uppi_0 (\mathcal{X})$
    and $\theta \in \uppi_0 (\mathcal{X}^\mathrm{sd})$.

    Again, roughly speaking, for motives $a \in \mathbb{M} (\mathcal{X})$
    and $b \in \mathbb{M} (\mathcal{X}^\mathrm{sd})$,
    the product $a \diamond b \in \mathbb{M} (\mathcal{X}^\mathrm{sd})$
    parametrizes the total objects of all possible three-step self-dual filtrations,
    as in \cref{para-sd-filt},
    whose graded pieces are parametrized by~$a$, $b$, and~$a^\vee$, respectively,

    We will prove in \cref{thm-hall-assoc}
    that the product~$\diamond$ establishes~$\mathbb{M} (\mathcal{X}^\mathrm{sd})$
    as a left module for the motivic Hall algebra~$\mathbb{M} (\mathcal{X})$.
    This is called the \emph{motivic Hall module} of~$\mathcal{X}$.
\end{para}

\begin{theorem}
    \label{thm-hall-assoc}
    Let~$\mathcal{X}$ be a linear stack over~$K$,
    with quasi-compact filtrations.

    \begin{enumerate}
        \item
            \label{item-hall-assoc-alg}
            Recall the operation~$*$ defined in \cref{para-hall-alg}.
            Then for any $a, b, c \in \mathbb{M} (\mathcal{X})$, we have
            \begin{gather}
                \label{eq-hall-alg-unit}
                [\{ 0 \}] * a = a = a * [\{ 0 \}] \ ,
                \\
                \label{eq-hall-alg-assoc}
                (a * b) * c = a * (b * c) \ ,
            \end{gather}
            where $[\{ 0 \}] \in \mathbb{M} (\mathcal{X})$
            is the motive of the component $\{ 0 \} \subset \mathcal{X}$.

        \item
            \label{item-hall-assoc-mod}
            Suppose that~$\mathcal{X}$ is equipped with a self-dual structure.
            Consider the involution~$(-)^\vee$ on~$\mathbb{M} (\mathcal{X})$
            induced by the involution of~$\mathcal{X}$,
            and the operation~$\diamond$ defined in \cref{para-hall-mod}.
            Then for any $a, b \in \mathbb{M} (\mathcal{X})$
            and $c \in \mathbb{M} (\mathcal{X}^\mathrm{sd})$,
            we have
            \begin{align}
                \label{eq-hall-alg-invol}
                a^\vee * b^\vee & = (b * a)^\vee \ ,
                \\
                \label{eq-hall-mod-unit}
                [\{ 0 \}] \diamond c & = c \ ,
                \\
                \label{eq-hall-mod-assoc}
                a \diamond (b \diamond c) & = (a * b) \diamond c \ .
            \end{align}
    \end{enumerate}
\end{theorem}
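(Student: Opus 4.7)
The plan is to reduce all five identities to base-change arguments, in the spirit of Joyce's original proof of associativity of the motivic Hall algebra. The key device will be a ``stack of three-step filtrations'' $\mathcal{X}^{++}$ (and its self-dual counterpart $\mathcal{X}^{\mathrm{sd},++}$), defined as an appropriate component of $\mathrm{Map}((\mathbb{A}^1/\Gm)^2, \mathcal{X})$, with the associativity statements obtained by showing that each side equals a common motive pushed forward from this stack.

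For the unit identities \cref{eq-hall-alg-unit} and \cref{eq-hall-mod-unit}: under \cref{eq-linear-moduli-stack-grad}, the components $\mathcal{X}^+_{0, \alpha}$, $\mathcal{X}^+_{\alpha, 0}$, and $\mathcal{X}^{\mathrm{sd},+}_{0, \theta}$ correspond to maps $\gamma$ supported at a single point, so the morphisms $\mathrm{gr}$ and $\mathrm{ev}_1$ restrict to isomorphisms with $\mathcal{X}_\alpha$ or $\mathcal{X}^\mathrm{sd}_\theta$. Unwinding the definitions of $*$ and $\diamond$ then gives the three unit identities directly.

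For the associativity identities \cref{eq-hall-alg-assoc} and \cref{eq-hall-mod-assoc}, the core geometric input is the decomposability of a three-step filtration $0 \subset x_1 \subset x_2 \subset x_3$ in two ways --- either as a filtration of $x_3$ with subobject $x_2$ (itself filtered by $x_1$), or as a filtration of $x_3$ with subobject $x_1$ (whose quotient is filtered by $x_2/x_1$). This yields two pullback squares
\[
\begin{tikzcd}[column sep=small]
\mathcal{X}^{++}_{\alpha_1, \alpha_2, \alpha_3} \ar[r] \ar[d] & \mathcal{X}^+_{\alpha_1, \alpha_2} \ar[d, "\mathrm{ev}_1"] \\
\mathcal{X}^+_{\alpha_1 + \alpha_2, \alpha_3} \ar[r] & \mathcal{X}_{\alpha_1 + \alpha_2}
\end{tikzcd}
\qquad \text{and} \qquad
\begin{tikzcd}[column sep=small]
\mathcal{X}^{++}_{\alpha_1, \alpha_2, \alpha_3} \ar[r] \ar[d] & \mathcal{X}^+_{\alpha_2, \alpha_3} \ar[d, "\mathrm{ev}_1"] \\
\mathcal{X}^+_{\alpha_1, \alpha_2 + \alpha_3} \ar[r] & \mathcal{X}_{\alpha_2 + \alpha_3} \rlap{ .}
\end{tikzcd}
\]
Applying base change \cref{eq-motive-base-change} to these, both $(a*b)*c$ and $a*(b*c)$ reduce to $(\mathrm{ev}_1)_! \mathrm{gr}_3^*(a \boxtimes b \boxtimes c)$, where $\mathrm{gr}_3 \colon \mathcal{X}^{++} \to \mathcal{X}^3$ records the three graded pieces. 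The module associativity \cref{eq-hall-mod-assoc} follows by the identical argument with $\mathcal{X}^{\mathrm{sd},++}$ in place of $\mathcal{X}^{++}$, parametrizing self-dual filtrations whose positive half carries a further two-step refinement.

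For the anti-involution identity \cref{eq-hall-alg-invol}, the self-dual structure induces a compatible involution $\tilde{\iota}$ on $\mathrm{Filt}(\mathcal{X})$ which, by \cref{para-sd-filt}, sends $\mathcal{X}^+_{\alpha_1, \alpha_2}$ to $\mathcal{X}^+_{\alpha_2^\vee, \alpha_1^\vee}$ --- reflecting the order reversal when dualising an exact sequence. The compatibilities $\mathrm{ev}_1 \circ \tilde{\iota} = (-)^\vee \circ \mathrm{ev}_1$ and $\mathrm{gr} \circ \tilde{\iota} = ((-)^\vee \times (-)^\vee) \circ \mathrm{sw} \circ \mathrm{gr}$, where $\mathrm{sw}$ is the swap, then yield $(b*a)^\vee = a^\vee * b^\vee$ by one further base-change application. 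The main obstacle will be constructing $\mathcal{X}^{++}$ and $\mathcal{X}^{\mathrm{sd},++}$ on components rigorously and verifying the two pullback squares above, which requires unpacking the combinatorial indexing in \cref{eq-linear-moduli-stack-grad,eq-linear-moduli-stack-sd-grad}; once this is in place, the quasi-compact filtrations assumption ensures that all pushforwards $(\mathrm{ev}_1)_!$ are defined, and the remaining manipulations are routine applications of the formulas in \cref{para-motive-properties}.
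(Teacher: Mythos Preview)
Your proposal is correct and follows essentially the same strategy as the paper: base change through a stack of three-step (self-dual) filtrations for the associativity identities, isomorphisms $\mathcal{X}^+_{0,\alpha} \simeq \mathcal{X}_\alpha$ etc.\ for the units, and compatibility of the induced involution on $\mathrm{Filt}(\mathcal{X})$ with $\mathrm{gr}$ and $\mathrm{ev}_1$ for the anti-involution identity. The only cosmetic difference is that the paper realizes the three-step filtration stack as the component $\mathcal{X}^+_{\alpha_1,\alpha_2,\alpha_3} \subset \mathrm{Filt}(\mathcal{X})$ already available from \cref{para-linear-filt}, rather than as a component of $\mathrm{Map}((\mathbb{A}^1/\Gm)^2,\mathcal{X})$, and cites the associativity theorem of \cite[\S6.3]{epsilon-i} for the pullback squares you identify as the main obstacle.
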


\begin{proof}
    For~\cref{eq-hall-alg-unit},
    it is enough to show that for any
    $\alpha \in \uppi_0 (\mathcal{X})$,
    the morphisms
    $\mathcal{X}_{0, \alpha}^+ \to \mathcal{X}_{\alpha}$
    and $\mathcal{X}_{\alpha, 0}^+ \to \mathcal{X}_{\alpha}$
    are isomorphisms,
    which follows from the descriptions in \cref{para-linear-filt}.

    For~\cref{eq-hall-alg-assoc},
    we may assume that $a \in \mathbb{M} (\mathcal{X}_{\alpha_1})$,
    $b \in \mathbb{M} (\mathcal{X}_{\alpha_2})$,
    and $c \in \mathbb{M} (\mathcal{X}_{\alpha_3})$,
    for some $\alpha_1, \alpha_2, \alpha_3 \in \uppi_0 (\mathcal{X})$.
    Applying the base change formula~\cref{eq-motive-base-change}
    to the pullback squares in the diagrams
    \begin{equation}
        \label{eq-cd-hall-assoc}
        \hspace{-1em}
        \begin{tikzcd}[column sep={3em,between origins}, row sep={3em,between origins}]
            &&
            \smash[t]{\mathcal{X}_{\alpha_1, \alpha_2, \alpha_3}^+}
            \ar[dl] \ar[dr]
            \ar[dd, phantom, "\ulcorner" {pos=.15, rotate=-45, shift={(-.08em,0)}}]
            &&
            \\
            & \mathcal{X}_{\alpha_1, \alpha_2}^+ \times \mathcal{X}_{\alpha_3}
            \ar[dl] \ar[dr]
            && \mathcal{X}_{\alpha_1 + \alpha_2, \alpha_3}^+
            \ar[dl] \ar[dr]
            \\
            \mathcal{X}_{\alpha_1} {\times} \mathcal{X}_{\alpha_2} {\times} \mathcal{X}_{\alpha_3}
            && \mathcal{X}_{\alpha_1 + \alpha_2} {\times} \mathcal{X}_{\alpha_3}
            && \mathcal{X}_{\alpha_1 + \alpha_2 + \alpha_3}
            \rlap{ ,}
        \end{tikzcd}
        \quad
        \begin{tikzcd}[column sep={3em,between origins}, row sep={3em,between origins}]
            &&
            \smash[t]{\mathcal{X}_{\alpha_1, \alpha_2, \alpha_3}^+}
            \ar[dl] \ar[dr]
            \ar[dd, phantom, "\ulcorner" {pos=.15, rotate=-45, shift={(-.08em,0)}}]
            &&
            \\
            & \mathcal{X}_{\alpha_1} \times \mathcal{X}_{\alpha_2, \alpha_3}^+
            \ar[dl] \ar[dr]
            && \mathcal{X}_{\alpha_1, \alpha_2 + \alpha_3}^+
            \ar[dl] \ar[dr]
            \\
            \mathcal{X}_{\alpha_1} {\times} \mathcal{X}_{\alpha_2} {\times} \mathcal{X}_{\alpha_3}
            && \mathcal{X}_{\alpha_1} {\times} \mathcal{X}_{\alpha_2 + \alpha_3}
            && \mathcal{X}_{\alpha_1 + \alpha_2 + \alpha_3}
            \rlap{ ,}
        \end{tikzcd}
        \hspace{-3em}
    \end{equation}
    we see that both sides of~\cref{eq-hall-alg-assoc}
    are equal to
    $(\mathrm{ev}_1)_! \circ \mathrm{gr}^* (a \boxtimes b \boxtimes c)$,
    where $\mathrm{gr}$ and $\mathrm{ev}_1$
    are the outer compositions in both diagrams in~\cref{eq-cd-hall-assoc}.
    These diagrams are special cases of the
    \emph{associativity theorem}
    of \textcite[\S6.3]{epsilon-i},
    as explained in \cite[\S7.1.7]{epsilon-i}.

    The relation~\cref{eq-hall-alg-invol}
    follows from the commutativity of the diagram
    \begin{equation}
        \begin{tikzcd}[column sep={7em,between origins}, row sep={3.5em,between origins}]
            \mathcal{X}_{\alpha_1} \times \mathcal{X}_{\alpha_2}
            \ar[d, "(-)^\vee"', "\simeq"]
            & \mathcal{X}_{\alpha_1, \alpha_2}^+
            \ar[d, "(-)^\vee"', "\simeq"] \ar[l, "\mathrm{gr}"'] \ar[r, "\mathrm{ev}_1"]
            & \mathcal{X}_{\alpha_1 + \alpha_2}
            \ar[d, "(-)^\vee"', "\simeq"]
            \\ \mathcal{X}_{\alpha_2^\vee} \times \mathcal{X}_{\alpha_1^\vee}
            & \mathcal{X}_{\alpha_2^\vee, \alpha_1^\vee}^+
            \ar[l, "\mathrm{gr}"'] \ar[r, "\mathrm{ev}_1"]
            & \mathcal{X}_{\alpha_2^\vee + \alpha_1^\vee}
            \rlap{ ,}
        \end{tikzcd}
    \end{equation}
    where $\alpha_1, \alpha_2 \in \uppi_0 (\mathcal{X})$,
    and the middle vertical isomorphism is given by
    the $\mathbb{Z}_2$-action on $\mathrm{Filt} (\mathcal{X})$.

    The relation~\cref{eq-hall-mod-unit}
    follows from the isomorphism
    $\mathcal{X}_{0, \theta}^{\smash{\mathrm{sd}, +}} \simto \mathcal{X}_{\theta}^\mathrm{sd}$
    for $\theta \in \uppi_0 (\mathcal{X}^\mathrm{sd})$.

    For~\cref{eq-hall-mod-assoc},
    we have similar diagrams
    \begin{equation}
        \label{eq-cd-hall-mod-assoc}
        \hspace{-1em}
        \adjustbox{scale=0.95}{$
            \begin{tikzcd}[column sep={3em,between origins}, row sep={3em,between origins}]
                &&
                \mathcal{X}_{\alpha_1, \alpha_2, \theta}^{\smash{\mathrm{sd}, +}}
                \ar[dl] \ar[dr]
                \ar[dd, phantom, "\ulcorner" {pos=.15, rotate=-45, shift={(-.08em,0)}}]
                &&
                \\
                & \mathcal{X}_{\alpha_1, \alpha_2}^+ \times \mathcal{X}_\theta^\mathrm{sd}
                \ar[dl] \ar[dr]
                && \mathcal{X}_{\alpha_1 + \alpha_2, \theta}^{\smash[b]{\mathrm{sd}, +}}
                \ar[dl] \ar[dr]
                \\
                \mathcal{X}_{\alpha_1} {\times} \mathcal{X}_{\alpha_2} {\times} \mathcal{X}_\theta^\mathrm{sd}
                && \mathcal{X}_{\alpha_1 + \alpha_2} {\times} \mathcal{X}_\theta^\mathrm{sd}
                && \mathcal{X}_{\alpha_1 + \alpha_2 + \theta + \smash{\alpha_2^\vee + \alpha_1^\vee}}^{\smash[b]{\mathrm{sd}, +}}
                \rlap{ ,}
            \end{tikzcd}
            \enspace
            \begin{tikzcd}[column sep={3.2em,between origins}, row sep={3em,between origins}]
                &&
                \mathcal{X}_{\alpha_1, \alpha_2, \theta}^{\smash{\mathrm{sd}, +}}
                \ar[dl] \ar[dr]
                \ar[dd, phantom, "\ulcorner" {pos=.15, rotate=-45, shift={(-.08em,0)}}]
                &&
                \\
                & \mathcal{X}_{\alpha_1} \times \mathcal{X}_{\alpha_2, \theta}^{\smash[b]{\mathrm{sd}, +}}
                \ar[dl] \ar[dr]
                && \mathcal{X}_{\alpha_1, \alpha_2 + \theta + \smash{\alpha_2^\vee}}^{\smash[b]{\mathrm{sd}, +}}
                \ar[dl] \ar[dr]
                \\
                \mathcal{X}_{\alpha_1} {\times} \mathcal{X}_{\alpha_2} {\times} \mathcal{X}_\theta^\mathrm{sd}
                && \mathcal{X}_{\alpha_1} {\times} \mathcal{X}_{\alpha_2 + \theta + \smash{\alpha_2^\vee}}^\mathrm{sd}
                && \mathcal{X}_{\alpha_1 + \alpha_2 + \theta + \smash{\alpha_2^\vee + \alpha_1^\vee}}^{\smash[b]{\mathrm{sd}, +}}
                \rlap{ ,}
            \end{tikzcd}
        $}
        \hspace{-3em}
    \end{equation}
    where the pullback squares follow from the associativity theorem
    of \textcite[\S6.3]{epsilon-i}.
    Alternatively, these diagrams can be obtained
    by taking $\mathbb{Z}_2$-fixed loci in pullback diagrams analogous
    to~\cref{eq-cd-hall-assoc} for $5$-step filtrations.
    The relation~\cref{eq-hall-mod-assoc} then follows
    from applying the base change formula~\cref{eq-motive-base-change}
    to these diagrams.
\end{proof}

\section{Invariants}

\label{sec-inv}

In this section, we present the definition of
\emph{orthosymplectic DT invariants},
as a special case of the
\emph{intrinsic DT invariants} of
\textcite{epsilon-ii},
which is the main construction of this paper.
The new input here, compared to the cited work,
is the choice of coefficients,
or \emph{stability measures} in the sense of \cite{epsilon-ii},
when defining the epsilon motives in the orthosymplectic setting.
We also use the motivic Hall algebra and module
to make the construction more explicit in our setting.

\subsection{Epsilon motives}
\label{subsec-epsilon}

\begin{para}
    We define \emph{epsilon motives}
    for linear and self-dual linear stacks,
    following \textcite{joyce-2008-configurations-iv} in the linear case
    and the construction of \textcite{epsilon-ii}
    for general algebraic stacks.
    These are elements of the rings of motives
    $\mathbb{M} (\mathcal{X}; \mathbb{Q})$ and
    $\mathbb{M} (\mathcal{X}^\mathrm{sd}; \mathbb{Q})$,
    depending on a stability condition~$\tau$,
    and are obtained from motives of semistable loci,
    $[\mathcal{X}^\mathrm{ss}_\alpha (\tau)]$ and
    $[\mathcal{X}^{\smash{\mathrm{sd,ss}}}_\theta (\tau)]$,
    by removing certain parts of the strictly semistable locus.
    The purpose of doing this step is so that the
    \emph{no-pole theorem}, \cref{thm-no-pole},
    holds, allowing us to take the Euler characteristics of epsilon motives,
    which will then be used to define DT invariants.

    Throughout, we assume that~$\mathcal{X}$ is a linear stack over~$K$,
    with quasi-compact filtrations as in \cref{para-linear-filt}.
\end{para}

\begin{para}[The linear case]
    \label{para-epsilon-linear}
    Let~$\tau$ be a permissible stability condition on~$\mathcal{X}$.
    Following~\textcite{joyce-2008-configurations-iv},
    for each class~$\alpha \in \uppi_0 (\mathcal{X}) \setminus \{ 0 \}$,
    define the \emph{epsilon motive}
    $\epsilon_\alpha (\tau) \in \mathbb{M} (\mathcal{X}_\alpha; \mathbb{Q})$
    by the formula
    \begin{equation}
        \label{eq-def-epsilon-linear}
        \epsilon_\alpha (\tau) =
        \sum_{ \leftsubstack{
            \\[-4ex]
            & n > 0; \,
            \alpha_1, \dotsc, \alpha_n \in
            \uppi_0 (\mathcal{X}) \setminus \{ 0 \} \colon \\[-2ex]
            & \alpha = \alpha_1 + \cdots + \alpha_n, \\[-2ex]
            & \tau (\alpha_1) = \cdots = \tau (\alpha_n)
        }}
        \frac{(-1)^{n-1}}{n} \cdot
        [\mathcal{X}^\mathrm{ss}_{\alpha_1} (\tau)] *
        \cdots *
        [\mathcal{X}^\mathrm{ss}_{\alpha_n} (\tau)] \ ,
    \end{equation}
    where~$*$ denotes multiplication in the motivic Hall algebra
    $\mathbb{M} (\mathcal{X}; \mathbb{Q})$.
    By \cref{lemma-finite-decomposition},
    only finitely many terms in the sum are non-zero.
    Note that $\epsilon_\alpha (\tau)$ is supported on
    $\mathcal{X}^\mathrm{ss}_\alpha (\tau)$.

    Formally inverting the formula~\cref{eq-def-epsilon-linear},
    we obtain the relation
    \begin{equation}
        \label{eq-def-epsilon-linear-inv}
        [\mathcal{X}^\mathrm{ss}_\alpha (\tau)] =
        \sum_{ \leftsubstack{
            \\[-4ex]
            & n > 0; \,
            \alpha_1, \dotsc, \alpha_n \in
            \uppi_0 (\mathcal{X}) \setminus \{ 0 \} \colon \\[-2ex]
            & \alpha = \alpha_1 + \cdots + \alpha_n, \\[-2ex]
            & \tau (\alpha_1) = \cdots = \tau (\alpha_n)
        }}
        \frac{1}{n!} \cdot
        \epsilon_{\alpha_1} (\tau) *
        \cdots *
        \epsilon_{\alpha_n} (\tau) \ .
    \end{equation}
    The relation between the coefficients~$(-1)^{n-1} / n$ and~$1 / n!$
    are explained in \cref{para-epsilon-coefficients} below.

    One can also combine~\cref{eq-def-epsilon-linear-inv}
    with the relation
    \begin{equation}
        [\mathcal{X}_\alpha] =
        \sum_{ \leftsubstack{
            & n > 0; \,
            \alpha_1, \dotsc, \alpha_n \in
            \uppi_0 (\mathcal{X}) \setminus \{ 0 \} \colon \\[-2ex]
            & \alpha = \alpha_1 + \cdots + \alpha_n, \\[-2ex]
            & \tau (\alpha_1) > \cdots > \tau (\alpha_n)
        }}
        [\mathcal{X}^\mathrm{ss}_{\alpha_1} (\tau)] *
        \cdots *
        [\mathcal{X}^\mathrm{ss}_{\alpha_n} (\tau)] \ ,
    \end{equation}
    which comes from the $\Theta$-stratification of~$\mathcal{X}$,
    and can be an infinite but locally finite sum,
    giving the formula
    \begin{equation}
        \label{eq-def-epsilon-linear-inv-2}
        [\mathcal{X}_\alpha] =
        \sum_{ \leftsubstack{
            \\[-3ex]
            & n > 0; \,
            \alpha_1, \dotsc, \alpha_n \in
            \uppi_0 (\mathcal{X}) \setminus \{ 0 \} \colon \\[-2ex]
            & \alpha = \alpha_1 + \cdots + \alpha_n, \\[-2ex]
            & \tau (\alpha_1) \geq \cdots \geq \tau (\alpha_n)
        }}
        \frac{1}{|W_{\alpha_1, \dotsc, \alpha_n} (\tau)|} \cdot
        \epsilon_{\alpha_1} (\tau) *
        \cdots *
        \epsilon_{\alpha_n} (\tau) \ ,
    \end{equation}
    where~$W_{\alpha_1, \dotsc, \alpha_n} (\tau)$
    denotes the group of permutations~$\sigma$ of~$\{ 1, \dotsc, n \}$
    such that~$\tau (\alpha_{\sigma (1)}) \geq \cdots \geq \tau (\alpha_{\sigma (n)})$.
    This can be taken as an alternative definition
    of the invariants~$\epsilon_\alpha (\tau)$,
    that is, they are the unique set of motives
    such that~\cref{eq-def-epsilon-linear-inv-2} holds for all~$\alpha$.

    One can interpret~\cref{eq-def-epsilon-linear-inv-2}
    as considering a generalized version of HN~filtrations,
    where the slopes of the quotients are non-increasing
    rather than strictly decreasing,
    and the sum is averaged over all possible orderings
    satisfying the non-increasing condition.

    The motive $\epsilon_\alpha (\tau)$
    agrees with the motive denoted by
    $\epsilon^{(1)}_{\mathcal{X}_\alpha} (\mu_\tau)$
    in \cite[\S5.2]{epsilon-ii},
    where~$\mu_\tau$ is the \emph{stability measure}
    associated to the stability condition~$\tau$,
    as explained in \cite[Example~4.1.7]{epsilon-ii}.
\end{para}

\begin{para}[The self-dual case]
    \label{para-epsilon-sd}
    Suppose that~$\mathcal{X}$ is equipped with a self-dual structure,
    and let~$\tau$ be a permissible self-dual stability condition on~$\mathcal{X}$.

    For each class $\theta \in \uppi_0 (\mathcal{X}^\mathrm{sd})$,
    define the \emph{epsilon motive}
    $\epsilon^\mathrm{sd}_\theta (\tau) \in
    \mathbb{M} (\mathcal{X}^\mathrm{sd}_\theta; \mathbb{Q})$
    by the formula
    \begin{equation}
        \label{eq-def-epsilon-sd}
        \epsilon^\mathrm{sd}_\theta (\tau) =
        \sum_{ \leftsubstack{
            \\[-3.5ex]
            & n \geq 0; \,
            \alpha_1, \dotsc, \alpha_n \in
            \uppi_0 (\mathcal{X}) \setminus \{ 0 \}, \,
            \rho \in \uppi_0 (\mathcal{X}^\mathrm{sd}) \colon \\[-2ex]
            & \theta = \alpha_1 + \alpha_1^\vee + \cdots +
            \alpha_n + \alpha_n^\vee + \rho, \\[-2ex]
            & \tau (\alpha_1) = \cdots = \tau (\alpha_n) = 0
        }}
        \binom{-1/2}{n} \cdot
        [\mathcal{X}^\mathrm{ss}_{\alpha_1} (\tau)] \diamond
        \cdots \diamond
        [\mathcal{X}^\mathrm{ss}_{\alpha_n} (\tau)] \diamond
        [\mathcal{X}^{\mathrm{sd}, \mathrm{ss}}_\rho (\tau)] \ ,
    \end{equation}
    where~$\diamond$ denotes the multiplication for the motivic Hall module,
    the notation $\alpha_i + \alpha_i^\vee$ is from
    \cref{para-sd-linear-moduli-stacks},
    and~$\binom{-1/2}{n}$ is the binomial coefficient.
    The sum only contains finitely many non-zero terms,
    and~$\epsilon^\mathrm{sd}_\theta (\tau)$
    is supported on the semistable locus
    $\mathcal{X}^{\mathrm{sd}, \mathrm{ss}}_\theta (\tau)
    \subset \mathcal{X}^\mathrm{sd}_\theta$.

    Formally inverting the formula~\cref{eq-def-epsilon-sd},
    we obtain the relation
    \begin{equation}
        \label{eq-def-epsilon-sd-inv}
        [\mathcal{X}^{\mathrm{sd}, \mathrm{ss}}_\theta (\tau)] =
        \sum_{ \leftsubstack{
            \\[-3.5ex]
            & n \geq 0; \,
            \alpha_1, \dotsc, \alpha_n \in
            \uppi_0 (\mathcal{X}) \setminus \{ 0 \}, \,
            \rho \in \uppi_0 (\mathcal{X}^\mathrm{sd}) \colon \\[-2ex]
            & \theta = \alpha_1 + \alpha_1^\vee + \cdots +
            \alpha_n + \alpha_n^\vee + \rho, \\[-2ex]
            & \tau (\alpha_1) = \cdots = \tau (\alpha_n) = 0
        }}
        \frac{1}{2^n \, n!} \cdot
        \epsilon_{\alpha_1} (\tau) \diamond
        \cdots \diamond
        \epsilon_{\alpha_n} (\tau) \diamond
        \epsilon^\mathrm{sd}_\rho (\tau) \ ,
    \end{equation}
    which we explain further in \cref{para-epsilon-coefficients}.
    This can be combined with the relation
    \begin{equation}
        [\mathcal{X}^\mathrm{sd}_\theta] =
        \sum_{ \leftsubstack{
            & n \geq 0; \,
            \alpha_1, \dotsc, \alpha_n \in
            \uppi_0 (\mathcal{X}) \setminus \{ 0 \}, \,
            \rho \in \uppi_0 (\mathcal{X}^\mathrm{sd}) \colon \\[-2ex]
            & \theta = \alpha_1 + \alpha_1^\vee + \cdots +
            \alpha_n + \alpha_n^\vee + \rho, \\[-2ex]
            & \tau (\alpha_1) > \cdots > \tau (\alpha_n) > 0
        }}
        [\mathcal{X}^\mathrm{ss}_{\alpha_1} (\tau)] \diamond
        \cdots \diamond
        [\mathcal{X}^\mathrm{ss}_{\alpha_n} (\tau)] \diamond
        [\mathcal{X}^{\mathrm{sd}, \mathrm{ss}}_\rho (\tau)]
    \end{equation}
    from the $\Theta$-stratification of~$\mathcal{X}^\mathrm{sd}$,
    together with~\cref{eq-def-epsilon-linear-inv},
    to obtain the formula
    \begin{equation}
        \label{eq-def-epsilon-sd-inv-2}
        [\mathcal{X}^\mathrm{sd}_\theta] =
        \sum_{ \leftsubstack{
            \\[-2ex]
            & n \geq 0; \,
            \alpha_1, \dotsc, \alpha_n \in
            \uppi_0 (\mathcal{X}) \setminus \{ 0 \}, \,
            \rho \in \uppi_0 (\mathcal{X}^\mathrm{sd}) \colon \\[-2ex]
            & \theta = \alpha_1 + \alpha_1^\vee + \cdots +
            \alpha_n + \alpha_n^\vee + \rho, \\[-2ex]
            & \tau (\alpha_1) \geq \cdots \geq \tau (\alpha_n) \geq 0
        }}
        \frac{1}{|W^\mathrm{sd}_{\alpha_1, \dotsc, \alpha_n} (\tau)|} \cdot
        \epsilon_{\alpha_1} (\tau) \diamond
        \cdots \diamond
        \epsilon_{\alpha_n} (\tau) \diamond
        \epsilon^\mathrm{sd}_\rho (\tau) \ ,
    \end{equation}
    where~$W^\mathrm{sd}_{\alpha_1, \dotsc, \alpha_n} (\tau)$
    is the group of permutations~$\sigma$
    of~$\{ 1, \dotsc, n, n^\vee, \dotsc, 1^\vee \}$,
    such that $\sigma (i)^\vee = \sigma (i^\vee)$ for all~$i$,
    where we set $(i^\vee)^\vee = i$,
    satisfying the non-increasing condition
    $\tau (\alpha_{\sigma (1)}) \geq \cdots \geq \tau (\alpha_{\sigma (n)}) \geq 0$,
    where we set $\alpha_{i^\vee} = \alpha_i^\vee$.
    For example, we have
    $|W^\mathrm{sd}_{\alpha_1, \dotsc, \alpha_n} (\tau)| = 2^n \, n!$
    if $\tau (\alpha_1) = \cdots = \tau (\alpha_n) = 0$.

    The coefficients
    $1 / |W^\mathrm{sd}_{\alpha_1, \dotsc, \alpha_n} (\tau)|$
    in~\cref{eq-def-epsilon-sd-inv-2}
    can be seen as defining a \emph{stability measure}~$\mu_\tau^\mathrm{sd}$
    on~$\mathcal{X}_\theta^\mathrm{sd}$,
    in the sense of \cite{epsilon-ii}.
    The motive $\epsilon^\mathrm{sd}_\theta (\tau)$
    agrees with the motive
    $\epsilon^{(0)}_{\mathcal{X}^{\smash{\mathrm{sd}}}_\theta} (\mu_\tau^\mathrm{sd})$
    in \cite[\S5.2]{epsilon-ii}.
\end{para}

\begin{para}[Explanations of the coefficients]
    \label{para-epsilon-coefficients}
    The relations between the coefficients in
    \cref{eq-def-epsilon-linear},
    \cref{eq-def-epsilon-linear-inv},
    \cref{eq-def-epsilon-sd},
    and \cref{eq-def-epsilon-sd-inv},
    can be seen more directly by setting
    \begin{alignat*}{2}
        \delta (\tau; t)
        & = [\{ 0 \}] + \sum_{\substack{
                \alpha \in \uppi_0 (\mathcal{X}) \setminus \{ 0 \} \mathrlap{:} \\
            \tau (\alpha) = t
        }} {}
        [\mathcal{X}_\alpha^\mathrm{ss} (\tau)] \ ,
        & \qquad
        \delta^\mathrm{sd} (\tau)
        & = \sum_{\theta \in \uppi_0 (\smash{\mathcal{X}^\mathrm{sd}})} {}
        [\mathcal{X}^{\smash{\mathrm{sd,ss}}}_\theta (\tau)] \ ,
        \\
        \epsilon (\tau; t)
        & = \sum_{\substack{
                \alpha \in \uppi_0 (\mathcal{X}) \setminus \{ 0 \} \mathrlap{:} \\
            \tau (\alpha) = t
        }}
        \epsilon_\alpha (\tau) \ ,
        & \qquad
        \epsilon^\mathrm{sd} (\tau)
        & = \sum_{\theta \in \uppi_0 (\smash{\mathcal{X}^\mathrm{sd}})}
        \epsilon^\mathrm{sd}_\theta (\tau) \ ,
    \end{alignat*}
    as motives on~$\mathcal{X}$ or~$\mathcal{X}^\mathrm{sd}$,
    where $t \in T$,
    so that these relations can be rewritten as
    \begin{alignat*}{2}
        \epsilon (\tau; t)
        & = \log \delta (\tau; t) \ ,
        & \qquad
        \epsilon^\mathrm{sd} (\tau)
        & = \delta (\tau; 0)^{-1/2} \diamond \delta^\mathrm{sd} (\tau) \ ,
        \\
        \delta (\tau; t)
        & = \exp \epsilon (\tau; t) \ ,
        & \qquad
        \delta^\mathrm{sd} (\tau)
        & = \exp \Bigl( \frac{1}{2} \epsilon (\tau; 0) \Bigr) \diamond
        \epsilon^\mathrm{sd} (\tau) \ ,
    \end{alignat*}
    where we take formal power series using the product
    in the motivic Hall algebra.

    The coefficients~$(-1)^{n-1} / n$ and~$\smash{\binom{-1/2}{n}}$
    in~\cref{eq-def-epsilon-linear,eq-def-epsilon-sd}
    are determined by the coefficients~$1 / n!$ and~$1 / (2^n n!)$
    in \cref{eq-def-epsilon-linear-inv,eq-def-epsilon-sd-inv}
    in this way.
    They are the unique choice
    of coefficients only depending on~$n$,
    such that the \emph{no-pole theorem}, \cref{thm-no-pole},
    holds for the epsilon motives.
    The rough reason for this is that
    they ensure the combinatorial descriptions of the coefficients
    $1 / |W_{\alpha_1, \dotsc, \alpha_n} (\tau)|$
    and $1 / |W^\mathrm{sd}_{\alpha_1, \dotsc, \alpha_n} (\tau)|$
    in \cref{eq-def-epsilon-linear-inv-2,eq-def-epsilon-sd-inv-2},
    and from the viewpoint of~\cite{epsilon-ii},
    the no-pole theorem corresponds to the property that
    these coefficients
    sum up to~$1$ for all permutations~$\sigma$
    as described for each of them, for fixed classes~$\alpha_i$.
\end{para}

\begin{para}[Remark on permissibility]
    \label{para-permissibility-comparison}
    In the situations above,
    the permissibility of the stability condition~$\tau$
    implies that the stability measures~$\mu_\tau$ and~$\mu_\tau^\mathrm{sd}$
    are permissible in the sense of \cite[\S4.1.4]{epsilon-ii},
    which follows from \cite[Lemma~5.4.8]{epsilon-ii}.
\end{para}

\begin{para}[The no-pole theorem]
    \label[theorem]{thm-no-pole}
    A key property of the epsilon motives
    is the \emph{no-pole theorem},
    which states that they have pure virtual ranks
    in the sense of~\cref{para-virtual-rank}.
    This will allow us to define numerical invariants,
    including DT invariants,
    by taking their Euler characteristics.
\end{para}

\begin{theorem*}
    Let $\mathcal{X}$ be a linear stack over~$K$,
    with quasi-compact filtrations.

    \begin{enumerate}
        \item
            \label{item-thm-no-pole-linear}
            For any permissible stability condition~$\tau$ on~$\mathcal{X}$,
            and any $\alpha \in \uppi_0 (\mathcal{X}) \setminus \{ 0 \}$,
            the motive
            $\epsilon_\alpha (\tau)$
            has pure virtual rank~$1$.

        \item
            \label{item-thm-no-pole-sd}
            If\/~$\mathcal{X}$ is equipped with a self-dual structure,
            then for any permissible self-dual stability condition~$\tau$ on~$\mathcal{X}$,
            and any $\theta \in \uppi_0 (\mathcal{X}^\mathrm{sd})$,
            the motive
            $\epsilon^\mathrm{sd}_\theta (\tau)$
            has pure virtual rank~$0$.
    \end{enumerate}
\end{theorem*}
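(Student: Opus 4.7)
The plan is to deduce both statements from the general no-pole theorem of intrinsic DT theory in \cite[\S5.2]{epsilon-ii}, by identifying our explicit epsilon motives with the abstract ones built there from stability measures. Specifically, I aim to prove the identifications $\epsilon_\alpha(\tau) = \epsilon^{(1)}_{\mathcal{X}_\alpha}(\mu_\tau)$ and $\epsilon^{\mathrm{sd}}_\theta(\tau) = \epsilon^{(0)}_{\mathcal{X}^{\mathrm{sd}}_\theta}(\mu^{\mathrm{sd}}_\tau)$ flagged at the ends of \cref{para-epsilon-linear,para-epsilon-sd}, where $\mu_\tau$ and $\mu^{\mathrm{sd}}_\tau$ are the stability measures encoded by the weights $1/|W_{\alpha_1,\dotsc,\alpha_n}(\tau)|$ and $1/|W^{\mathrm{sd}}_{\alpha_1,\dotsc,\alpha_n}(\tau)|$ appearing in \cref{eq-def-epsilon-linear-inv-2,eq-def-epsilon-sd-inv-2}.

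To establish these identifications, I would argue that both sides of each equality are uniquely characterised by the decompositions \cref{eq-def-epsilon-linear-inv-2,eq-def-epsilon-sd-inv-2}, so it suffices to check that the combinatorial weights used there match the defining values of the stability measures in \cite{epsilon-ii}. This reduces to the fact alluded to in \cref{para-epsilon-coefficients}: the fractions $1/|W|$ and $1/|W^{\mathrm{sd}}|$ are uniquely determined, among choices depending only on the multiset of slope values, by the property that they sum to $1$ over all orderings compatible with the non-increasing slope condition -- precisely the averaging property characterising the intrinsic stability measures. With the identifications in hand, \cref{para-permissibility-comparison} ensures that our permissibility hypothesis on~$\tau$ translates into permissibility of~$\mu_\tau$ and~$\mu^{\mathrm{sd}}_\tau$, so the general no-pole theorem of \cite[\S5.2]{epsilon-ii} applies and yields pure virtual ranks $1$ and $0$. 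Conceptually, the drop from virtual rank~$1$ in the linear case to virtual rank~$0$ in the self-dual case reflects the fact that a generic object in~$\mathcal{X}$ carries a scalar $\mathbb{G}_\mathrm{m}$ in its stabilizer, while a generic self-dual object does not, as scaling does not preserve a self-dual structure.

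The main obstacle is the identification step in the self-dual case. The combinatorial bookkeeping involves the groups $W^{\mathrm{sd}}_{\alpha_1,\dotsc,\alpha_n}(\tau)$ of involution-equivariant permutations of $\{1,\dotsc,n,n^\vee,\dotsc,1^\vee\}$, and matching these to the special-face structure of the $\uppi_0(\mathcal{X})$-monoid action on $\uppi_0(\mathcal{X}^{\mathrm{sd}})$ requires unwinding both definitions carefully. The linear case is essentially due to \textcite{joyce-2007-configurations-iii}; the new content is adapting the argument to the motivic Hall-module setting using the associativity relation~\cref{eq-hall-mod-assoc}, after which part~\cref{item-thm-no-pole-sd} follows by invoking the abstract statement in the same way as part~\cref{item-thm-no-pole-linear}.
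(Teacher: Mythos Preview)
Your proposal is correct and takes essentially the same approach as the paper: both reduce to the general no-pole theorem \cite[Theorem~5.3.7]{epsilon-ii} via the identifications $\epsilon_\alpha(\tau) = \epsilon^{(1)}_{\mathcal{X}_\alpha}(\mu_\tau)$ and $\epsilon^{\mathrm{sd}}_\theta(\tau) = \epsilon^{(0)}_{\mathcal{X}^{\mathrm{sd}}_\theta}(\mu^{\mathrm{sd}}_\tau)$, together with the permissibility transfer of \cref{para-permissibility-comparison}. The paper is in fact terser than your outline --- it simply asserts the identifications (already recorded at the ends of \cref{para-epsilon-linear,para-epsilon-sd}) and cites \cite{epsilon-ii} for the general result, without spelling out the combinatorial matching of the weights $1/|W|$ and $1/|W^{\mathrm{sd}}|$ that you flag as the main verification.
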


These are special cases of the general no-pole theorem
for intrinsic DT invariants
in \cite[Theorem~5.3.7]{epsilon-ii},
and we refer to the cited work for the proof.
The linear case~\cref{item-thm-no-pole-linear}
was originally proved by
\textcite[Theorem~8.7]{joyce-2007-configurations-iii},
under a slightly different setting.
The self-dual case~\cref{item-thm-no-pole-sd}
was originally proved in an earlier version of this paper,
\cite[Appendix~E]{bu-self-dual-i},
under another slightly different setting.

\subsection{DT invariants}
\label{subsec-dt}

\begin{para}
    We now turn to the definition of \emph{DT invariants}
    for linear and self-dual linear stacks,
    the main construction of this paper.
    The linear case was first due to
    \textcite{joyce-song-2012} and
    \textcite{kontsevich-soibelman-motivic-dt},
    and the self-dual case was first constructed by the author
    in an earlier version of this paper \cite{bu-self-dual-i}.
    Here, we continue to follow the general construction of
    \textcite{epsilon-ii},
    specializing it to the self-dual linear case.

    Throughout this section,
    we assume that the base field~$K$ is algebraically closed
    and has characteristic zero.
    We work with \emph{$(-1)$-shifted symplectic stacks} over~$K$
    in the sense of \textcite{pantev-toen-vaquie-vezzosi-2013},
    which are derived algebraic stacks
    locally finitely presented over~$K$,
    equipped with a $(-1)$-shifted symplectic form~$\omega$.
\end{para}

\begin{para}[Local structure]
    \label{para-local-structure}
    Following \textcite[\S2.2.4]{bu-integral},
    we introduce the following local conditions on algebraic stacks.

    A stack is \emph{étale} (or~\emph{Nisnevich}) \emph{locally a quotient stack},
    if it admits a representable étale (or~Nisnevich) cover
    by quotient stacks of the form $U / \mathrm{GL} (n)$,
    with~$U$ an algebraic space.

    A stack is \emph{étale} (or~\emph{Nisnevich}) \emph{locally fundamental},
    if it admits a representable étale (or~Nisnevich) cover
    by quotient stacks of the form $U / \mathrm{GL} (n)$,
    with~$U$ an affine scheme.

    These conditions are preserved by taking $\mathbb{Z}_2$-fixed points
    under the assumptions in \cref{para-intro-conventions},
    by \cref{lemma-fixed-affine}.
\end{para}

\begin{para}[Derived linear stacks]
    \label{para-derived-linear-stacks}
    Following \textcite[\S2.4.6]{bu-davison-ibanez-nunez-kinjo-padurariu},
    define a \emph{derived linear stack} over~$K$
    to be a derived algebraic stack~$\mathcal{X}$,
    locally finitely presented over~$K$,
    equipped with a monoid structure~$\oplus$
    and a compatible $*/\mathbb{G}_\mathrm{m}$-action~$\odot$,
    such that the isomorphism~\cref{eq-linear-moduli-stack-grad}
    holds without taking the classical truncations,
    where we use the derived stack of graded points
    $\mathrm{Grad} (\mathcal{X})$,
    defined as the derived mapping stack from~$*/\mathbb{G}_\mathrm{m}$ to~$\mathcal{X}$.

    As in \cite[\S3.1.7]{bu-davison-ibanez-nunez-kinjo-padurariu},
    define a \emph{$(-1)$-shifted symplectic linear stack} over~$K$
    to be a derived linear stack~$\mathcal{X}$ as above,
    equipped with a $(-1)$-shifted symplectic form~$\omega$,
    such that there exists an equivalence
    $\oplus^* (\omega) \simeq \omega \boxplus \omega$
    on $\mathcal{X} \times \mathcal{X}$,
    where we do not require extra coherence conditions.

    We further assume that the classical truncation~$\mathcal{X}_\mathrm{cl}$
    of~$\mathcal{X}$
    satisfies the conditions in \cref{para-intro-conventions},
    has quasi-compact filtrations,
    and is étale locally a quotient stack.

    We will often denote $\mathcal{X}_{\mathrm{cl}}$ simply by~$\mathcal{X}$.
\end{para}

\begin{para}[The linear case]
    \label{para-dt-linear}
    Let~$\tau$ be a permissible stability condition on~$\mathcal{X}$.
    Following the construction of
    \textcite[Definition~5.15]{joyce-song-2012},
    but adapting it to our setting of linear stacks,
    for a class $\alpha \in \uppi_0 (\mathcal{X}) \setminus \{ 0 \}$,
    define the \emph{DT invariant}
    $\mathrm{DT}_\alpha (\tau) \in \mathbb{Q}$ by the formula
    \begin{equation}
        \label{eq-def-dt-linear}
        \mathrm{DT}_\alpha (\tau) =
        \int_{\mathcal{X}_\alpha} {}
        (1 - \mathbb{L}) \cdot
        \epsilon_\alpha (\tau) \cdot \nu_{\mathcal{X}} \, d \chi \ ,
    \end{equation}
    where the notation $\int {} (-) \, d \chi$
    is defined in \cref{para-virtual-rank},
    and~$\nu_\mathcal{X}$ is the \emph{Behrend function} of~$\mathcal{X}$,
    which is a constructible function on~$\mathcal{X}$
    only depending on $\mathcal{X}_\mathrm{cl}$,
    as in \textcite[\S4.1]{joyce-song-2012}
    or \textcite[\S2.5.6]{bu-integral},
    and originally due to
    \textcite{behrend-2009-dt} for Deligne--Mumford stacks.

    This integral is well-defined
    since $\epsilon_\alpha (\tau)$ is supported on the semistable locus
    $\mathcal{X}^\mathrm{ss}_\alpha (\tau)$, which is quasi-compact,
    and by the no-pole theorem,
    \cref{thm-no-pole}~\cref{item-thm-no-pole-linear}.
\end{para}

\begin{para}[The self-dual case]
    \label{para-dt-sd}
    Assume further that~$\mathcal{X}$ is equipped with
    a $\mathbb{Z}_2$-action, preserving the
    $(-1)$-shifted symplectic form,
    such that the induced $\mathbb{Z}_2$-action on~$\mathcal{X}$
    establishes it as a self-dual linear stack.

    Let~$\tau$ be a permissible self-dual stability condition on~$\mathcal{X}$.
    For a class~$\theta \in \uppi_0 (\mathcal{X}^\mathrm{sd})$,
    define the \emph{self-dual DT invariant}
    $\mathrm{DT}^\mathrm{sd}_\theta (\tau) \in \mathbb{Q}$
    by the formula
    \begin{equation}
        \label{eq-def-dt-sd}
        \mathrm{DT}^\mathrm{sd}_\theta (\tau) =
        \int_{\mathcal{X}^\mathrm{sd}_\theta}
        \epsilon^\mathrm{sd}_\theta (\tau) \cdot
        \nu_{\subXsd} \, d \chi \ .
    \end{equation}
    Again, this is well-defined by the fact that
    $\epsilon^\mathrm{sd}_\theta (\tau)$ is supported on
    $\mathcal{X}^{\smash{\mathrm{sd}, \mathrm{ss}}}_\theta (\tau)$,
    which is quasi-compact,
    and by the no-pole theorem,
    \cref{thm-no-pole}~\cref{item-thm-no-pole-sd}.

    This is one of the main constructions of this paper,
    and is a special case of the intrinsic DT invariants
    in \cite[\S6.1]{epsilon-ii}
    for the stability measure~$\mu_\tau^\mathrm{sd}$
    described in \cref{para-epsilon-sd}.
\end{para}

\begin{para}[For smooth stacks]
    \label{para-dt-smooth-stacks}
    Let~$\mathcal{X}$ be a classical smooth linear stack
    which is étale locally a quotient stack,
    and consider its $(-1)$-shifted cotangent stack
    $\mathrm{T}^* [-1] \, \mathcal{X}$,
    which has a canonical $(-1)$-shifted symplectic structure,
    making it a $(-1)$-shifted symplectic linear stack.
    We have $(\mathrm{T}^* [-1] \, \mathcal{X})_\mathrm{cl} \simeq \mathcal{X}$.
    If~$\mathcal{X}$ is equipped with a self-dual structure,
    then the fixed locus~$\mathcal{X}^\mathrm{sd}$ is also smooth,
    and $(\mathrm{T}^* [-1] \, \mathcal{X})^\mathrm{sd} \simeq
    \mathrm{T}^* [-1] \, \mathcal{X}^\mathrm{sd}$.

    In this case,
    we have $\nu_\mathcal{X} = (-1)^{\dim \mathcal{X}}$
    and $\nu_{\subXsd} = (-1)^{\dim \mathcal{X}^\mathrm{sd}}$,
    and \crefrange{eq-def-dt-linear}{eq-def-dt-sd}
    become
    \begin{align}
        \label{eq-dt-smooth}
        \mathrm{DT}_\alpha (\tau)
        & =
        (-1)^{\dim \mathcal{X}_\alpha} \cdot
        \int _{\mathcal{X}_\alpha} {}
        (1 - \mathbb{L}) \cdot \epsilon_\alpha (\tau) \, d \chi \ ,
        \\
        \label{eq-dt-smooth-sd}
        \mathrm{DT}^{\smash{\mathrm{sd}}}_\theta (\tau)
        & =
        (-1)^{\dim \mathcal{X}^\mathrm{sd}_\theta} \cdot
        \int _{\mathcal{X}^\mathrm{sd}_\theta} {}
        \epsilon^\mathrm{sd}_\theta (\tau) \, d \chi \ .
    \end{align}
    The invariants~$\mathrm{DT}_\alpha (\tau)$
    are essentially the same as
    those defined by \textcite[\S6.2]{joyce-2008-configurations-iv},
    denoted by $J^\alpha (\tau)^\Omega$ there,
    while the invariants~$\mathrm{DT}^\mathrm{sd}_\theta (\tau)$ are new.

    Note that the formulae \crefrange{eq-dt-smooth}{eq-dt-smooth-sd}
    also make sense for smooth (self-dual)
    linear stacks over an arbitrary base field~$K$,
    allowing us to also define DT invariants in this case.
\end{para}

\subsection{Motivic DT invariants}
\label{subsec-motivic-dt}

\begin{para}
    We also introduce motivic enhancements of
    the linear and self-dual DT invariants defined above,
    following the formalism of \textcite[\S6.2]{epsilon-ii}.
    These generalize the construction of
    \textcite{kontsevich-soibelman-motivic-dt}
    in the linear case.
\end{para}

\begin{para}[Monodromic motives]
    \label{para-monodromic-motives}
    For a stack~$\mathcal{X}$ over~$K$
    and a commutative ring~$A$,
    we have the ring of \emph{monodromic motives},
    denoted by $\hat{\mathbb{M}}^{\mathrm{mon}} (\mathcal{X}; A)$.
    It is similar to $\hat{\mathbb{M}} (\mathcal{X}; A)$
    defined in~\cref{para-motivic-integration},
    but its elements have the additional structure of a monodromy action.
    See \cite[\S6.2.2]{epsilon-ii} for details.

    When~$A$ contains~$\mathbb{Q}$, there is an Euler characteristic map
    $\chi \colon \hat{\mathbb{M}}^{\mathrm{mon}, \mathrm{reg}} (K; A) \to A$,
    where
    $\hat{\mathbb{M}}^{\mathrm{mon}, \mathrm{reg}} (K; A)
    \subset \hat{\mathbb{M}}^{\mathrm{mon}} (K; A)$
    is the subspace of elements that are regular at~$\mathbb{L} = 1$,
    defined similarly to \cref{para-euler-characteristic}.

    There is an element
    $\mathbb{L}^{1/2} \in \hat{\mathbb{M}}^{\mathrm{mon, reg}} (K; A)$
    satisfying $(\mathbb{L}^{1/2})^2 = \mathbb{L}$
    and $\chi (\mathbb{L}^{1/2}) = -1$.
\end{para}

\begin{para}[Orientations]
    \label{para-orientations}
    For a $(-1)$-shifted symplectic stack~$\mathcal{X}$ over~$K$,
    the \emph{canonical bundle} of~$\mathcal{X}$
    is the determinant line bundle of its cotangent complex,
    $K_{\mathcal{X}} = \det \mathbb{L}_{\mathcal{X}}$.

    An \emph{orientation} of~$\mathcal{X}$
    is a line bundle $K_\mathcal{X}^{\smash{1/2}}$ on~$\mathcal{X}$,
    with an isomorphism
    $o_\mathcal{X} \colon
    (K_\mathcal{X}^{\smash{1/2}})^{\otimes 2} \simto K_\mathcal{X}$.
    We sometimes abbreviate the pair
    $(K_\mathcal{X}^{\smash{1/2}}, o_\mathcal{X})$
    as~$o_\mathcal{X}$.

    Given such an orientation,
    if the classical truncation
    $\mathcal{X}_\mathrm{cl}$ is Nisnevich locally a quotient stack
    as in \cref{para-local-structure},
    then there is an element
    $\nu_\mathcal{X}^{\mathrm{mot}} \in
    \hat{\mathbb{M}}^{\mathrm{mot}} (\mathcal{X}; \mathbb{Z})$,
    as in \textcite[\S2.5.4]{bu-integral},
    called the \emph{motivic Behrend function},
    originally constructed by
    \textcite{bussi-joyce-meinhardt-2019}
    and \textcite{ben-bassat-brav-bussi-joyce-2015-darboux}.
\end{para}

\begin{para}[Orientation data]
    \label{para-orientation-data}
    By \textcite[Theorem~3.1.6]{bu-integral}
    or \cite[\S6.1.6]{bu-davison-ibanez-nunez-kinjo-padurariu},
    an orientation~$o_\mathcal{X}$ induces an orientation
    $o_{\mathrm{Grad} (\mathcal{X})}$
    of $\mathrm{Grad} (\mathcal{X})$.
    An orientation $o_\mathcal{X}$
    is called an \emph{orientation data},
    if it satisfies the following compatibility condition:

    \begin{itemize}
        \item
            Under the isomorphism
            \cref{eq-linear-moduli-stack-grad},
            the induced orientation
            $o_{\mathrm{Grad} (\mathcal{X})}$
            of $\mathrm{Grad} (\mathcal{X})$
            agrees with the product orientations on the left-hand side.
    \end{itemize}
    By \textcite[Theorem~3.6]{joyce-upmeier-2021-orientation-data},
    such an orientation data exists canonically
    on moduli stacks of coherent sheaves on
    Calabi--Yau threefolds.
\end{para}

\begin{para}[Self-dual orientation data]
    \label{para-self-dual-orientation-data}
    Now, let~$\mathcal{X}$ be a
    \emph{self-dual $(-1)$-shifted symplectic linear stack},
    that is, a stack~$\mathcal{X}$ as in~\cref{para-orientation-data},
    equipped with a $\mathbb{Z}_2$-action
    preserving the symplectic form~$\omega$,
    compatible with the monoid structure~$\oplus$
    and inverting the $*/\mathbb{G}_\mathrm{m}$-action~$\odot$.

    In this case, the fixed locus
    $\mathcal{X}^\mathrm{sd} = \mathcal{X}^{\mathbb{Z}_2}$
    carries an induced $(-1)$-shifted symplectic structure.
    However, an orientation of~$\mathcal{X}$
    does not naturally induce one on~$\mathcal{X}^\mathrm{sd}$.

    We define a \emph{self-dual orientation data} on~$\mathcal{X}$
    to be a pair $(o_\mathcal{X}, o_{\smash{\mathcal{X}^\mathrm{sd}}})$
    of orientations of~$\mathcal{X}$ and~$\mathcal{X}^\mathrm{sd}$,
    respectively, satisfying the following conditions:

    \begin{enumerate}
        \item
            $o_\mathcal{X}$ is an orientation data.
        \item
            Under the isomorphism~\cref{eq-linear-moduli-stack-sd-grad},
            the induced orientation of~$\mathrm{Grad} (\mathcal{X}^\mathrm{sd})$
            agrees with the product orientations on the right-hand side.
    \end{enumerate}
    The author does not know if such a self-dual orientation data,
    or even an orientation,
    exists in the case of coherent sheaves on Calabi--Yau threefolds,
    which we will discuss in \cref{subsec-threefolds} below.
\end{para}

\begin{para}[Motivic DT invariants]
    \label{para-motivic-dt}
    Let~$\mathcal{X}$ be a $(-1)$-shifted symplectic linear stack over~$K$,
    equipped with an orientation data as in \cref{para-orientation-data}.
    Assume that its classical truncation $\mathcal{X}_\mathrm{cl}$
    is Nisnevich locally a quotient stack,
    as in \cref{para-local-structure}.

    For a permissible stability condition~$\tau$ on~$\mathcal{X}$,
    and a class $\alpha \in \uppi_0 (\mathcal{X}) \setminus \{ 0 \}$,
    following the construction of
    \textcite{kontsevich-soibelman-motivic-dt},
    define the \emph{motivic DT invariant}
    $\mathrm{DT}^\mathrm{mot}_\alpha (\tau) \in
    \hat{\mathbb{M}}^\mathrm{mot} (K; \mathbb{Q})$
    by the formula
    \begin{equation}
        \label{eq-def-dt-mot}
        \mathrm{DT}^\mathrm{mot}_\alpha (\tau) =
        \int_{\mathcal{X}_\alpha} {}
        (\mathbb{L}^{1/2} - \mathbb{L}^{-1/2}) \cdot
        \epsilon_\alpha (\tau) \cdot \nu^\mathrm{mot}_{\mathcal{X}} \ ,
    \end{equation}
    where $\nu^\mathrm{mot}_{\mathcal{X}}$
    is the motivic Behrend function of~$\mathcal{X}$
    defined in \cref{para-orientations}.

    Now, suppose further that~$\mathcal{X}$
    is equipped with a self-dual structure
    as in \cref{para-self-dual-orientation-data},
    together with a self-dual orientation data.

    For a self-dual permissible stability condition~$\tau$
    and a class~$\theta \in \uppi_0 (\mathcal{X}^\mathrm{sd})$,
    define the \emph{self-dual motivic DT invariant}
    $\mathrm{DT}^{\smash{\mathrm{mot, sd}}}_\theta (\tau) \in
    \hat{\mathbb{M}}^\mathrm{mot} (K; \mathbb{Q})$
    by
    \begin{equation}
        \label{eq-def-dt-mot-sd}
        \mathrm{DT}^{\smash{\mathrm{mot, sd}}}_\theta (\tau) =
        \int_{\mathcal{X}^\mathrm{sd}_\theta} {}
        \epsilon^\mathrm{sd}_\theta (\tau) \cdot
        \nu^\mathrm{mot}_{\mathcal{X}^{\smash{\mathrm{sd}}}} \ .
    \end{equation}
    This is also a main construction of this paper,
    and is a special case of the intrinsic motivic DT invariants
    in \cite[\S6.2]{epsilon-ii}
    for the stability measure~$\mu_\tau^\mathrm{sd}$
    described in \cref{para-epsilon-sd}.
\end{para}

\begin{para}[For smooth stacks]
    \label{para-mot-dt-smooth-stacks}
    Let~$\mathcal{X}$ be a linear stack
    which is smooth and Nisnevich locally a quotient stack,
    and consider its $(-1)$-shifted cotangent stack
    $\mathrm{T}^* [-1] \, \mathcal{X}$,
    as in \cref{para-dt-smooth-stacks}.
    It has a canonical $(-1)$-shifted symplectic linear structure
    and orientation data,
    and in the self-dual case,
    also a canonical self-dual orientation data.

    The motivic Behrend function of~$\mathcal{X}$ is
    $\nu_\mathcal{X}^\mathrm{mot} = \mathbb{L}^{-{\dim \mathcal{X} / 2}}$
    by \textcite[Theorem~2.5.5]{bu-integral},
    where $\dim \mathcal{X}$ refers to the dimension of
    the classical smooth stack~$\mathcal{X}$.
    The formulae \crefrange{eq-def-dt-mot}{eq-def-dt-mot-sd}
    can be simplified to
    \begin{align}
        \label{eq-dt-mot-smooth}
        \mathrm{DT}^{\mathrm{mot}}_\alpha (\tau)
        & =
        \frac{\mathbb{L}^{1/2} - \mathbb{L}^{-1/2}}{\mathbb{L}^{\dim \mathcal{X}_\alpha / 2}} \cdot
        \int _{\mathcal{X}_\alpha} {}
        \epsilon_\alpha (\tau) \ ,
        \\
        \label{eq-dt-mot-smooth-sd}
        \mathrm{DT}^{\smash{\mathrm{mot, sd}}}_\theta (\tau)
        & =
        \mathbb{L}^{-{\dim \mathcal{X}^\mathrm{sd}_\theta / 2}} \cdot
        \int _{\mathcal{X}^\mathrm{sd}_\theta} {}
        \epsilon^\mathrm{sd}_\theta (\tau) \ .
    \end{align}
\end{para}

\section{Wall-crossing}

\label{sec-wcf}

\subsection{Wall-crossing for epsilon motives}
\label{subsec-wcf-epsilon}

\begin{para}
    \label{para-wcf-intro}
    We now discuss how to relate the
    epsilon motives and DT invariants defined in
    \cref{subsec-epsilon,subsec-dt,subsec-motivic-dt}
    when we change the stability condition~$\tau$.
    These relations are called \emph{wall-crossing formulae}.
    We first prove wall-crossing formulae for epsilon motives
    in \cref{thm-wcf-epsilon},
    which we then use in~\cref{subsec-wcf-dt}
    to obtain wall-crossing formulae for DT invariants.

    Throughout, let~$\mathcal{X}$ be a self-dual linear stack
    with quasi-compact filtrations as in \cref{para-linear-filt}.
    Results in the linear case will not need the self-dual structure on~$\mathcal{X}$,
    and we will indicate this when it is the case.
\end{para}

\begin{para}[Dominance of stability conditions]
    For stability conditions $\tau_0, \tau$ on~$\mathcal{X}$,
    following \textcite[Definition~4.10]{joyce-2007-configurations-iii},
    we say that~$\tau_0$ \emph{dominates}~$\tau$,
    if $\tau (\alpha_1) \leq \tau (\alpha_2)$
    implies $\tau_0 (\alpha_1) \leq \tau_0 (\alpha_2)$
    for all $\alpha_1, \alpha_2 \in \pi_0 (\mathcal{X}) \setminus \{ 0 \}$.

    In this case, the $\Theta$-stratification of~$\mathcal{X}$
    given by~$\tau$ refines the one given by~$\tau_0$,
    and in particular, we have
    $\mathcal{X}^\mathrm{ss}_\alpha (\tau) \subset
    \mathcal{X}^\mathrm{ss}_\alpha (\tau_0)$
    for all $\alpha \in \pi_0 (\mathcal{X}) \setminus \{ 0 \}$.

    For example, every stability condition
    is dominated by the trivial stability condition.
\end{para}

\begin{theorem}
    \label{thm-wcf-epsilon}
    \allowdisplaybreaks
    Let $\tau_+, \tau_-, \tau_0$ be
    permissible self-dual stability conditions on~$\mathcal{X}$,
    with $\tau_0$ dominating both~$\tau_+$ and~$\tau_-$.
    Then for any $\alpha \in \pi_0 (\mathcal{X})$
    and $\theta \in \pi_0 (\mathcal{X}^\mathrm{sd})$,
    we have the relations
    \begin{align}
        \label{eq-wcf-delta}
        [\mathcal{X}^\mathrm{ss}_\alpha (\tau_-)]
        & =
        \sum_{ \leftsubstack[5em]{
            & n \geq 0; \, \alpha_1, \dotsc, \alpha_n \in
            \pi_0 (\mathcal{X}) \setminus \{ 0 \} \colon \\[-1ex]
            & \alpha = \alpha_1 + \cdots + \alpha_n 
        } } {}
        S (\alpha_1, \dotsc, \alpha_n; \tau_+, \tau_-) \cdot
        [\mathcal{X}^\mathrm{ss}_{\alpha_1} (\tau_+)] * \cdots *
        [\mathcal{X}^\mathrm{ss}_{\alpha_n} (\tau_+)] \ ,
        \\*[1ex]
        \label{eq-wcf-delta-sd}
        [\mathcal{X}^{\smash{\mathrm{sd,ss}}}_\theta (\tau_-)]
        & =
        \sum_{ \leftsubstack[5em]{
            & n \geq 0; \, \alpha_1, \dotsc, \alpha_n \in
            \pi_0 (\mathcal{X}) \setminus \{ 0 \}, \,
            \rho \in \pi_0 (\mathcal{X}^\mathrm{sd}) \colon
            \\[-1ex]
            & \theta = \alpha_1 + \alpha_1^\vee + \cdots +
            \alpha_n + \alpha_n^\vee + \rho
        } } {}
        S^\mathrm{sd} (\alpha_1, \dotsc, \alpha_n; \tau_+, \tau_-) \cdot
        [\mathcal{X}^\mathrm{ss}_{\alpha_1} (\tau_+)] \diamond \cdots \diamond
        [\mathcal{X}^\mathrm{ss}_{\alpha_n} (\tau_+)] \diamond
        [\mathcal{X}^\mathrm{sd,ss}_{\rho} (\tau_+)] \ ,
        \raisetag{4ex}
        \\[1ex]
        \label{eq-wcf-epsilon}
        \epsilon_\alpha (\tau_-)
        & =
        \sum_{ \leftsubstack[5em]{
            & n \geq 0; \, \alpha_1, \dotsc, \alpha_n
            \in \pi_0 (\mathcal{X}) \setminus \{ 0 \} \colon
            \\[-1ex]
            & \alpha = \alpha_1 + \cdots + \alpha_n 
        } } {}
        U (\alpha_1, \dotsc, \alpha_n; \tau_+, \tau_-) \cdot
        \epsilon_{\alpha_1} (\tau_+) * \cdots *
        \epsilon_{\alpha_n} (\tau_+) \ ,
        \\*[1ex]
        \label{eq-wcf-epsilon-sd}
        \epsilon^\mathrm{sd}_\theta (\tau_-)
        & =
        \sum_{ \leftsubstack[5em]{
            & n \geq 0; \, \alpha_1, \dotsc, \alpha_n \in \pi_0 (\mathcal{X}) \setminus \{ 0 \}, \,
            \rho \in \pi_0 (\mathcal{X}^\mathrm{sd}) \colon
            \\[-1ex]
            & \theta = \alpha_1 + \alpha_1^\vee + \cdots +
            \alpha_n + \alpha_n^\vee + \rho
        } } {}
        U^\mathrm{sd} (\alpha_1, \dotsc, \alpha_n; \tau_+, \tau_-) \cdot
        \epsilon_{\alpha_1} (\tau_+) \diamond \cdots \diamond
        \epsilon_{\alpha_n} (\tau_+) \diamond
        \epsilon^\mathrm{sd}_{\rho} (\tau_+) \ ,
    \end{align}
    in $\mathbb{M} (\mathcal{X}_\alpha; \mathbb{Q})$
    and\/ $\mathbb{M} (\mathcal{X}^\mathrm{sd}_\theta; \mathbb{Q})$,
    where the sums are finite, and
    \upshape
    \begin{align}
        \label{eq-def-s}
        S (\alpha_1, \dotsc, \alpha_n; \tau_+, \tau_-) & =
        \prod_{i=1}^{n-1} {} \left\{
            \mathrlap{ \begin{array}{ll}
                1, & \tau_+ (\alpha_i) > \tau_+ (\alpha_{i+1}) \text{ and } \\
                & \hspace{2em} \tau_- (\alpha_1 + \cdots + \alpha_i)
                \leq \tau_- (\alpha_{i+1} + \cdots + \alpha_n) \\
                -1, & \tau_+ (\alpha_i) \leq \tau_+ (\alpha_{i+1}) \text{ and } \\
                & \hspace{2em} \tau_- (\alpha_1 + \cdots + \alpha_i)
                > \tau_- (\alpha_{i+1} + \cdots + \alpha_n) \\
                0, & \text{otherwise}
            \end{array} }
            \hspace{20em}
        \right\} \ ,
        \\
        \label{eq-def-ssd}
        S^\mathrm{sd} (\alpha_1, \dotsc, \alpha_n; \tau_+, \tau_-) & =
        \prod_{i=1}^n {} \left\{
            \mathrlap{ \begin{array}{ll}
                1, & \tau_+ (\alpha_i) > \tau_+ (\alpha_{i+1}) \text{ and } 
                \tau_- (\alpha_1 + \cdots + \alpha_i) \leq 0 \\
                -1, & \tau_+ (\alpha_i) \leq \tau_+ (\alpha_{i+1}) \text{ and }
                \tau_- (\alpha_1 + \cdots + \alpha_i) > 0 \\
                0, & \text{otherwise}
            \end{array} }
            \hspace{20em}
        \right\} \ ,
        \\[1ex]
        U (\alpha_1, \dotsc, \alpha_n; \tau_+, \tau_-)
        & =
        \notag \\*[-.5ex]
        & \hspace{-7em}
        \sum_{ \leftsubstack[8em]{
            \\[-2ex]
            & 0 = a_0 < \cdots < a_m = n, \ 
            0 = b_0 < \cdots < b_\ell = m \colon
            \\[-1ex]
            & \text{Writing }
            \beta_i = \alpha_{a_{i-1}+1} + \cdots + \alpha_{a_i}
            \text{ for }
            i = 1, \dotsc, m, \\[-1ex]
            & \text{and }
            \gamma_i = \beta_{b_{i-1}+1} + \cdots + \beta_{b_i}
            \text{ for }
            i = 1, \dotsc, \ell, \\[-1ex]
            & \text{we have } \tau_+ (\alpha_j) = \tau_+ (\beta_i)
            \text{ for all } a_{i-1} < j \leq a_i , \\[-1ex]
            & \text{and } \tau_- (\gamma_i) = \tau_- (\alpha_1 + \cdots + \alpha_n)
            \text{ for all } i = 1, \dotsc, \ell
        } } {}
        \frac{(-1)^{\ell-1}}{\ell} \cdot \biggl(
            \prod_{i=1}^{\ell}
            S (\beta_{b_{i-1}+1}, \dotsc, \beta_{b_i}; \tau_+, \tau_-)
        \biggr) \cdot
        \biggl(
            \prod_{i=1}^m \frac{1}{(a_i - a_{i-1})!} 
        \biggr) \ , 
        \raisetag{4ex}
        \label{eq-def-u}
        \\[1ex]
        U^\mathrm{sd} (\alpha_1, \dotsc, \alpha_n; \tau_+, \tau_-)
        & =
        \notag \\*[-.5ex]
        & \hspace{-7em}
        \sum_{ \leftsubstack[8em]{
            \\[-2ex]
            & 0 = a_0 < \cdots < a_m \leq n, \ 
            0 = b_0 < \cdots < b_\ell \leq m \colon
            \\[-1ex]
            & \text{Writing }
            \beta_i = \alpha_{a_{i-1}+1} + \cdots + \alpha_{a_i}
            \text{ for }
            i = 1, \dotsc, m, \\[-1ex]
            & \text{and }
            \gamma_i = \beta_{b_{i-1}+1} + \cdots + \beta_{b_i}
            \text{ for }
            i = 1, \dotsc, \ell, \\[-1ex]
            & \text{we have } \tau_+ (\alpha_j) = \tau_+ (\beta_i)
            \text{ for all } a_{i-1} < j \leq a_i , \\[-1ex]
            & \tau_+ (\alpha_j) = 0
            \text{ for all } j > a_m , \\[-1ex]
            & \text{and } \tau_- (\gamma_i) = 0
            \text{ for all } i = 1, \dotsc, \ell
        } } {}
        \binom{-1/2}{\ell} \cdot \biggl(
            \prod_{i=1}^{\ell}
            S (\beta_{b_{i-1}+1}, \dotsc, \beta_{b_i}; \tau_+, \tau_-)
        \biggr) \cdot
        S^\mathrm{sd} (\beta_{b_\ell+1}, \dotsc, \beta_{m}; \tau_+, \tau_-)
        \cdot {}
        \notag \\*[-7ex]
        & \hspace{10em}
        \biggl(
            \prod_{i=1}^m \frac{1}{(a_i - a_{i-1})!} 
        \biggr) \cdot
        \frac{1}{2^{n-a_m} \, (n - a_m)!} \ ,
        \label{eq-def-usd}
    \end{align}
    \itshape
    where we set $\tau_+ (\alpha_{n+1}) = 0$
    in \cref{eq-def-ssd}.

    For
    \cref{eq-wcf-delta,eq-wcf-epsilon},
    we do not need~$\mathcal{X}$ or $\tau_+, \tau_-, \tau_0$ to be self-dual.
\end{theorem}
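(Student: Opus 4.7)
The plan is to prove the four identities in the order stated. The first two, \cref{eq-wcf-delta,eq-wcf-delta-sd}, are wall-crossing formulae for the motives of semistable loci; the last two, \cref{eq-wcf-epsilon,eq-wcf-epsilon-sd}, are deduced from them by formally applying the log/exp identities of \cref{para-epsilon-coefficients}.

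For \cref{eq-wcf-delta}, I would follow the approach of \textcite{joyce-2007-configurations-iii}. Starting from the $\Theta$-stratification identity
\[
    [\mathcal{X}_\alpha] = \sum_{\tau_-(\alpha_1) > \cdots > \tau_-(\alpha_n)} [\mathcal{X}^\mathrm{ss}_{\alpha_1}(\tau_-)] * \cdots * [\mathcal{X}^\mathrm{ss}_{\alpha_n}(\tau_-)] \ ,
\]
and the analogous identity for $\tau_+$ in place of $\tau_-$, I isolate the $n=1$ term, which is $[\mathcal{X}^\mathrm{ss}_\alpha(\tau_-)]$, on the $\tau_-$-side, and by induction on $\alpha$ (the sums being finite thanks to \cref{lemma-finite-decomposition}) substitute the $\tau_+$-expansion for each $[\mathcal{X}^\mathrm{ss}_{\alpha'}(\tau_-)]$ with $\alpha'$ a proper summand of~$\alpha$. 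The combinatorial identity showing that the resulting coefficient equals $S$ is a standard telescoping argument: each factor of $S$ records whether two neighbouring $\tau_+$-semistable slices stay in separate $\tau_-$-semistable blocks or get merged, with $\tau_0$ supplying the common refinement that keeps the manipulation well-defined.

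For \cref{eq-wcf-delta-sd}, the same argument is adapted to the self-dual setting. The self-dual $\Theta$-stratification of \cref{para-sd-linear-stack-stability} gives
\[
    [\mathcal{X}^\mathrm{sd}_\theta] = \sum {} [\mathcal{X}^\mathrm{ss}_{\alpha_1}(\tau_-)] \diamond \cdots \diamond [\mathcal{X}^\mathrm{ss}_{\alpha_n}(\tau_-)] \diamond [\mathcal{X}^{\mathrm{sd,ss}}_\rho(\tau_-)]
\]
summed over HN-type decompositions with $\tau_-(\alpha_1) > \cdots > \tau_-(\alpha_n) > 0$, and similarly for $\tau_+$. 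Isolating the term $[\mathcal{X}^{\mathrm{sd,ss}}_\theta(\tau_-)]$ and inductively substituting \cref{eq-wcf-delta} into the outer $\alpha_i$-factors (and the self-dual analogue into the middle factor) yields the coefficient $S^\mathrm{sd}$. The asymmetric last factor in \cref{eq-def-ssd}, comparing $\tau_-(\alpha_1 + \cdots + \alpha_i)$ to $0$ rather than to the $\tau_-$-slope of the complement, precisely reflects that the middle self-dual piece has $\tau_-$-slope $0$, and that its `dual tail' carries no new information.

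Finally, for \cref{eq-wcf-epsilon,eq-wcf-epsilon-sd}, I formally substitute \cref{eq-wcf-delta,eq-wcf-delta-sd} into the identities $\epsilon(\tau;t) = \log\delta(\tau;t)$ and $\epsilon^\mathrm{sd}(\tau) = \delta(\tau;0)^{-1/2} \diamond \delta^\mathrm{sd}(\tau)$ recorded in \cref{para-epsilon-coefficients}. Expanding $\log$ via its Taylor series at the outermost step produces the coefficient $(-1)^{\ell-1}/\ell$ and the outer partition $(b_i)$; rewriting each $\delta(\tau_+;t)$ inside as $\exp\epsilon(\tau_+;t)$ produces the factors $1/(a_i-a_{i-1})!$ and the inner partition $(a_i)$, with the constraint $\tau_+(\alpha_j) = \tau_+(\beta_i)$ for $a_{i-1} < j \leq a_i$ coming from the fact that all factors inside a single $\delta(\tau_+;t)$ share the same slope~$t$. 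This yields exactly \cref{eq-def-u}. The self-dual case is analogous, with the binomial expansion $(\cdot)^{-1/2} = \sum_\ell \binom{-1/2}{\ell}(\cdot)^\ell$ replacing the $\log$-expansion, and the extra factor $1/(2^{n-a_m}(n-a_m)!)$ arising from expanding $\delta^\mathrm{sd}(\tau_+)$ via \cref{eq-def-epsilon-sd-inv}. The main obstacle is the combinatorial bookkeeping needed to match the resulting formula against the prescribed $U$ and $U^\mathrm{sd}$, tracking which intermediate sums are strictly versus non-strictly decreasing at each stage; this is standard but tedious, and alternatively follows from the general intrinsic DT framework of \textcite[\S5.2]{epsilon-ii}, where such manipulations are treated uniformly through stability measures.
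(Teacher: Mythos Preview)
Your treatment of \cref{eq-wcf-epsilon,eq-wcf-epsilon-sd} matches the paper's: substitute the delta wall-crossing into the definition of the epsilon motives for~$\tau_-$, then re-expand each $\tau_+$-delta in terms of $\tau_+$-epsilons via \cref{eq-def-epsilon-linear-inv,eq-def-epsilon-sd-inv}, and track the resulting coefficients.

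For \cref{eq-wcf-delta,eq-wcf-delta-sd}, however, your argument has a gap in how it uses~$\tau_0$. You start from the $\Theta$-stratification of the full $\mathcal{X}_\alpha$, but this stratification can be infinite (only locally finite), and ``induction on~$\alpha$'' is not well-founded on an arbitrary commutative monoid~$\uppi_0(\mathcal{X})$; \cref{lemma-finite-decomposition} only controls decompositions with \emph{equal} slopes, not HN-type decompositions with strictly decreasing slopes. The role of~$\tau_0$ is not merely to ``supply a common refinement'': it is used to cut down to the quasi-compact locus $\mathcal{X}^\mathrm{ss}_\alpha(\tau_0)$, inside which both $\tau_+$ and $\tau_-$ give \emph{finite} $\Theta$-stratifications, precisely because every piece then has the same $\tau_0$-slope so \cref{lemma-finite-decomposition} applies.

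The paper's route is accordingly different in structure. It first writes down the $\tau_\pm$-HN stratification of $\mathcal{X}^\mathrm{ss}_\alpha(\tau_0)$ (and its self-dual analogue), giving \cref{eq-wcf-delta,eq-wcf-delta-sd} in the special case $(\tau_+,\tau_-) = (\tau_\pm,\tau_0)$. It then formally inverts these to obtain the opposite direction $(\tau_0,\tau_\pm)$, with explicit signed sums. Finally, it composes the step $\tau_+ \to \tau_0$ with the step $\tau_0 \to \tau_-$, and the key input is the composition lemma for the coefficients $S$ and $S^\mathrm{sd}$ (\cref{lemma-s-comp}), which shows directly that this composite yields $S(\ldots;\tau_+,\tau_-)$ and $S^\mathrm{sd}(\ldots;\tau_+,\tau_-)$. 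This replaces your ``standard telescoping argument'' with a clean two-step factorisation through~$\tau_0$, and avoids any induction on classes.
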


The formulae \cref{eq-wcf-delta,eq-wcf-epsilon}
were originally due to \textcite[Theorem~5.2]{joyce-2008-configurations-iv},
under a slightly different setting.
The self-dual versions \cref{eq-wcf-delta-sd,eq-wcf-epsilon-sd}
are new.

The coefficients \crefrange{eq-def-s}{eq-def-usd}
are combinatorial,
and are defined whenever~$\tau_\pm$ are maps from the set
$C = \{ \alpha_i + \cdots + \alpha_j \mid 1 \leq i \leq j \leq n \}$
of symbolic sums to totally ordered sets~$T_\pm$,
such that $\tau_+ (\gamma_1) \leq \tau_+ (\gamma_2)$
implies $\tau_+ (\gamma_1) \leq \tau_+ (\gamma_1 + \gamma_2) \leq \tau_+ (\gamma_2)$
whenever $\gamma_1, \gamma_2, \gamma_1 + \gamma_2 \in C$,
and similarly for~$\tau_-$.
For \cref{eq-def-ssd,eq-def-usd},
we also require distinguished elements $0 \in T_\pm$.

\begin{proof}
    \allowdisplaybreaks
    The $\Theta$-stratifications of
    $\mathcal{X}^\mathrm{ss}_\alpha (\tau_0)$
    and
    $\mathcal{X}^{\smash{\mathrm{sd,ss}}}_\theta (\tau_0)$
    defined by~$\tau_+$ and~$\tau_-$
    give the relations
    \begin{align}
        \label{eq-wcf-dom-delta}
        [\mathcal{X}^\mathrm{ss}_\alpha (\tau_0)]
        & =
        \sum_{ \leftsubstack[5em]{
            & n > 0; \, \alpha_1, \dotsc, \alpha_n \in
            \pi_0 (\mathcal{X}) \setminus \{ 0 \} \colon \\[-1ex]
            & \alpha = \alpha_1 + \cdots + \alpha_n, \\[-1ex]
            & \tau_0 (\alpha_1) = \cdots = \tau_0 (\alpha_n), \\[-1ex]
            & \tau_\pm (\alpha_1) > \cdots > \tau_\pm (\alpha_n)
        } } {}
        [\mathcal{X}^\mathrm{ss}_{\alpha_1} (\tau_\pm)] * \cdots *
        [\mathcal{X}^\mathrm{ss}_{\alpha_n} (\tau_\pm)] \ ,
        \\[1ex]
        \label{eq-wcf-dom-delta-sd}
        [\mathcal{X}^{\smash{\mathrm{sd,ss}}}_\theta (\tau_0)]
        & =
        \sum_{ \leftsubstack[5em]{
            & n \geq 0; \, \alpha_1, \dotsc, \alpha_n \in
            \pi_0 (\mathcal{X}) \setminus \{ 0 \}, \,
            \rho \in \pi_0 (\mathcal{X}^\mathrm{sd}) \colon
            \\[-1ex]
            & \theta = \alpha_1 + \alpha_1^\vee + \cdots +
            \alpha_n + \alpha_n^\vee + \rho, \\[-1ex]
            & \tau_0 (\alpha_1) = \cdots = \tau_0 (\alpha_n) = 0, \\[-1ex]
            & \tau_\pm (\alpha_1) > \cdots > \tau_\pm (\alpha_n) > 0
        } } {}
        [\mathcal{X}^\mathrm{ss}_{\alpha_1} (\tau_\pm)] \diamond \cdots \diamond
        [\mathcal{X}^\mathrm{ss}_{\alpha_n} (\tau_\pm)] \diamond
        [\mathcal{X}^\mathrm{sd,ss}_{\rho} (\tau_\pm)] \ ,
    \end{align}
    where the `$\pm$' signs mean that we have
    a relation for~$\tau_+$, and another for~$\tau_-$.
    These are finite sums by \cref{lemma-finite-decomposition},
    and agree with \crefrange{eq-wcf-delta}{eq-wcf-delta-sd}
    with $\tau_{\pm}, \tau_0$ in place of~$\tau_+, \tau_-$.

    These relations then imply the relations
    \begin{align}
        \label{eq-wcf-anti-dom-delta}
        [\mathcal{X}^\mathrm{ss}_\alpha (\tau_\pm)]
        & =
        \sum_{ \leftsubstack[5em]{
            & n > 0; \, \alpha_1, \dotsc, \alpha_n \in
            \pi_0 (\mathcal{X}) \setminus \{ 0 \} \colon \\[-1ex]
            & \alpha = \alpha_1 + \cdots + \alpha_n, \\[-1ex]
            & \tau_0 (\alpha_1) = \cdots = \tau_0 (\alpha_n), \\[-1ex]
            & \tau_\pm (\alpha_1 + \cdots + \alpha_i) >
            \tau_\pm (\alpha_{i+1} + \cdots + \alpha_n)
            \text{ for } i = 1, \dotsc, n-1
        } } {}
        (-1)^{n-1} \cdot
        [\mathcal{X}^\mathrm{ss}_{\alpha_1} (\tau_0)] * \cdots *
        [\mathcal{X}^\mathrm{ss}_{\alpha_n} (\tau_0)] \ ,
        \\[1ex]
        \label{eq-wcf-anti-dom-delta-sd}
        [\mathcal{X}^{\smash{\mathrm{sd,ss}}}_\theta (\tau_\pm)]
        & =
        \sum_{ \leftsubstack[5em]{
            & n \geq 0; \, \alpha_1, \dotsc, \alpha_n \in
            \pi_0 (\mathcal{X}) \setminus \{ 0 \}, \,
            \rho \in \pi_0 (\mathcal{X}^\mathrm{sd}) \colon
            \\[-1ex]
            & \theta = \alpha_1 + \alpha_1^\vee + \cdots +
            \alpha_n + \alpha_n^\vee + \rho, \\[-1ex]
            & \tau_0 (\alpha_1) = \cdots = \tau_0 (\alpha_n) = 0, \\[-1ex]
            & \tau_\pm (\alpha_1 + \cdots + \alpha_i) > 0
            \text{ for } i = 1, \dotsc, n
        } } {}
        (-1)^n \cdot
        [\mathcal{X}^\mathrm{ss}_{\alpha_1} (\tau_0)] \diamond \cdots \diamond
        [\mathcal{X}^\mathrm{ss}_{\alpha_n} (\tau_0)] \diamond
        [\mathcal{X}^\mathrm{sd,ss}_{\rho} (\tau_0)] \ ,
    \end{align}
    which agree with
    \crefrange{eq-wcf-delta}{eq-wcf-delta-sd}
    with $\tau_0, \tau_{\pm}$ in place of~$\tau_+, \tau_-$.
    Indeed, these can be verified by expanding the right-hand sides of
    \crefrange{eq-wcf-anti-dom-delta}{eq-wcf-anti-dom-delta-sd}
    using \crefrange{eq-wcf-dom-delta}{eq-wcf-dom-delta-sd},
    then applying \cref{lemma-s-comp} below to see that
    the results are equal to the left-hand sides.

    Now, expanding the right-hand sides of
    \crefrange{eq-wcf-anti-dom-delta}{eq-wcf-anti-dom-delta-sd}
    for~$\tau_+$
    using \crefrange{eq-wcf-dom-delta}{eq-wcf-dom-delta-sd}
    for~$\tau_-$,
    then applying \cref{lemma-s-comp} below,
    gives the general case of
    \crefrange{eq-wcf-delta}{eq-wcf-delta-sd}.

    To verify the relations
    \crefrange{eq-wcf-epsilon}{eq-wcf-epsilon-sd},
    we first substitute the relations
    \crefrange{eq-wcf-delta}{eq-wcf-delta-sd},
    in
    \cref{eq-def-epsilon-linear}, \cref{eq-def-epsilon-sd}
    for~$\tau_-$,
    then substitute in
    \cref{eq-def-epsilon-linear-inv}, \cref{eq-def-epsilon-sd-inv}
    for~$\tau_+$.
    Keeping track of the coefficients
    gives the desired relations.
\end{proof}

\begin{lemma}
    \label{lemma-s-comp}
    \allowdisplaybreaks
    For symbols $\alpha_1, \dotsc, \alpha_n$
    and maps $\tau_1, \tau_2, \tau_3$
    from $\{ \alpha_i + \cdots + \alpha_j \mid 1 \leq i \leq j \leq n \}$
    to totally ordered sets
    with distinguished elements $0$,
    we have the identities
    \begin{align}
        \label{eq-comb-s-comp}
        S (\alpha_1, \dotsc, \alpha_n; \tau_1, \tau_3) & =
        \hspace{2em}
        \mathclap{\sum_{(\beta_1, \dotsc, \beta_m) \in Q}}
        \hspace{2em}
        S(\beta_1, \dotsc, \beta_m; \tau_2, \tau_3) \cdot
        \prod_{i=1}^m S (\alpha_{a_{i-1}+1} \, , \dotsc, \alpha_{a_i}; \tau_1, \tau_2) \ ,
        \\
        S^\mathrm{sd} (\alpha_1, \dotsc, \alpha_n; \tau_1, \tau_3) & =
        \hspace{2em}
        \mathclap{\sum_{(\beta_1, \dotsc, \beta_m) \in \smash{Q^\mathrm{sd}}}}
        \hspace{2em}
        S^\mathrm{sd}(\beta_1, \dotsc, \beta_m; \tau_2, \tau_3) \cdot {}
        \notag \\*
        \label{eq-comb-ssd-comp}
        & \hspace{2em}
        \smash[b]{\biggl(
                \prod_{i=1}^m S (\alpha_{a_{i-1}+1} \, , \dotsc, \alpha_{a_i}; \tau_1, \tau_2)
        \biggr) \cdot S^\mathrm{sd} (\alpha_{a_m+1} \, , \dotsc, \alpha_n; \tau_1, \tau_2)} \ ,
    \end{align}
    where
    \upshape
    \vspace{-12pt}
    \begin{align*}
        Q & = \biggl\{ (\beta_1, \dotsc, \beta_m) \biggm| \begin{array}{l}
            m \geq 1, \ 0 = a_0 < \cdots < a_m = n , \\
            \beta_i = \alpha_{a_{i-1}+1} + \cdots + \alpha_{a_i} \text{ for all } i
        \end{array} \biggr\} \ ,
        \\
        Q^\mathrm{sd} & = \biggl\{ (\beta_1, \dotsc, \beta_m) \biggm| \begin{array}{l}
            m \geq 0, \ 0 = a_0 < \cdots < a_m \leq n , \\
            \beta_i = \alpha_{a_{i-1}+1} + \cdots + \alpha_{a_i} \text{ for all } i
        \end{array} \biggr\} \ .
    \end{align*}
\end{lemma}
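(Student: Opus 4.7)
Both identities are purely combinatorial statements about the sign functions $S$ and $S^{\mathrm{sd}}$, and depend only on the see-saw hypotheses imposed on $\tau_1, \tau_2, \tau_3$. I would prove them by induction on $n$, following the template of Joyce's composition identity for wall-crossing coefficients in the linear case, cf.\ \cite[Theorem~4.5]{joyce-2008-configurations-iv}, and adapting it to the self-dual setting.

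For \eqref{eq-comb-s-comp}, the base case $n=1$ is trivial, with both sides equal to the empty product~$1$. For the inductive step $n \geq 2$, I would partition the set $Q$ of compositions of $(\alpha_1, \ldots, \alpha_n)$ according to whether the last block $\beta_m$ is the singleton $\{\alpha_n\}$ or strictly contains both $\alpha_{n-1}$ and $\alpha_n$. The first case is handled by stripping off $\beta_m = \alpha_n$ and invoking the inductive hypothesis on $(\alpha_1, \ldots, \alpha_{n-1})$; the second case by regarding $\alpha_{n-1}$ and $\alpha_n$ as a single merged symbol and applying induction to $(\alpha_1, \ldots, \alpha_{n-2}, \alpha_{n-1} + \alpha_n)$. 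Combining both cases reduces the identity to a position-wise check relating the sign factors at position $n-1$ of $S(\cdot; \tau_1, \tau_3)$, $S(\cdot; \tau_1, \tau_2)$, and $S(\cdot; \tau_2, \tau_3)$, which follows by direct case analysis, using the see-saw property to control the $\tau_2$-comparisons between composite block sums. Identity \eqref{eq-comb-ssd-comp} is handled by the same strategy, but now with a three-way split at the last position, since $\alpha_n$ can lie in the trailing $S^{\mathrm{sd}}$-segment (i.e.\ $a_m < n$), form the singleton block $\beta_m = \alpha_n$ (with $a_m = n$), or lie inside a larger final block; the pointwise identity at position $n-1$ is analogous but uses the fact that $S^{\mathrm{sd}}$ compares partial sums with~$0$ rather than with complementary sums.

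The main obstacle will be careful bookkeeping of the sign factors across the three stability conditions. In particular, the $\tau_2$-comparisons appearing in the outer $S(\beta_1, \ldots, \beta_m; \tau_2, \tau_3)$ involve block sums $\tau_2(\beta_j)$ rather than the atomic values $\tau_2(\alpha_k)$, so they do not cancel pointwise against the $\tau_2$-comparisons from the inner $S$-factors without invoking the see-saw property of~$\tau_2$. This is precisely where the hypothesis on $\tau_1, \tau_2, \tau_3$ enters essentially in the proof.
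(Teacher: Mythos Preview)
Your handling of \eqref{eq-comb-s-comp} matches the paper's: both invoke Joyce's result \cite[Theorem~4.5]{joyce-2008-configurations-iv}, and your inductive sketch is a reasonable outline of how one would reprove it. The difference is in \eqref{eq-comb-ssd-comp}. Rather than running a parallel induction with a three-way split at the last position, the paper observes that
\[
    S^{\mathrm{sd}}(\alpha_1,\dotsc,\alpha_n;\tau_i,\tau_j)
    \;=\;
    S(\alpha_1,\dotsc,\alpha_n,\infty;\tau_i,\tau_j),
\]
where one adjoins a formal symbol~$\infty$ and declares $\tau_i(\alpha_j+\cdots+\alpha_n+\infty)=0$ for all $i$ and all $1\le j\le n+1$. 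Comparing with the definitions \eqref{eq-def-s}--\eqref{eq-def-ssd}, this is immediate: appending~$\infty$ turns the product over $i=1,\dotsc,n-1$ into one over $i=1,\dotsc,n$, makes $\tau_+(\alpha_{n+1})=0$, and converts every tail comparison $\tau_-(\alpha_{i+1}+\cdots+\alpha_n+\infty)$ into a comparison with~$0$. With this in hand, \eqref{eq-comb-ssd-comp} is literally \eqref{eq-comb-s-comp} applied to $(\alpha_1,\dotsc,\alpha_n,\infty)$, since in every composition the last block must contain~$\infty$ and the corresponding inner and outer $S$-factors become $S^{\mathrm{sd}}$-factors by the same identification.

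What this buys: the paper's reduction is a one-line argument that inherits all the bookkeeping from Joyce's theorem; your direct induction would have to redo that bookkeeping in the self-dual setting. Your approach is not wrong, but note that the ``merge $\alpha_{n-1}$ and $\alpha_n$'' step in Case~B alters not only the factor at position $n-1$ but also the $\tau_1$-comparison at position $n-2$ (which changes from $\tau_1(\alpha_{n-2})$ vs.\ $\tau_1(\alpha_{n-1})$ to $\tau_1(\alpha_{n-2})$ vs.\ $\tau_1(\alpha_{n-1}+\alpha_n)$), so the reduction to a single ``position-wise check'' is more delicate than your sketch suggests. The $\infty$-trick sidesteps this entirely.
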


\begin{proof}
    The identity \cref{eq-comb-s-comp}
    was proved in \textcite[Theorem~4.5]{joyce-2008-configurations-iv}.
    The identity \cref{eq-comb-ssd-comp}
    follows from \cref{eq-comb-s-comp}
    and the fact that
    $S^\mathrm{sd} (\alpha_1, \dotsc, \alpha_n; \tau_i, \tau_j) =
    S (\alpha_1, \dotsc, \alpha_n, \infty; \tau_i, \tau_j)$,
    where we set $\tau_i (\alpha_j + \cdots + \alpha_n + \infty) = 0$
    for all~$i$ and all $1 \leq j \leq n + 1$.
\end{proof}

\begin{para}[Weakening the assumptions]
    In \cref{thm-wcf-epsilon},
    we can slightly weaken the assumptions
    by allowing~$\tau_0$ to be non-permissible,
    so that $\mathcal{X}^\mathrm{ss}_\alpha (\tau_0)$
    can be non-quasi-compact,
    and we add the extra assumption that the sums
    \crefrange{eq-wcf-anti-dom-delta}{eq-wcf-anti-dom-delta-sd}
    are locally finite for all classes~$\alpha, \theta$.
    In this case, the relations
    \crefrange{eq-wcf-dom-delta}{eq-wcf-dom-delta-sd}
    are always valid as locally finite sums,
    and the proof shows that the relations
    \crefrange{eq-wcf-delta}{eq-wcf-epsilon-sd}
    still hold as locally finite sums.
\end{para}

\subsection{An anti-symmetrized version}
\label{subsec-wcf-anti-sym}

\begin{para}
    In this section, we rewrite the relations
    \crefrange{eq-wcf-epsilon}{eq-wcf-epsilon-sd}
    in terms of anti-symmetrized product operations,
    instead of the operations~$*$ and~$\diamond$.
    This will be useful in writing down wall-crossing formulae
    for DT invariants in \cref{subsec-wcf-dt} below.

    As in \cref{para-wcf-intro},
    let~$\mathcal{X}$ be a self-dual linear stack
    with quasi-compact filtrations.
\end{para}

\begin{para}[Lie algebras and twisted modules]
    \label{para-twisted-modules}
    The motivic Hall algebra $\mathbb{M} (\mathcal{X})$
    can be seen as a Lie algebra
    using the commutator
    \begin{equation}
        [a, b] = a * b - b * a \ .
    \end{equation}
    This was considered in \textcite[\S5.2]{joyce-2007-configurations-ii}.
    It is equipped with a contravariant involution~$(-)^\vee$,
    meaning that $[a^\vee, b^\vee] = [b, a]^\vee$
    for $a, b \in \mathbb{M} (\mathcal{X})$,
    which follows from \cref{thm-hall-assoc}.

    We define a similar anti-symmetrized operation
    ${\heart} \colon \mathbb{M} (\mathcal{X}) \otimes \mathbb{M} (\mathcal{X}^\mathrm{sd}) \to \mathbb{M} (\mathcal{X}^\mathrm{sd})$
    by
    \begin{equation}
        a \heart m = a \diamond m - a^\vee \diamond m \ ,
    \end{equation}
    where~$\diamond$ is the multiplication in the motivic Hall module.
    This does not define a Lie algebra module,
    but a \emph{twisted module},
    in that it satisfies the relations
    \begin{align}
        \label{eq-twisted-anti-symmetry}
        a \heart m
        & = -a^\vee \heart m \ ,
        \\
        \label{eq-twisted-jacobi}
        a \heart (b \heart m) - b \heart (a \heart m)
        & = [a, b] \heart m - [a^\vee, b] \heart m \ .
    \end{align}
    We see \cref{eq-twisted-jacobi}
    as a Jacobi identity twisted by the
    contravariant involution of the Lie algebra,
    giving the extra term $[a^\vee, b] \heart m$.

    Note that over~$\mathbb{Q}$,
    a twisted module in this sense is equivalent to
    a usual module for the Lie subalgebra consisting of elements~$a$
    with $a^\vee = -a$,
    with the action $a \cdot m = (1/2) (a \heart m)$.
\end{para}

\begin{theorem}
    \label{thm-wcf-epsilon-antisym}
    \allowdisplaybreaks
    The relations
    \crefrange{eq-wcf-epsilon}{eq-wcf-epsilon-sd}
    can be written only using the Lie bracket~$[-, -]$
    and the operation~$\heart$,
    without using the products~$*$ or~$\diamond$.

    More precisely, using the notations of
    \cref{thm-wcf-epsilon}, we have the relations
    \begin{align}
        \label{eq-wcf-epsilon-lie}
        \epsilon_\alpha (\tau_-)
        & =
        \sum_{ \leftsubstack[5em]{
            & n \geq 0; \, \alpha_1, \dotsc, \alpha_n
            \in \pi_0 (\mathcal{X}) \setminus \{ 0 \} \colon
            \\[-1ex]
            & \alpha = \alpha_1 + \cdots + \alpha_n 
        } } {}
        \tilde{U} (\alpha_1, \dotsc, \alpha_n; \tau_+, \tau_-) \cdot
        \bigl[ \dotsc \bigl[ \bigl[ \epsilon_{\alpha_1} (\tau_+),
        \epsilon_{\alpha_2} (\tau_+) \bigr], \dotsc \bigr] ,
        \epsilon_{\alpha_n} (\tau_+) \bigr] \ ,
        \\[1ex]
        \label{eq-wcf-epsilon-lie-sd}
        \epsilon^\mathrm{sd}_\theta (\tau_-)
        & =
        \sum_{ \leftsubstack[5em]{
            & n \geq 0; \, m_1, \dotsc, m_n > 0;
            \\[-1ex]
            & \alpha_{1,1}, \dotsc, \alpha_{1,m_1}; \dotsc;
            \alpha_{n,1}, \dotsc, \alpha_{n,m_n} \in \uppi_0 (\mathcal{X}) \setminus \{ 0 \}; \,
            \rho \in \uppi_0 (\mathcal{X}^\mathrm{sd}) \colon
            \\[-.5ex]
            & \theta =
            (\alpha_{1,1} + \alpha_{1,1}^\vee + \cdots +
            \alpha_{1,m_1} + \alpha_{1,m_1}^\vee) + \cdots +
            (\alpha_{n,1} + \alpha_{n,1}^\vee + \cdots +
            \alpha_{n,m_n} + \alpha_{n,m_n}^\vee) + \rho
        } } {}
        \tilde{U}^\mathrm{sd} (\alpha_{1,1}, \dotsc, \alpha_{1,m_1}; \dotsc;
        \alpha_{n,1}, \dotsc, \alpha_{n,m_n}; \tau_+, \tau_-) \cdot {}
        \notag \\
        & \hspace{2em}
        \bigl[ \bigl[ \epsilon_{\alpha_{1,1}} (\tau_+), \dotsc \bigr] ,
        \epsilon_{\alpha_{1,m_1}} (\tau_+) \bigr] \heart \cdots \heart
        \bigl[ \bigl[ \epsilon_{\alpha_{n,1}} (\tau_+), \dotsc \bigr] ,
        \epsilon_{\alpha_{n,m_n}} (\tau_+) \bigr] \heart 
        \epsilon^\mathrm{sd}_{\rho} (\tau_+) \ ,
    \end{align}
    where $\tilde{U} (\ldots)$ and $\tilde{U}^\mathrm{sd} (\ldots)$
    are certain combinatorial coefficients,
    whose choices are not unique.
\end{theorem}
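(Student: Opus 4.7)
The plan is to start from the explicit wall-crossing identities \cref{eq-wcf-epsilon,eq-wcf-epsilon-sd} obtained in \cref{thm-wcf-epsilon} and progressively replace iterated $*$-products and $\diamond$-actions by Lie brackets and twisted brackets $\heart$. The conceptual reason this is possible is the \emph{no-pole theorem} \cref{thm-no-pole}: the motives $\epsilon_\alpha(\tau_+)$ have pure virtual rank one, so they lie in a Lie subalgebra of $\mathbb{M}(\mathcal{X};\mathbb{Q})$ under the commutator $[-,-]$, and $\epsilon^{\mathrm{sd}}_\rho(\tau_+)$ has pure virtual rank zero, so it is stable under the action of this Lie subalgebra via $\heart$. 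Consequently, the left-hand sides of \cref{eq-wcf-epsilon,eq-wcf-epsilon-sd} already lie in the image of the bracket and $\heart$ operations applied to $\{\epsilon_{\beta}(\tau_+)\}_\beta$ and $\{\epsilon^{\mathrm{sd}}_\rho(\tau_+)\}_\rho$, and the task reduces to exhibiting explicit combinatorial coefficients witnessing this.

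For the linear identity \cref{eq-wcf-epsilon-lie}, I follow the argument of \textcite[Theorem~5.4]{joyce-2008-configurations-iv}. Concretely, I symmetrize the sum in \cref{eq-wcf-epsilon} over permutations of the $\alpha_i$ sharing a common $\tau_+$-slope, so that each symmetrized product $\sum_\sigma \epsilon_{\alpha_{\sigma(1)}}(\tau_+) * \cdots * \epsilon_{\alpha_{\sigma(n)}}(\tau_+)$ can be expressed as a linear combination of iterated left-nested Lie brackets using a Dynkin--Specht--Wever--type substitution in the free associative algebra. The vanishing of the symmetric (commutative) part is guaranteed by the rank-one property, so the bookkeeping produces a valid family $\tilde{U}(\alpha_1,\dotsc,\alpha_n;\tau_+,\tau_-)$; the non-uniqueness of $\tilde{U}$ reflects the Jacobi relations among left-nested brackets.

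For the self-dual identity \cref{eq-wcf-epsilon-lie-sd}, I proceed in two stages. First, I partition the ordered tuple $(\alpha_1,\dotsc,\alpha_n)$ appearing in \cref{eq-wcf-epsilon-sd} into consecutive blocks $(\alpha_{i,1},\dotsc,\alpha_{i,m_i})$ along which $\tau_+$ is constant; the inner $*$-products within each block are converted into left-nested Lie brackets by the linear procedure above. Second, I convert the outer $\diamond$-actions of these bracket expressions onto $\epsilon^{\mathrm{sd}}_\rho(\tau_+)$ into $\heart$-actions. The key algebraic inputs here are the identity $\epsilon_{\alpha^\vee}(\tau_+) = \epsilon_\alpha(\tau_+)^\vee$, which follows from the self-duality of $\tau_+$ and the definition \cref{eq-def-epsilon-linear} together with the involution relation \cref{eq-hall-alg-invol}, and the twisted-module relations \cref{eq-twisted-anti-symmetry,eq-twisted-jacobi}. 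The self-duality of the sum in \cref{eq-wcf-epsilon-sd} (coming from the symmetry $\alpha_i \leftrightarrow \alpha_i^\vee$ in the constraint) means that any $\diamond$-term $a \diamond m$ is paired with $a^\vee \diamond m$ up to the combinatorial coefficient, and pairs of such terms combine with $\pm \tfrac12$ prefactors into a single $a \heart m$ expression. That the resulting element is a genuine combination of $\heart$-actions of Lie brackets is ensured by the virtual-rank-zero property of $\epsilon^{\mathrm{sd}}_\theta(\tau_-)$.

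The main obstacle is the purely combinatorial verification that the antisymmetrized coefficients derived from $U$ and $U^{\mathrm{sd}}$ in \cref{eq-def-u,eq-def-usd} are supported on the expected bracket/$\heart$ patterns, i.e.\@ that the symmetric parts really do cancel in both stages; this is notationally heavy but in principle routine once the inductive structure in $n$ and the block sizes $m_i$ is set up, especially since the non-uniqueness of $\tilde{U}^{\mathrm{sd}}$ absorbs all remaining freedom, so no canonical closed formula needs to be written down.
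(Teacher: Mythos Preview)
Your proposal has a genuine conceptual gap. The paper explicitly states, immediately after the theorem, that the result is ``essentially a combinatorial property of the coefficients $U(\ldots)$ and $U^\mathrm{sd}(\ldots)$''---that is, an identity that must hold in the free associative algebra (resp.\ free module) on symbols $\epsilon_{\alpha}$, $\epsilon^\mathrm{sd}_\rho$. Your argument instead leans on the no-pole theorem (\cref{thm-no-pole}) to conclude that the symmetric parts ``vanish''. But the no-pole theorem is a statement about the virtual rank of the specific element $\epsilon_\alpha(\tau_-)$ in the particular ring $\mathbb{M}(\mathcal{X};\mathbb{Q})$; it does not show that the $U$-weighted sum of products of \emph{formal symbols} lies in the Lie subalgebra they generate. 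The motivic Hall algebra is not the universal enveloping algebra of its rank-one part, so an element of rank one that happens to be written as a polynomial in rank-one elements need not be rewritable using only brackets of those same elements with \emph{universal} coefficients. In short, virtual rank constrains the image of the sum, not the combinatorics of the coefficients.

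Your sketch for the self-dual case has further issues. The blocks $(\alpha_{i,1},\dotsc,\alpha_{i,m_i})$ in \cref{eq-wcf-epsilon-lie-sd} are not required to have constant $\tau_+$-slope, so partitioning by slope does not produce the target shape. And the claimed pairing $a\diamond m \leftrightarrow a^\vee\diamond m$ is not automatic from the symmetry of the constraint $\theta=\sum(\alpha_i+\alpha_i^\vee)+\rho$: the coefficient $U^\mathrm{sd}(\alpha_1,\dotsc,\alpha_n;\tau_+,\tau_-)$ from \cref{eq-def-usd} is not manifestly invariant under $\alpha_i\mapsto\alpha_i^\vee$, and even if it were, converting an \emph{iterated} $\diamond$-expression to an iterated $\heart$-expression requires more than pairing the outermost factor. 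The paper does not attempt this here; it cites \textcite[Theorem~5.4]{joyce-2008-configurations-iv} for the linear case and defers the self-dual case to \cite{epsilon-iii} (with a direct combinatorial proof in \cite[Appendix~D]{bu-self-dual-i}). If you want to give a self-contained argument, you must work directly with the explicit formulae \cref{eq-def-u,eq-def-usd} and produce $\tilde U$, $\tilde U^\mathrm{sd}$ by manipulating those sums, without invoking virtual rank.
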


Here, the formulae~\crefrange{eq-wcf-epsilon-lie}{eq-wcf-epsilon-lie-sd}
are just~\crefrange{eq-wcf-epsilon}{eq-wcf-epsilon-sd}
with the terms grouped differently,
and this theorem is essentially a combinatorial property
of the coefficients~$U (\ldots)$ and~$U^\mathrm{sd} (\ldots)$
stating that such regrouping is always possible.
The non-uniqueness of the coefficients
is due to relations in the Lie brackets
and the twisted module operation,
such as the Jacobi identity and \crefrange{eq-twisted-anti-symmetry}{eq-twisted-jacobi}.

\begin{proof}
    The relation \cref{eq-wcf-epsilon-lie}
    was shown in \textcite[Theorem~5.4]{joyce-2008-configurations-iv}.
    The relation \cref{eq-wcf-epsilon-lie-sd}
    will follow from a more general result to appear in
    \cite{epsilon-iii};
    an earlier version of this paper,
    \cite[Appendix~D]{bu-self-dual-i}, contains a direct but rather complicated proof
    of this combinatorial property.
\end{proof}

\subsection{Wall-crossing for DT invariants}
\label{subsec-wcf-dt}

\begin{para}
    In this section, we prove wall-crossing formulae
    for our self-dual DT invariants defined in \cref{subsec-dt,subsec-motivic-dt},
    using the wall-crossing formulae for epsilon motives
    established in \cref{thm-wcf-epsilon,thm-wcf-epsilon-antisym}.
    A key ingredient is the \emph{motivic integral identity}
    for Behrend functions proved by
    \textcite{bu-integral},
    generalizing the integral identities in the linear case of
    \textcite[Conjecture~4]{kontsevich-soibelman-motivic-dt},
    proved by \textcite{le-2015-integral},
    and \textcite[Theorem~5.11]{joyce-song-2012}.

    Throughout, let~$K$ be an algebraically closed field of characteristic~$0$,
    and let~$\mathcal{X}$ be a self-dual $(-1)$-shifted symplectic linear stack over~$K$,
    as in \cref{para-derived-linear-stacks}.
    We further assume that the classical truncation~$\mathcal{X}_\mathrm{cl}$
    is Nisnevich locally fundamental,
    as in \cref{para-local-structure}.
\end{para}

\begin{theorem}
    \label{thm-wcf-dt}
    \allowdisplaybreaks
    Let $\tau_+, \tau_-, \tau_0$ be
    permissible self-dual stability conditions on~$\mathcal{X}$,
    with $\tau_0$ dominating both~$\tau_+$ and~$\tau_-$.
    Then for any $\alpha \in \pi_0 (\mathcal{X})$
    and $\theta \in \pi_0 (\mathcal{X}^\mathrm{sd})$,
    we have the wall-crossing formulae
    \begin{align}
        \label{eq-wcf-dt}
        \mathrm{DT}_\alpha (\tau_-)
        & =
        \sum_{ \leftsubstack[5em]{
            & n \geq 0; \, \alpha_1, \dotsc, \alpha_n
            \in \pi_0 (\mathcal{X}) \setminus \{ 0 \} \colon
            \\[-1ex]
            & \alpha = \alpha_1 + \cdots + \alpha_n 
        } } {}
        \tilde{U} (\alpha_1, \dotsc, \alpha_n; \tau_+, \tau_-) \cdot
        \ell (\alpha_1, \dotsc, \alpha_n) \cdot
        \mathrm{DT}_{\alpha_1} (\tau_+) \cdots
        \mathrm{DT}_{\alpha_n} (\tau_+) \ ,
        \raisetag{6ex}
        \\[1ex]
        \label{eq-wcf-dt-sd}
        \mathrm{DT}^\mathrm{sd}_\theta (\tau_-)
        & =
        \sum_{ \leftsubstack[5em]{
            & n \geq 0; \, m_1, \dotsc, m_n > 0;
            \\[-1ex]
            & \alpha_{1,1}, \dotsc, \alpha_{1,m_1}; \dotsc;
            \alpha_{n,1}, \dotsc, \alpha_{n,m_n} \in \uppi_0 (\mathcal{X}) \setminus \{ 0 \}; \,
            \rho \in \uppi_0 (\mathcal{X}^\mathrm{sd}) \colon
            \\[-.5ex]
            & \theta =
            (\alpha_{1,1} + \alpha_{1,1}^\vee + \cdots +
            \alpha_{1,m_1} + \alpha_{1,m_1}^\vee) + \cdots +
            (\alpha_{n,1} + \alpha_{n,1}^\vee + \cdots +
            \alpha_{n,m_n} + \alpha_{n,m_n}^\vee) + \rho
        } } {}
        \tilde{U}^\mathrm{sd} (\alpha_{1,1}, \dotsc, \alpha_{1,m_1}; \dotsc;
        \alpha_{n,1}, \dotsc, \alpha_{n,m_n}; \tau_+, \tau_-) \cdot {}
        \\
        & \hspace{2em}
        \ell^\mathrm{sd} (\alpha_{1,1}, \dotsc, \alpha_{1,m_1}; \dotsc;
        \alpha_{n,1}, \dotsc, \alpha_{n,m_n}; \rho) \cdot {}
        \notag \\
        & \hspace{2em}
        \bigl( \mathrm{DT}_{\alpha_{1,1}} (\tau_+) \cdots
        \mathrm{DT}_{\alpha_{1,m_1}} (\tau_+) \bigr) \cdots
        \bigl( \mathrm{DT}_{\alpha_{n,1}} (\tau_+) \cdots
        \mathrm{DT}_{\alpha_{n,m_n}} (\tau_+) \bigr) \cdots
        \mathrm{DT}^\mathrm{sd}_{\rho} (\tau_+) \ ,
        \notag
    \end{align}
    where the sums contain finitely many non-zero terms,
    the coefficients
    $\tilde{U} (\ldots), \tilde{U}^\mathrm{sd} (\ldots) \in \mathbb{Q}$
    are defined in \cref{thm-wcf-epsilon-antisym},
    and the coefficients
    $\ell (\ldots), \ell^\mathrm{sd} (\ldots) \in \mathbb{Z}$
    are defined in \cref{para-coef-l} below.

    If, moreover, $\mathcal{X}$ is equipped with
    an orientation data~$o_\mathcal{X}$
    or a self-dual orientation data
    $(o_\mathcal{X}, o_{\smash{\mathcal{X}^\mathrm{sd}}})$,
    then we have the wall-crossing formulae
    \begin{align}
        \label{eq-wcf-dt-mot}
        \mathrm{DT}^\mathrm{mot}_\alpha (\tau_-)
        & =
        \sum_{ \leftsubstack[5em]{
            & n \geq 0; \, \alpha_1, \dotsc, \alpha_n
            \in \pi_0 (\mathcal{X}) \setminus \{ 0 \} \colon
            \\[-1ex]
            & \alpha = \alpha_1 + \cdots + \alpha_n 
        } } {}
        \tilde{U} (\alpha_1, \dotsc, \alpha_n; \tau_+, \tau_-) \cdot
        L (\alpha_1, \dotsc, \alpha_n) \cdot
        \mathrm{DT}^\mathrm{mot}_{\alpha_1} (\tau_+) \cdots
        \mathrm{DT}^\mathrm{mot}_{\alpha_n} (\tau_+) \ ,
        \raisetag{6ex}
        \\[1ex]
        \label{eq-wcf-dt-mot-sd}
        \mathrm{DT}^{\smash{\mathrm{mot,sd}}}_\theta (\tau_-)
        & =
        \sum_{ \leftsubstack[5em]{
            & n \geq 0; \, m_1, \dotsc, m_n > 0;
            \\[-1ex]
            & \alpha_{1,1}, \dotsc, \alpha_{1,m_1}; \dotsc;
            \alpha_{n,1}, \dotsc, \alpha_{n,m_n} \in \uppi_0 (\mathcal{X}) \setminus \{ 0 \}; \,
            \rho \in \uppi_0 (\mathcal{X}^\mathrm{sd}) \colon
            \\[-.5ex]
            & \theta =
            (\alpha_{1,1} + \alpha_{1,1}^\vee + \cdots +
            \alpha_{1,m_1} + \alpha_{1,m_1}^\vee) + \cdots +
            (\alpha_{n,1} + \alpha_{n,1}^\vee + \cdots +
            \alpha_{n,m_n} + \alpha_{n,m_n}^\vee) + \rho
        } } {}
        \tilde{U}^\mathrm{sd} (\alpha_{1,1}, \dotsc, \alpha_{1,m_1}; \dotsc;
        \alpha_{n,1}, \dotsc, \alpha_{n,m_n}; \tau_+, \tau_-) \cdot {}
        \\
        & \hspace{2em}
        L^\mathrm{sd} (\alpha_{1,1}, \dotsc, \alpha_{1,m_1}; \dotsc;
        \alpha_{n,1}, \dotsc, \alpha_{n,m_n}; \rho) \cdot {}
        \notag \\
        & \hspace{2em}
        \bigl( \mathrm{DT}^\mathrm{mot}_{\alpha_{1,1}} (\tau_+) \cdots
        \mathrm{DT}^\mathrm{mot}_{\alpha_{1,m_1}} (\tau_+) \bigr) \cdots
        \bigl( \mathrm{DT}^\mathrm{mot}_{\alpha_{n,1}} (\tau_+) \cdots
        \mathrm{DT}^\mathrm{mot}_{\alpha_{n,m_n}} (\tau_+) \bigr) \cdots
        \mathrm{DT}^{\smash{\mathrm{mot,sd}}}_{\rho} (\tau_+) \ ,
        \notag
    \end{align}
    respectively,
    where the coefficients
    $L (\ldots), L^{\smash{\mathrm{sd}}} (\ldots)
    \in \mathbb{Z} [\mathbb{L}^{\pm 1/2}]$
    are defined in \cref{para-coef-l} below.
\end{theorem}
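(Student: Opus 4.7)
The plan is to start from the anti-symmetrised wall-crossing formulae for epsilon motives, \crefrange{eq-wcf-epsilon-lie}{eq-wcf-epsilon-lie-sd}, and integrate both sides against the appropriate Behrend function to pass from motivic identities to numerical (and motivic) identities of DT invariants. By \cref{thm-no-pole}, each $\epsilon_\alpha (\tau)$ has pure virtual rank~$1$ and each $\epsilon^\mathrm{sd}_\theta (\tau)$ has pure virtual rank~$0$, so after multiplying by $(1 - \mathbb{L})$ in the linear case, and by nothing in the self-dual case, each term in these formulae is integrable in the sense of \cref{para-virtual-rank}, and the Euler characteristic integration map of \cref{para-euler-characteristic} can be applied term by term.

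The key reduction step uses the motivic integral identity of \textcite[Theorem~4.2.2]{bu-integral}, which describes how integration against the Behrend function interacts with the Hall-algebra commutator. Concretely, for motives $a, b$ supported on semistable loci $\mathcal{X}^\mathrm{ss}_{\alpha_1}, \mathcal{X}^\mathrm{ss}_{\alpha_2}$, the identity yields a relation of the form
\begin{equation*}
    \int_{\mathcal{X}_{\alpha_1 + \alpha_2}} {} (1 - \mathbb{L}) \cdot [a, b] \cdot \nu_{\mathcal{X}} \, d \chi
    = \ell (\alpha_1, \alpha_2) \cdot
    \biggl( \int_{\mathcal{X}_{\alpha_1}} {} (1 - \mathbb{L}) \cdot a \cdot \nu_{\mathcal{X}} \, d \chi \biggr)
    \biggl( \int_{\mathcal{X}_{\alpha_2}} {} (1 - \mathbb{L}) \cdot b \cdot \nu_{\mathcal{X}} \, d \chi \biggr) \ ,
\end{equation*}
with $\ell (\alpha_1, \alpha_2) \in \mathbb{Z}$ built from the skew-symmetrised Euler form on the $K$-theory of $\mathcal{X}$; the motivic refinement of the same identity replaces $(1 - \mathbb{L})$ with $(\mathbb{L}^{1/2} - \mathbb{L}^{-1/2})$, $\nu_{\mathcal{X}}$ with $\nu^\mathrm{mot}_{\mathcal{X}}$, and $\ell (\alpha_1, \alpha_2)$ with a Laurent polynomial $L (\alpha_1, \alpha_2) \in \mathbb{Z} [\mathbb{L}^{\pm 1/2}]$. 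In the self-dual case, the analogous identity for the heart operation expresses
\begin{equation*}
    \int_{\mathcal{X}^\mathrm{sd}_{\alpha + \alpha^\vee + \rho}} {} (a \heart m) \cdot \nu_{\subXsd} \, d \chi
    = \ell^\mathrm{sd} (\alpha, \rho) \cdot
    \biggl( \int_{\mathcal{X}_\alpha} {} (1 - \mathbb{L}) \cdot a \cdot \nu_{\mathcal{X}} \, d \chi \biggr)
    \biggl( \int_{\mathcal{X}^\mathrm{sd}_\rho} {} m \cdot \nu_{\subXsd} \, d \chi \biggr) \ ,
\end{equation*}
and similarly in motivic form. This self-dual integral identity is the natural $\mathbb{Z}_2$-fixed-point counterpart of the linear one, obtained by applying the identity of \cite{bu-integral} to the stack of filtrations underlying $\diamond$ and restricting to $\mathbb{Z}_2$-fixed loci as in \cref{para-sd-grad-filt,para-sd-stack-filt}.

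Applying these two binary identities repeatedly to the iterated brackets in \cref{eq-wcf-epsilon-lie} and to the iterated brackets and single heart in \cref{eq-wcf-epsilon-lie-sd} yields \crefrange{eq-wcf-dt}{eq-wcf-dt-sd} and \crefrange{eq-wcf-dt-mot}{eq-wcf-dt-mot-sd}, where the coefficients $\ell (\alpha_1, \dotsc, \alpha_n)$, $\ell^\mathrm{sd} (\ldots)$, $L (\alpha_1, \dotsc, \alpha_n)$, and $L^\mathrm{sd} (\ldots)$ are products of binary coefficients along the nested-bracket structure, to be defined in \cref{para-coef-l}. Finiteness of the resulting sums follows from permissibility of $\tau_\pm$ via \cref{lemma-finite-decomposition}, together with the fact that $\tilde{U} (\ldots)$ and $\tilde{U}^\mathrm{sd} (\ldots)$ are supported on finitely many tuples of classes.

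The main obstacle is the self-dual integral identity itself: while the linear case is \textcite[Theorem~4.2.2]{bu-integral}, its self-dual analogue for the heart operation and the motivic Behrend function requires verifying that the chosen self-dual orientation data, as in \cref{para-self-dual-orientation-data}, is compatible with the filtration morphisms appearing in the definition of $\diamond$, and that the resulting sign and $\mathbb{L}^{1/2}$-weight contributions match those on the right-hand side. Together with the virtual-rank bookkeeping that ensures each iterated integral lies in the regular subspace $\hat{\mathbb{M}}^\mathrm{reg} (K; \mathbb{Q})$ where $\chi$ is defined, this is the technical core of the proof, and it is handled by assembling the general integral identity of \cite{bu-integral} with the intrinsic DT framework of \cite{epsilon-ii}.
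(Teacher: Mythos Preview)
Your proposal is correct and follows essentially the same approach as the paper: start from the anti-symmetrised epsilon wall-crossing formulae \crefrange{eq-wcf-epsilon-lie}{eq-wcf-epsilon-lie-sd}, then push them through integration against the (motivic) Behrend function using the integral identities of \cite{bu-integral}, which convert iterated brackets and $\heart$-operations into products of DT invariants with the coefficients $\ell, L, \ell^\mathrm{sd}, L^\mathrm{sd}$. The paper packages this slightly more cleanly by first observing that the integration maps $\mathbb{M}_\mathrm{qc}(\mathcal{X}) \to \Lambda_\mathcal{X}$ and $\mathbb{M}_\mathrm{qc}(\mathcal{X}^\mathrm{sd}) \to \Lambda_\mathcal{X}^\mathrm{sd}$ (and their numerical versions into $\bar{\Lambda}_\mathcal{X}, \bar{\Lambda}_\mathcal{X}^\mathrm{sd}$) are Lie algebra and twisted module homomorphisms, so that applying them to \crefrange{eq-wcf-epsilon-lie}{eq-wcf-epsilon-lie-sd} gives the result in one stroke; your ``apply binary identities repeatedly'' is the unpacked version of the same argument. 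One small correction: the self-dual integral identity is not obtained by restricting the linear one to $\mathbb{Z}_2$-fixed loci, but is itself a direct instance of the general integral identity \cite[Theorem~4.2.2]{bu-integral} applied to the stack~$\mathcal{X}^\mathrm{sd}$ and its filtration diagram.
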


The proof of the theorem will be given in
\cref{para-pf-wcf-dt}.

\begin{para}[Symmetric stacks]
    \label{para-symmetric-stacks}
    The wall-crossing formulae in \cref{thm-wcf-dt}
    provide a condition for the
    DT invariants to be independent
    of the choice of the stability condition.

    We say that a $(-1)$-shifted symplectic stack~$\mathcal{X}$
    is \emph{numerically symmetric},
    if $\vdim \mathrm{Filt} (\mathcal{X}) = 0$,
    meaning that this holds on every connected component
    of $\mathrm{Filt} (\mathcal{X})$.
    See \textcite[\S4.3]{bu-davison-ibanez-nunez-kinjo-padurariu}
    for examples of stacks satisfying this condition.

    For example,
    if~$\mathcal{X}$ is a self-dual $(-1)$-shifted symplectic linear stack,
    then~$\mathcal{X}$ is numerically symmetric
    if and only if $\vdim \mathcal{X}_{\alpha, \beta}^+ = 0$
    for all~$\alpha, \beta \in \uppi_0 (\mathcal{X})$,
    and~$\mathcal{X}^\mathrm{sd}$ is numerically symmetric
    if and only if $\vdim \mathcal{X}_{\alpha, \theta}^{\smash{\mathrm{sd}, +}} = 0$
    for all~$\alpha \in \uppi_0 (\mathcal{X})$ and~$\theta \in \uppi_0 (\mathcal{X}^\mathrm{sd})$.

    When~$\mathcal{X}$ and~$\mathcal{X}^\mathrm{sd}$
    are numerically symmetric, the coefficients
    $L (\ldots), \ell (\ldots)$ are zero unless $n \leq 1$,
    and the coefficients
    $L^\mathrm{sd} (\ldots), \ell^\mathrm{sd} (\ldots)$ are zero unless $n = 0$,
    which follow from their definitions.
    This immediately implies the following:
\end{para}

\begin{corollary}
    \label{cor-wcf-symmetric}
    In the situation of \cref{thm-wcf-dt},
    assume that~$\mathcal{X}$ and~$\mathcal{X}^\mathrm{sd}$
    are numerically symmetric.
    Then the relations
    \crefrange{eq-wcf-dt}{eq-wcf-dt-mot-sd}
    simplify to
    \begin{alignat}{2}
        \mathrm{DT}_\alpha (\tau_-)
        & = \mathrm{DT}_\alpha (\tau_+) \ ,
        & \qquad
        \mathrm{DT}^{\smash{\mathrm{sd}}}_\theta (\tau_-)
        & = \mathrm{DT}^{\smash{\mathrm{sd}}}_\theta (\tau_+) \ ,
        \\
        \mathrm{DT}^\mathrm{mot}_\alpha (\tau_-)
        & = \mathrm{DT}^\mathrm{mot}_\alpha (\tau_+) \ ,
        & \qquad
        \mathrm{DT}^{\smash{\mathrm{mot,sd}}}_\theta (\tau_-)
        & = \mathrm{DT}^{\smash{\mathrm{mot,sd}}}_\theta (\tau_+) \ .
    \end{alignat}
    In particular, if\/~$\mathcal{X}$ has quasi-compact connected components,
    then all the above invariants are independent
    of the choice of the stability condition.
\end{corollary}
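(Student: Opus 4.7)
The plan is to read off the claimed identities directly from the wall-crossing formulae of \cref{thm-wcf-dt}, using the vanishing of coefficients announced in \cref{para-symmetric-stacks}.

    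First I would unpack the definitions of $\ell, L, \ell^\mathrm{sd}, L^\mathrm{sd}$ from \cref{para-coef-l}. These coefficients arise from applying the motivic integral identity for Behrend functions of \textcite{bu-integral} to the iterated Lie brackets and $\heart$-operations appearing on the right-hand sides of \crefrange{eq-wcf-epsilon-lie}{eq-wcf-epsilon-lie-sd}. Each Lie bracket $[a, b]$ with $a$ and $b$ of classes $\alpha, \beta \in \uppi_0(\mathcal{X})$ contributes, after Behrend-weighted integration, a factor that vanishes whenever $\vdim \mathcal{X}^+_{\alpha, \beta} = 0$, and each application of $\heart$ involving $\alpha \in \uppi_0(\mathcal{X})$, $\theta \in \uppi_0(\mathcal{X}^\mathrm{sd})$ contributes a factor vanishing when $\vdim \mathcal{X}^{\mathrm{sd},+}_{\alpha, \theta} = 0$. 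Under the numerical symmetry hypothesis, all these virtual dimensions vanish on every component, so every summand in \cref{eq-wcf-dt,eq-wcf-dt-mot} with $n \geq 2$, and every summand in \cref{eq-wcf-dt-sd,eq-wcf-dt-mot-sd} with $n \geq 1$, is annihilated.

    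Next I would identify the surviving terms. In the linear formulae, only the $n = 1$ term survives: the sum collapses to $\alpha_1 = \alpha$ with coefficients $\tilde{U} = \ell = L = 1$, yielding $\mathrm{DT}_\alpha(\tau_-) = \mathrm{DT}_\alpha(\tau_+)$ and its motivic analogue. In the self-dual formulae, only the $n = 0$ term survives: the sum collapses to $\rho = \theta$ with $\tilde{U}^\mathrm{sd} = \ell^\mathrm{sd} = L^\mathrm{sd} = 1$, giving the self-dual equalities.

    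For the final assertion, suppose $\mathcal{X}$ has quasi-compact connected components. Then the trivial stability condition of \cref{eg-stack-stability}~\cref{item-stack-stability-trivial} is permissible and self-dual, and being constant it dominates every other stability condition. Taking it as $\tau_0$, the equalities just established apply to any pair $\tau_+, \tau_-$ of permissible self-dual stability conditions, showing that all the invariants coincide with those for the trivial stability condition and are therefore independent of the choice of stability condition.

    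The only point of substance is verifying the asserted vanishing of $\ell, L, \ell^\mathrm{sd}, L^\mathrm{sd}$ under numerical symmetry. The paper signals this as a direct consequence of the definitions in \cref{para-coef-l}, so the main task is simply to track the $\vdim$-factors produced by the Behrend-function integral identity; there is no hidden obstacle beyond this bookkeeping.
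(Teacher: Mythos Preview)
Your proposal is correct and follows essentially the same approach as the paper: observe that numerical symmetry forces the coefficients $\ell, L$ to vanish for $n \geq 2$ and $\ell^\mathrm{sd}, L^\mathrm{sd}$ for $n \geq 1$, leaving only the identity term, and then invoke the trivial stability condition for the final assertion. One small remark: your explanation routes the vanishing through the Behrend-weighted integration identities, whereas the paper reads it off directly from the lattice-algebra definitions in \cref{para-coef-l} (the bracket in $\bar{\Lambda}_\mathcal{X}$ carries an explicit factor of $\vdim \mathcal{X}^+_{\alpha,\beta}$, and $[\lambda_\alpha,\lambda_\beta]$ in $\Lambda_\mathcal{X}$ carries $\mathbb{L}^{\vdim/2}-\mathbb{L}^{-\vdim/2}$); both viewpoints give the same conclusion and the paper's is marginally more direct.
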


Here, the final claim follows from taking
$\tau_0$ and $\tau_+$
to be the trivial stability condition,
which is permissible when~$\mathcal{X}$ has quasi-compact connected components.

The remaining part of this section is devoted to
the proof of \cref{thm-wcf-dt}.

\begin{para}[Lattice algebras and modules]
    Define
    \begin{equation*}
        \Lambda_\mathcal{X} =
        \bigoplus_{\alpha \in \uppi_0 (\mathcal{X})}
        \hat{\mathbb{M}}^\mathrm{mon} (K; \mathbb{Q}) \cdot \lambda_\alpha \ ,
        \qquad
        \Lambda_\mathcal{X}^\mathrm{sd} =
        \bigoplus_{\theta \in \uppi_0 (\mathcal{X}^\mathrm{sd})}
        \hat{\mathbb{M}}^\mathrm{mon} (K; \mathbb{Q}) \cdot \lambda^\mathrm{sd}_\theta \ ,
    \end{equation*}
    where $\hat{\mathbb{M}}^\mathrm{mon} (K; \mathbb{Q})$
    is the ring of monodromic motives
    defined in \cref{para-monodromic-motives}.
    We define a product~$*$ on~$\Lambda_\mathcal{X}$,
    and a $\Lambda_\mathcal{X}$-module structure~$\diamond$ on~$\Lambda_\mathcal{X}^\mathrm{sd}$,
    by setting
    \begin{equation}
        \label{eq-lambda-mult}
        \lambda_\alpha * \lambda_\beta
        =
        \frac
        {\mathbb{L}^{\vdim \mathcal{X}_{\alpha, \beta}^+ / 2}}
        {\mathbb{L}^{1/2} - \mathbb{L}^{-1/2}}
        \cdot \lambda_{\alpha + \beta} \ ,
        \qquad
        \lambda_\alpha \diamond \lambda^\mathrm{sd}_\theta
        =
        \frac
        {\mathbb{L}^{\vdim \mathcal{X}_{\alpha, \theta}^{\smash{\mathrm{sd}, +}} / 2}}
        {\mathbb{L}^{1/2} - \mathbb{L}^{-1/2}}
        \cdot \lambda_{\alpha + \theta + \smash{\alpha^\vee}}^\mathrm{sd}
    \end{equation}
    for $\alpha, \beta \in \uppi_0 (\mathcal{X})$
    and $\theta \in \uppi_0 (\mathcal{X}^\mathrm{sd})$.
    The associativity of these operations follow from the relations
    \begin{alignat*}{2}
        \vdim \mathcal{X}_{\alpha, \beta}^+ +
        \vdim \mathcal{X}_{\alpha + \beta, \gamma}^+
        & =
        \vdim \mathcal{X}_{\alpha, \beta, \gamma}^+
        && =
        \vdim \mathcal{X}_{\alpha, \beta + \gamma}^+ +
        \vdim \mathcal{X}_{\beta, \gamma}^+ \ ,
        \\
        \vdim \mathcal{X}_{\alpha, \beta}^+ +
        \vdim \mathcal{X}_{\alpha + \beta, \theta}^{\smash{\mathrm{sd}, +}}
        & =
        \vdim \mathcal{X}_{\alpha, \beta, \theta}^{\smash{\mathrm{sd}, +}}
        && =
        \vdim \mathcal{X}_{\alpha, \beta + \theta + \smash{\beta^\vee}}^{\smash{\mathrm{sd}, +}} +
        \vdim \mathcal{X}_{\beta, \theta}^{\smash{\mathrm{sd}, +}} \ ,
    \end{alignat*}
    which follow from the derived versions of the associativity diagrams
    \cref{eq-cd-hall-assoc,eq-cd-hall-mod-assoc}.
    The algebra $\Lambda_\mathcal{X}$ is often called
    the \emph{quantum torus} in the literature,
    such as in \textcite[\S6.2]{kontsevich-soibelman-motivic-dt}.

    The map $\lambda_\alpha \mapsto \lambda_{\smash{\alpha^\vee}}$
    defines a contravariant involution~$(-)^\vee$
    of~$\Lambda_\mathcal{X}$.
    We also write
    $a \heart m = a \diamond m - a^\vee \diamond m$
    for $a \in \Lambda_\mathcal{X}$ and $m \in \Lambda_\mathcal{X}^\mathrm{sd}$,
    as in \cref{para-twisted-modules},
    which gives~$\Lambda_\mathcal{X}^\mathrm{sd}$ the structure of
    a twisted module over the involutive Lie algebra~$\Lambda_\mathcal{X}$,
    with the commutator Lie bracket.

    We also define the numerical versions
    \begin{equation*}
        \bar{\Lambda}_\mathcal{X} =
        \bigoplus_{\alpha \in \uppi_0 (\mathcal{X})}
        \mathbb{Q} \cdot \bar{\lambda}_\alpha \ ,
        \qquad
        \bar{\Lambda}_\mathcal{X}^\mathrm{sd} =
        \bigoplus_{\theta \in \uppi_0 (\mathcal{X}^\mathrm{sd})}
        \mathbb{Q} \cdot \bar{\lambda}^\mathrm{sd}_\theta \ ,
    \end{equation*}
    which are no longer equipped with algebra structures,
    but have a Lie bracket and a twisted module operation~$\heart$,
    respectively, given by
    \begin{align}
        \label{eq-bar-lambda-mult}
        [\bar{\lambda}_\alpha, \bar{\lambda}_\beta]
        & = (-1)^{1 + \vdim \mathcal{X}_{\alpha, \beta}^+}
        \cdot \vdim \mathcal{X}_{\alpha, \beta}^+
        \cdot \bar{\lambda}_{\alpha + \beta} \ ,
        \\
        \label{eq-bar-lambda-sd-mult}
        \bar{\lambda}_\alpha \heart \bar{\lambda}^\mathrm{sd}_\theta
        & = (-1)^{1 + \vdim \mathcal{X}_{\alpha, \theta}^{\smash{\mathrm{sd}, +}}}
        \cdot \vdim \mathcal{X}_{\alpha, \theta}^{\smash{\mathrm{sd}, +}}
        \cdot \bar{\lambda}_{\alpha + \theta + \smash{\alpha^\vee}}^\mathrm{sd} \ .
    \end{align}
    By \cite[Lemma~3.1.7]{bu-integral},
    we have
    $\vdim \mathcal{X}_{\beta, \alpha}^+ =
    -{\vdim \mathcal{X}_{\alpha, \beta}^+}$
    and
    $\vdim \mathcal{X}_{\smash{\alpha^\vee}, \theta}^{\smash{\mathrm{sd}, +}} =
    -{\vdim \mathcal{X}_{\alpha, \theta}^{\smash{\mathrm{sd}, +}}}$,
    establishing
    \cref{eq-bar-lambda-mult,eq-bar-lambda-sd-mult}
    as limits of \cref{eq-lambda-mult}
    as $\mathbb{L}^{1/2} \to -1$.
\end{para}

\begin{para}[Coefficients]
    \label{para-coef-l}
    We can now define the coefficients
    $L (\ldots), L^{\smash{\mathrm{sd}}} (\ldots)$, etc.,
    which appear in~\cref{eq-wcf-dt}.

    For $\alpha_1, \dotsc, \alpha_n \in \uppi_0 (\mathcal{X})$,
    we record the coefficients of the Lie brackets in
    $\Lambda_\mathcal{X}$ and $\bar{\Lambda}_\mathcal{X}$ as
    \begin{align}
        [[ \dotsc [ \lambda_{\alpha_1}, \lambda_{\alpha_2} ], \dotsc ], \lambda_{\alpha_n} ]
        & =
        L (\alpha_1, \dotsc, \alpha_n) \cdot
        \lambda_{\alpha_1 + \cdots + \alpha_n} \ ,
        \\
        [[ \dotsc [ \bar{\lambda}_{\alpha_1}, \bar{\lambda}_{\alpha_2} ], \dotsc ], \bar{\lambda}_{\alpha_n} ]
        & =
        \ell (\alpha_1, \dotsc, \alpha_n) \cdot
        \bar{\lambda}_{\alpha_1 + \cdots + \alpha_n} \ ,
    \end{align}
    where $L (\alpha_1, \dotsc, \alpha_n) \in \mathbb{Z} [\mathbb{L}^{\pm 1/2}]$
    and $\ell (\alpha_1, \dotsc, \alpha_n) \in \mathbb{Z}$.

    Similarly, for $\alpha_{1,1}, \dotsc, \alpha_{1,m_1}; \dotsc;
    \alpha_{n,1}, \dotsc, \alpha_{n,m_n} \in \uppi_0 (\mathcal{X})$
    and $\rho \in \uppi_0 (\mathcal{X}^\mathrm{sd})$,
    we also record the coefficients in
    \begin{align}
        &{}
        [[ \dotsc [ \lambda_{\alpha_{1,1}}, \lambda_{\alpha_{1,2}} ], \dotsc ], \lambda_{\alpha_{1,m_1}} ] \heart
        \cdots \heart
        [[ \dotsc [ \lambda_{\alpha_{n,1}}, \lambda_{\alpha_{n,2}} ], \dotsc ], \lambda_{\alpha_{n,m_n}} ] \heart
        \lambda^\mathrm{sd}_\rho
        \notag \\
        & \hspace{4em}
        = L^\mathrm{sd} (\alpha_{1,1}, \dotsc, \alpha_{1,m_1}; \dotsc;
        \alpha_{n,1}, \dotsc, \alpha_{n,m_n}; \rho) \cdot
        \lambda_{\smash{\alpha_{1,1} + \alpha_{1,1}^\vee + \cdots + \alpha_{n, m_n} + \alpha_{n, m_n}^\vee + \rho}}^\mathrm{sd} \ ,
        \\[1ex]
        &{}
        [[ \dotsc [ \bar{\lambda}_{\alpha_{1,1}}, \bar{\lambda}_{\alpha_{1,2}} ], \dotsc ], \bar{\lambda}_{\alpha_{1,m_1}} ] \heart
        \cdots \heart
        [[ \dotsc [ \bar{\lambda}_{\alpha_{n,1}}, \bar{\lambda}_{\alpha_{n,2}} ], \dotsc ], \bar{\lambda}_{\alpha_{n,m_n}} ] \heart
        \bar{\lambda}^\mathrm{sd}_\rho
        \notag \\
        & \hspace{4em}
        = \ell^\mathrm{sd} (\alpha_{1,1}, \dotsc, \alpha_{1,m_1}; \dotsc;
        \alpha_{n,1}, \dotsc, \alpha_{n,m_n}; \rho) \cdot
        \bar{\lambda}_{\smash{\alpha_{1,1} + \alpha_{1,1}^\vee + \cdots + \alpha_{n, m_n} + \alpha_{n, m_n}^\vee + \rho}}^\mathrm{sd} \ ,
    \end{align}
    where $L^\mathrm{sd} (\ldots) \in \mathbb{Z} [\mathbb{L}^{\pm 1/2}]$
    and $\ell^\mathrm{sd} (\ldots) \in \mathbb{Z}$.

    These coefficients only depend on the numbers
    $\vdim \mathcal{X}_{\alpha, \beta}^+$ and
    $\vdim \mathcal{X}_{\alpha, \theta}^{\smash{\mathrm{sd}, +}}$
    for $\alpha, \beta \in \uppi_0 (\mathcal{X})$
    and $\theta \in \uppi_0 (\mathcal{X}^\mathrm{sd})$.
    They have straightforward explicit expressions,
    which we omit.

    We have the relations
    $\ell (\ldots) = L (\ldots) |_{\mathbb{L}^{1/2} = -1}$
    and $\ell^\mathrm{sd} (\ldots) =
    L^\mathrm{sd} (\ldots) |_{\mathbb{L}^{1/2} = -1}$.
    Also, $L (\ldots)$ and $L^\mathrm{sd} (\ldots)$
    are symmetric Laurent polynomials in~$\mathbb{L}^{1/2}$,
    in that they are invariant under the transformation
    $\mathbb{L}^{1/2} \mapsto \mathbb{L}^{-1/2}$.
\end{para}

\begin{para}[The motivic integral identity]
    \label{para-mot-integral-identity}
    A crucial ingredient in proving
    wall-crossing formulae for DT invariants
    is the \emph{motivic integral identity}
    for the motivic Behrend function,
    first conjectured by
    \textcite[Conjecture~4]{kontsevich-soibelman-motivic-dt}
    in the linear case,
    proved by \textcite{le-2015-integral} in that case,
    and proved by \textcite[Theorem~4.2.2]{bu-integral}
    in general.

    Suppose that we are given a self-dual orientation data
    $(o_\mathcal{X}, o_{\smash{\mathcal{X}^\mathrm{sd}}})$ on~$\mathcal{X}$.
    The motivic integral identity states, in this case, that we have
    \begin{alignat}{2}
        \label{eq-integral-identity-linear}
        \nu_\mathcal{X}^\mathrm{mot} \boxtimes \nu_\mathcal{X}^\mathrm{mot}
        & =
        \mathbb{L}^{-{\vdim \mathcal{X}_{\alpha, \beta}^+} / 2} \cdot
        \mathrm{gr}_! \circ \mathrm{ev}_1^* (\nu_\mathcal{X}^\mathrm{mot})
        && \quad
        \text{ in }
        \hat{\mathbb{M}}^\mathrm{mon} (\mathcal{X}_{\alpha} \times \mathcal{X}_\beta) \ ,
        \\
        \label{eq-integral-identity-sd}
        \nu_\mathcal{X}^\mathrm{mot} \boxtimes \nu_{\subXsd}^\mathrm{mot}
        & =
        \mathbb{L}^{-{\vdim \mathcal{X}_{\alpha, \theta}^{\smash{\mathrm{sd}, +}}} / 2} \cdot
        \mathrm{gr}_! \circ \mathrm{ev}_1^* (\nu_{\subXsd}^\mathrm{mot})
        && \quad
        \text{ in }
        \hat{\mathbb{M}}^\mathrm{mon} (\mathcal{X}_{\alpha} \times \mathcal{X}^\mathrm{sd}_\theta) \ ,
    \end{alignat}
    where
    $\alpha, \beta \in \uppi_0 (\mathcal{X})$
    and
    $\theta \in \uppi_0 (\mathcal{X}^\mathrm{sd})$,
    and the compositions are through
    $\hat{\mathbb{M}}^\mathrm{mon} (\mathcal{X}_{\alpha, \beta}^+)$
    and
    $\hat{\mathbb{M}}^\mathrm{mon} (\mathcal{X}_{\alpha, \theta}^{\smash{\mathrm{sd}, +}})$,
    respectively.
    These identities imply the relations
    \begin{align}
        \label{eq-integral-homo-linear}
        \biggl(
            \int_{\mathcal{X}_\alpha} a \cdot \nu_\mathcal{X}^\mathrm{mot}
        \biggr) \cdot \biggl(
            \int_{\mathcal{X}_\beta} b \cdot \nu_\mathcal{X}^\mathrm{mot}
        \biggr)
        & =
        \mathbb{L}^{-{\vdim \mathcal{X}_{\alpha, \beta}^+} / 2} \cdot
        \int_{\mathcal{X}_{\alpha + \beta}} {}
        (a * b) \cdot \nu_\mathcal{X}^\mathrm{mot} \ ,
        \\
        \label{eq-integral-homo-sd}
        \biggl(
            \int_{\mathcal{X}_\alpha} a \cdot \nu_{\subXsd}^\mathrm{mot}
        \biggr) \cdot \biggl(
            \int_{\mathcal{X}^\mathrm{sd}_\theta} m \cdot \nu_{\subXsd}^\mathrm{mot}
        \biggr)
        & =
        \mathbb{L}^{-{\vdim \mathcal{X}_{\alpha, \theta}^{\smash{\mathrm{sd}, +}}} / 2} \cdot
        \int_{\mathcal{X}_{\alpha + \theta + \smash{\alpha^\vee}}^\mathrm{sd}} {}
        (a \diamond m) \cdot \nu_{\subXsd}^\mathrm{mot} \ ,
    \end{align}
    where $a \in \mathbb{M}_\mathrm{qc} (\mathcal{X}_\alpha; \mathbb{Q})$,
    $b \in \mathbb{M}_\mathrm{qc} (\mathcal{X}_\beta; \mathbb{Q})$,
    and $m \in \mathbb{M}_\mathrm{qc} (\mathcal{X}^\mathrm{sd}_\theta; \mathbb{Q})$.
    These follow from identifying both sides
    of each relation with the integrals
    \begin{equation*}
        \mathbb{L}^{-{\vdim \mathcal{X}_{\alpha, \beta}^+} / 2} \cdot
        \int_{\mathcal{X}_{\alpha, \beta}^+} {}
        \mathrm{gr}^* (a \boxtimes b) \cdot
        \mathrm{ev}_1^* (\nu_\mathcal{X}^\mathrm{mot}) \ ,
        \qquad
        \mathbb{L}^{-{\vdim \mathcal{X}_{\alpha, \theta}^{\smash{\mathrm{sd}, +}}} / 2} \cdot
        \int_{\mathcal{X}_{\alpha, \theta}^{\smash{\mathrm{sd}, +}}} {}
        \mathrm{gr}^* (a \boxtimes m) \cdot
        \mathrm{ev}_1^* (\nu_{\subXsd}^\mathrm{mot}) \ ,
    \end{equation*}
    respectively,
    using the projection formula~\cref{eq-motive-projection}.
    The relation~\cref{eq-integral-homo-linear}
    was first described by
    \textcite[Theorem~8]{kontsevich-soibelman-motivic-dt}.
\end{para}

\begin{para}[The numeric integral identity]
    Using the numeric version of the motivic integral identity,
    proved by \textcite[Theorem~4.3.3]{bu-integral},
    we can also obtain numerical versions of the integral relations
    \crefrange{eq-integral-homo-linear}{eq-integral-homo-sd},
    \begin{align}
        \label{eq-integral-homo-linear-num}
        & \int_{\mathcal{X}_{\alpha + \beta}} {}
        (1 - \mathbb{L}) \cdot [a, b] \cdot \nu_\mathcal{X} \, d \chi
        =
        (-1)^{1 + \vdim \mathcal{X}_{\alpha, \beta}^+} \cdot
        \vdim \mathcal{X}_{\alpha, \beta}^+ \cdot {}
        \notag \\[-1.5ex]
        & \hspace{6em}
        \biggl(
            \int_{\mathcal{X}_\alpha} {}
            (1 - \mathbb{L}) \cdot a \cdot \nu_\mathcal{X} \, d \chi
        \biggr) \cdot \biggl(
            \int_{\mathcal{X}_\beta} {}
            (1 - \mathbb{L}) \cdot b \cdot \nu_\mathcal{X} \, d \chi
        \biggr) \ ,
        \\
        \label{eq-integral-homo-sd-num}
        & \int_{\mathcal{X}_{\alpha + \theta + \smash{\alpha^\vee}}^\mathrm{sd}} {}
        (a \heart m) \cdot \nu_{\subXsd} \, d \chi
        =
        (-1)^{1 + \vdim \mathcal{X}_{\alpha, \theta}^{\smash{\mathrm{sd}, +}}} \cdot
        \vdim \mathcal{X}_{\alpha, \theta}^{\smash{\mathrm{sd}, +}} \cdot {}
        \notag \\[-1.5ex]
        & \hspace{6em}
        \biggl(
            \int_{\mathcal{X}_\alpha} {}
            (1 - \mathbb{L}) \cdot a \cdot \nu_\mathcal{X} \, d \chi
        \biggr) \cdot \biggl(
            \int_{\mathcal{X}^\mathrm{sd}_\theta} m \cdot \nu_{\subXsd} \, d \chi
        \biggr) \ ,
    \end{align}
    provided that the motives~$a, b, m$ are chosen so that
    the integrals on the right-hand sides are finite,
    that is, they lie in $\hat{\mathbb{M}}^{\mathrm{mon, reg}} (K; A)$
    as in \cref{para-monodromic-motives}
    before taking the Euler characteristics.
    These identities do not require orientations
    on~$\mathcal{X}$ or~$\mathcal{X}^\mathrm{sd}$.
    The identity~\cref{eq-integral-homo-linear-num}
    was proved by
    \textcite[Theorem~5.14]{joyce-song-2012}
    in the setting of Calabi--Yau threefolds.

    To prove them,
    we use a similar argument as in
    \cref{para-mot-integral-identity}.
    Namely, for \cref{eq-integral-homo-linear-num},
    we identify the left-hand side with
    \begin{align}
        &
        \chi \biggl(
            (1 - \mathbb{L})^2 \cdot \biggl(
                - \int_{\mathbb{P} (\mathcal{X}_{\alpha, \beta}^+)} {}
                \mathrm{gr}^* (a \boxtimes b) \cdot
                \mathrm{ev}_1^* (\nu_\mathcal{X})
                + \int_{\mathbb{P} (\mathcal{X}_{\beta, \alpha}^+)} {}
                \bar{\mathrm{gr}}^* (a \boxtimes b) \cdot
                \bar{\mathrm{ev}}_1^* (\nu_\mathcal{X})
            \biggr)
        \biggr)
        \notag \\[-1ex]
        & \hspace{2em}
        {} + \int_{\mathcal{X}_\alpha \times \mathcal{X}_\beta} {}
        (1 - \mathbb{L}) \cdot
        (a \boxtimes b) \cdot
        \bigl( \mathbb{L}^{-h^1 (\mathbb{L}_{\mathrm{gr}})}
        - \mathbb{L}^{-h^1 (\mathbb{L}_{\bar{\mathrm{gr}}})} \bigr) \cdot
        {\oplus}^* (\nu_\mathcal{X})
        \, d \chi \ ,
        \label{eq-integral-homo-linear-num-1}
    \end{align}
    where $\mathbb{P} (\mathcal{X}_{\alpha, \beta}^+)
    = (\mathcal{X}_{\alpha, \beta}^+ \setminus
    \mathrm{sf} (\mathcal{X}_\alpha \times \mathcal{X}_\beta))
    / \mathbb{G}_\mathrm{m}$,
    with the~$\mathbb{G}_\mathrm{m}$-action
    given by choosing an identification of
    $\mathcal{X}_{\alpha, \beta}^+$
    with a component of $\mathrm{Filt} (\mathcal{X})$,
    and $\mathbb{P} (\mathcal{X}_{\beta, \alpha}^+)$
    is defined similarly, using the opposite component.
    We denote by $\bar{\mathrm{gr}}$, $\bar{\mathrm{ev}}_1$
    the maps~$\mathrm{gr}$, $\mathrm{ev}_1$
    for~$\mathcal{X}_{\beta, \alpha}^+$,
    and by $\mathbb{L}_{\mathrm{gr}}$
    the relative cotangent complex of~$\mathcal{X}_{\alpha, \beta}^+$
    over $\mathcal{X}_\alpha \times \mathcal{X}_\beta$.
    We regard $h^1 (\mathbb{L}_{\mathrm{gr}}) =
    \dim \mathrm{H}^1 (\mathbb{L}_{\mathrm{gr}})$
    as a constructible function on~$\mathcal{X}_{\alpha, \beta}^+$,
    which can be pulled back to $\mathcal{X}_\alpha \times \mathcal{X}_\beta$.
    The factors
    $\mathbb{L}^{\smash{-h^1 (\mathbb{L}_{\mathrm{gr}})}}$
    and
    $\mathbb{L}^{\smash{-h^1 (\mathbb{L}_{\bar{\mathrm{gr}}})}}$
    are due to the difference of stabilizer groups
    in $\mathcal{X}_{\alpha, \beta}^+$ and $\mathcal{X}_\alpha \times \mathcal{X}_\beta$;
    see \cite[\S 4.3.4]{bu-integral} for details.
    Applying \cite[(4.3.3.2)]{bu-integral},
    \cref{eq-integral-homo-linear-num-1} becomes
    \begin{equation}
        \label{eq-integral-homo-linear-num-2}
        \int_{\mathcal{X}_\alpha \times \mathcal{X}_\beta} {}
        (1 - \mathbb{L})^2 \cdot
        (a \boxtimes b) \cdot
        \bigl(
            h^1 (\mathbb{L}_{\mathrm{gr}})
            - h^0 (\mathbb{L}_{\mathrm{gr}})
            + h^0 (\mathbb{L}_{\bar{\mathrm{gr}}})
            - h^1 (\mathbb{L}_{\bar{\mathrm{gr}}})
        \bigr) \cdot
        {\oplus}^* (\nu_\mathcal{X})
        \, d \chi \ ,
    \end{equation}
    where we also replaced
    $\mathbb{L}^{\smash{-h^1 (\mathbb{L}_{\mathrm{gr}})}}
    - \mathbb{L}^{\smash{-h^1 (\mathbb{L}_{\bar{\mathrm{gr}}})}}$
    by
    $(1 - \mathbb{L}) \cdot
    (h^1 (\mathbb{L}_{\mathrm{gr}}) - h^1 (\mathbb{L}_{\bar{\mathrm{gr}}}))$,
    as they are equal modulo~$(1 - \mathbb{L})^2$,
    so this will not affect the integral.
    By \cite[Lemma~3.1.7]{bu-integral},
    the alternating sum in~\cref{eq-integral-homo-linear-num-2}
    is equal to
    $-{\vdim \mathcal{X}_{\alpha, \beta}^+}$.
    Finally, by \cite[(4.3.3.1)]{bu-integral},
    we have $\oplus^* (\nu_\mathcal{X}) =
    (-1)^{\vdim \mathcal{X}_{\alpha, \beta}^+} \cdot
    (\nu_\mathcal{X} \boxtimes \nu_\mathcal{X})$,
    which identifies~\cref{eq-integral-homo-linear-num-2}
    with the right-hand side of~\cref{eq-integral-homo-linear-num}.

    The identity~\cref{eq-integral-homo-sd-num}
    can be proved analogously.
\end{para}

\begin{para}[Proof of \texorpdfstring{\cref{thm-wcf-dt}}{Theorem \ref*{thm-wcf-dt}}]
    \label{para-pf-wcf-dt}
    Consider the integration maps
    \begin{alignat*}{2}
        (\mathbb{L}^{1/2} - \mathbb{L}^{-1/2}) \cdot
        \int_\mathcal{X} {} (-) \cdot \nu_\mathcal{X}^\mathrm{mot}
        & \colon \
        & \mathbb{M}_\mathrm{qc} (\mathcal{X}; \mathbb{Q})
        & \longrightarrow \Lambda_\mathcal{X} \ ,
        \\
        \int_{\mathcal{X^\mathrm{sd}}} {} (-) \cdot \nu_{\subXsd}^\mathrm{mot}
        & \colon \
        & \mathbb{M}_\mathrm{qc} (\mathcal{X}^\mathrm{sd}; \mathbb{Q})
        & \longrightarrow \Lambda_\mathcal{X}^\mathrm{sd} \ ,
    \end{alignat*}
    where the generators~$\lambda_\alpha$
    and~$\lambda^\mathrm{sd}_\theta$
    record which components the motives are supported on.
    The relations
    \crefrange{eq-integral-homo-linear}{eq-integral-homo-sd}
    imply that these maps are algebra and module homomorphisms.

    Similarly, the relations
    \crefrange{eq-integral-homo-linear-num}{eq-integral-homo-sd-num}
    imply that the integration maps
    \begin{alignat*}{2}
        \int_\mathcal{X} {} (1 - \mathbb{L}) \cdot (-) \cdot \nu_\mathcal{X} \, d \chi
        & \colon \
        & \mathbb{M}_\mathrm{qc}^*
        (\mathcal{X}; \mathbb{Q})
        & \longrightarrow \bar{\Lambda}_\mathcal{X} \ ,
        \\
        \int_{\mathcal{X^\mathrm{sd}}} {} (-) \cdot \nu_{\subXsd}
        & \colon \
        & \mathbb{M}_\mathrm{qc}^*
        (\mathcal{X}^\mathrm{sd}; \mathbb{Q})
        & \longrightarrow \bar{\Lambda}_\mathcal{X}^\mathrm{sd} \ ,
    \end{alignat*}
    are Lie algebra and twisted module homomorphisms,
    where the superscripts~$*$ indicate subspaces of motives
    for which the integrals are finite.
    It follows from
    \crefrange{eq-integral-homo-linear-num}{eq-integral-homo-sd-num}
    that these subspaces are a Lie subalgebra
    and a sub-twisted module for this subalgebra, respectively.

    The theorem is now a direct consequence of
    \cref{thm-wcf-epsilon-antisym},
    by applying the above integration homomorphisms to the relations
    \crefrange{eq-wcf-epsilon-lie}{eq-wcf-epsilon-lie-sd}.
\end{para}

\section{Applications}

\subsection{Self-dual quivers}
\label{subsec-quiver-dt}

\begin{para}
    We apply our theory to study DT invariants counting
    \emph{self-dual representations} of a \emph{self-dual quiver}.
    These are an analogue of orthogonal and symplectic principal bundles on a variety,
    similar to how the usual quiver representations are analogous to
    vector bundles or coherent sheaves on varieties.

    Self-dual quivers were first introduced by
    \textcite{derksen-weyman-2002}
    as a special case of \emph{$G$-quivers}
    for $G = \mathrm{O} (n)$ or~$\mathrm{Sp} (2n)$,
    and studied by
    \textcite{young-2015-self-dual,young-2016-hall-module,young-2020-quiver}
    in the context of DT theory.

    Throughout, we fix an algebraically closed field~$K$
    of characteristic zero.
\end{para}

\begin{para}[Self-dual quivers]
    To fix notations, recall that a \emph{quiver}
    is a quadruple $Q = (Q_0, Q_1, s, t)$,
    where~$Q_0$ and~$Q_1$ are finite sets,
    thought of as the sets of vertices and edges,
    and $s, t \colon Q_1 \to Q_0$ are the source and target maps.

    For a quiver~$Q$, a \emph{self-dual structure} on~$Q$
    consists of the following data:
    \begin{enumerate}
        \item
            A \emph{contravariant involution}
            \begin{equation*}
                (-)^\vee \colon Q \longsimto Q^\mathrm{op} \ ,
            \end{equation*}
            where~$Q^\mathrm{op} = (Q_0, Q_1, t, s)$
            is the opposite quiver of~$Q$,
            such that $(-)^{\vee \vee} = \mathrm{id}$.

        \item
            Choices of signs
            \begin{equation*}
                u \colon Q_0 \longrightarrow \{ \pm 1 \} \ ,
                \qquad
                v \colon Q_1 \longrightarrow \{ \pm 1 \} \ ,
            \end{equation*}
            such that $u (i) = u (i^\vee)$ for all $i \in Q_0$,
            and $v (a) \, v (a^\vee) = u (s (a)) \, u (t (a))$
            for all $a \in Q_1$.
    \end{enumerate}
    In this case, the $K$-linear abelian category $\mathsf{Mod} (K Q)$
    of finite-dimensional representations of~$Q$
    admits a self-dual structure, defined as follows.
    For a representation~$E$ of~$Q$,
    write~$E_i$ for the vector space at $i \in Q_0$
    and $e_a \colon E_{s (a)} \to E_{t (a)}$ the linear map
    for the edge $a \in Q_1$.
    Define the \emph{dual representation}~$E^\vee$ by
    assigning the vector space~$(E_{i^\vee})^\vee$ to the vertex~$i$,
    and the linear map~$v (a) \cdot (e_{a^\vee})^\vee$ to the edge~$a$.
    Then, identify~$E^{\vee \vee}$ with~$E$
    using the sign~$u (i)$ at each vertex~$i$.
    As in \cref{para-sd-cat},
    we have the groupoid $\mathsf{Mod} (K Q)^\mathrm{sd}$
    of \emph{self-dual representations} of~$Q$.
\end{para}

\begin{para}[Moduli stacks]
    Let~$Q$ be a quiver, and let~$\mathcal{X}_Q$
    be the moduli stack of representations of~$Q$ over~$K$.
    Explicitly, we have
    \begin{equation}
        \label{eq-moduli-quiver}
        \mathcal{X}_Q = \coprod_{\alpha \in \mathbb{N}^{Q_0}}
        V_\alpha / G_\alpha \ ,
    \end{equation}
    where
    $V_\alpha = \bigoplus_{a \in Q_1}
    \mathrm{Hom} (K^{\alpha_{s (a)}}, K^{\alpha_{t (a)}})$,
    and $G_\alpha = \prod_{i \in Q_0} \mathrm{GL} (\alpha_i)$.

    If~$Q$ is equipped with a self-dual structure,
    the self-dual structure on~$\mathsf{Mod} (K Q)$
    extends to an involution of~$\mathcal{X}_Q$,
    establishing it as a self-dual linear stack.
    The homotopy fixed locus~$\mathcal{X}_Q^\mathrm{sd}$
    can be seen as the moduli stack of self-dual representations of~$Q$.
    Explicitly, we have
    \begin{equation}
        \label{eq-moduli-quiver-sd}
        \mathcal{X}_Q^\mathrm{sd} \simeq
        \coprod_{\theta \in (\mathbb{N}^{Q_0})^\mathrm{sd}}
        V^\mathrm{sd}_\theta / G^\mathrm{sd}_\theta \ ,
    \end{equation}
    where $(\mathbb{N}^{Q_0})^\mathrm{sd} \subset \mathbb{N}^{Q_0}$
    is the subset of dimension vectors~$\theta$ such that
    $\theta_i = \theta_{i^\vee}$ for all $i \in Q_0$,
    and~$\theta_i$ is even if $i = i^\vee$ and $u (i) = -1$.
    The vector space~$V^\mathrm{sd}_\theta$
    and the group~$G^\mathrm{sd}_\theta$ are given by
    \begin{align}
        \label{eq-quiver-vsd}
        V^\mathrm{sd}_\theta & =
        \prod_{a \in Q_1^{\smash{\circ}} / \mathbb{Z}_2} {}
        \mathrm{Hom} (K^{\theta_{s (a)}}, K^{\theta_{t (a)}}) \times
        \prod_{a \in Q_1^{\smash{+}}} {}
        \mathrm{Sym}^2 (K^{\theta_{t (a)}}) \times
        \prod_{a \in Q_1^{\smash{-}}} {}
        {\wedge}^2 (K^{\theta_{t (a)}}) \ , 
        \\
        \label{eq-quiver-gsd}
        G^\mathrm{sd}_\theta & =
        \prod_{i \in Q_0^{\smash{\circ}} / \mathbb{Z}_2} {}
        \mathrm{GL} (\theta_i) \times
        \prod_{i \in Q_0^{\smash{+}}} {}
        \mathrm{O} (\theta_i) \times
        \prod_{i \in Q_0^{\smash{-}}} {}
        \mathrm{Sp} (\theta_i) \ ,
    \end{align}
    where~$Q_0^\circ$ is the set of vertices~$i$ with $i \neq i^\vee$,
    and~$Q_0^\pm$ the sets of vertices~$i$ with $i = i^\vee$ and $u (i) = \pm 1$.
    Similarly, $Q_1^\circ$ is the set of edges~$a$ with $a \neq a^\vee$,
    and $Q_1^\pm$ the sets of edges~$a$ with $a = a^\vee$ and
    $v (a) \, u (t (a)) = \pm 1$.
\end{para}

\begin{para}[Potentials]
    \label{para-quiver-potentials}
    Recall that a \emph{potential} on a quiver~$Q$
    is an element $W \in K Q / [K Q, K Q]$,
    where~$K Q$ is the path algebra of~$Q$,
    and $[K Q, K Q] \subset K Q$ is the $K$-linear subspace spanned by commutators.
    Such an element can be seen as
    a formal linear combination of cyclic paths in~$Q$,
    and there is a trace function
    $\varphi_W = \mathrm{tr} (W) \colon \mathcal{X}_Q \to \mathbb{A}^1$
    defined by taking traces along cyclic paths in a representation.
    The derived critical locus
    \begin{equation*}
        \mathcal{X}_{Q, W} = \mathrm{Crit} (\varphi_W) \subset \mathcal{X}_Q
    \end{equation*}
    admits a natural $(-1)$-shifted symplectic structure,
    and is a $(-1)$-shifted symplectic linear stack,
    equipped with a canonical orientation data.

    When~$Q$ is equipped with a self-dual structure,
    the potential~$W$ is said to be \emph{self-dual}
    if it is invariant under the involution of~$K Q$
    sending a path to its dual path, multiplied by the product
    of the signs assigned to the edges in the path.
    In this case, the function~$\varphi_W$ is~$\mathbb{Z}_2$-invariant,
    so~$\mathcal{X}_{Q, W}$ is a self-dual linear stack,
    and the fixed locus~$\mathcal{X}_{Q, W}^\mathrm{sd}$
    admits a natural $(-1)$-shifted symplectic derived structure
    and a canonical self-dual orientation data.

    When the potential~$W$ is zero,
    $\mathcal{X}_{Q, 0} \simeq \mathrm{T}^* [-1] \, \mathcal{X}_Q$
    is the $(-1)$-shifted cotangent stack of the smooth stack~$\mathcal{X}_Q$,
    as in \cref{para-dt-smooth-stacks},
    and in particular,
    its classical truncation coincides with~$\mathcal{X}_Q$.
\end{para}

\begin{para}[Stability conditions]
    A \emph{slope function} on a quiver~$Q$ is a map
    $\mu \colon Q_0 \to \mathbb{Q}$.
    Given such a map,
    the \emph{slope} of a dimension vector
    $\alpha \in \mathbb{N}^{Q_0} \setminus \{ 0 \}$
    is the number
    \begin{equation*}
        \tau (\alpha) = \frac{\sum_{i \in Q_0} \alpha_i \, \mu (i)}{\sum_{i \in Q_0} \alpha_i} \ .
    \end{equation*}
    This defines a stability condition
    on the linear stack~$\mathcal{X}_Q$
    in the sense of \cref{para-linear-stack-stability},
    where the $\Theta$-stratification can be constructed from
    \textcite[Theorem~2.6.3]{ibanez-nunez-filtrations}.

    If~$Q$ is equipped with a self-dual structure,
    then a slope function~$\mu$ is said to be \emph{self-dual}
    if $\mu (i^\vee) = -\mu (i)$ for all $i \in Q_0$.
    In this case, the corresponding stability condition
    on~$\mathsf{Mod} (K Q)$ is self-dual,
    and the corresponding stability condition
    on~$\mathcal{X}_Q$ is also self-dual.

    The above also applies to quivers with potentials.
    For a potential~$W$ on a quiver~$Q$,
    any slope function~$\tau$ on~$Q$
    defines a stability condition on~$\mathcal{X}_{Q, W}$,
    where the existence of a $\Theta$-stratification
    follows from \cite[Theorem~2.6.3]{ibanez-nunez-filtrations}.
    For a self-dual potential~$W$ on a self-dual quiver~$Q$,
    a self-dual slope function~$\tau$ on~$Q$
    defines a self-dual stability condition on~$\mathcal{X}_{Q, W}$.
\end{para}

\begin{para}[DT invariants]
    \label{para-quiver-dt}
    For a quiver~$Q$, a potential~$W$,
    a slope function~$\tau$ on~$Q$,
    and a dimension vector~$\alpha \in \mathbb{N}^{Q_0} \setminus \{ 0 \}$,
    we have the DT invariants
    \begin{equation*}
        \mathrm{DT}_\alpha (\tau) \in \mathbb{Q} \ , \qquad
        \mathrm{DT}_\alpha^\mathrm{mot} (\tau)
        \in \hat{\mathbb{M}}^\mathrm{mon} (K; \mathbb{Q}) \ ,
    \end{equation*}
    defined as in \cref{para-dt-linear,para-motivic-dt},
    for the $(-1)$-shifted symplectic linear stack~$\mathcal{X}_{Q, W}$.
    These invariants were studied by
    \textcite{joyce-song-2012},
    \textcite{kontsevich-soibelman-motivic-dt},
    and others.

    When~$Q$ is equipped with a self-dual structure and $W, \tau$ are self-dual,
    we have the self-dual DT invariants
    \begin{equation*}
        \mathrm{DT}_\theta (\tau) \in \mathbb{Q} \ , \qquad
        \mathrm{DT}_\theta^\mathrm{mot} (\tau)
        \in \hat{\mathbb{M}}^\mathrm{mon} (K; \mathbb{Q}) \ ,
    \end{equation*}
    defined as in \cref{para-dt-sd,para-motivic-dt}
    for the self-dual $(-1)$-shifted symplectic linear stack~$\mathcal{X}_{Q, W}$.
    These are new constructions in this paper.

    When the potential~$W$ is zero,
    we have $\mathcal{X}_{Q, 0} \simeq \mathrm{T}^* [-1] \, \mathcal{X}_Q$
    as in \cref{para-quiver-potentials},
    and the discussions in
    \cref{para-dt-smooth-stacks,para-mot-dt-smooth-stacks}
    apply, which provide more straightforward formulae
    for the DT invariants.
\end{para}

\begin{para}[Wall-crossing formulae]
    For a self-dual quiver~$Q$
    with a self-dual potential~$W$,
    \cref{thm-wcf-dt}
    applies to the self-dual $(-1)$-shifted symplectic linear stack~$\mathcal{X}_{Q, W}$,
    proving wall-crossing formulae for the DT~invariants
    defined in \cref{para-quiver-dt}.
    We are allowed to take~$\tau_+, \tau_-$ in the theorem to be
    any two self-dual slope functions,
    since we can take~$\tau_0$ in the theorem
    to be the trivial stability condition, which is permissible.
\end{para}

\begin{para}[An algorithm for computing DT invariants]
    \label{para-quiver-algorithm}
    \allowdisplaybreaks
    For a self-dual quiver~$Q$,
    in the case when the potential~$W$ is zero,
    we describe an algorithm for computing
    all the invariants
    $\mathrm{DT}_\alpha (\tau)$,
    $\mathrm{DT}_\alpha^\mathrm{mot} (\tau)$,
    $\mathrm{DT}_\theta (\tau)$,
    and $\mathrm{DT}_\theta^\mathrm{mot} (\tau)$,
    for any self-dual slope function~$\tau$.

    First, we compute the motives of
    $\mathcal{X}_\alpha = V_\alpha / G_\alpha$
    and $\mathcal{X}^\mathrm{sd}_\theta = V^\mathrm{sd}_\theta / G^\mathrm{sd}_\theta$,
    as in \crefrange{eq-moduli-quiver}{eq-moduli-quiver-sd},
    in $\mathbb{M} (K)$.
    We use the relation \cref{eq-motive-vb}
    for the vector bundles $\mathcal{X}_\alpha \to {*} / G_\alpha$
    and $\mathcal{X}^\mathrm{sd}_\theta \to {*} / G^\mathrm{sd}_\theta$,
    and the motives
    \begin{align}
        \label{eq-motive-bgl}
        [* / \mathrm{GL} (n)]
        & = \prod_{i=0}^{n-1}
        \frac{1}{\mathbb{L}^n - \mathbb{L}^i} \ ,
        \\
        \label{eq-motive-bo-even}
        [* / \mathrm{O} (2n)]
        & = \mathbb{L}^n \cdot \prod_{i=0}^{n-1}
        \frac{1}{\mathbb{L}^{2n} - \mathbb{L}^{2i}} \ ,
        \\
        \label{eq-motive-bo-odd}
        \mathllap{[* / \mathrm{O} (2n+1)] = {}}
        [* / \mathrm{Sp} (2n)]
        & = \mathbb{L}^{-n} \cdot \prod_{i=0}^{n-1}
        \frac{1}{\mathbb{L}^{2n} - \mathbb{L}^{2i}} \ ,
    \end{align}
    where the linear and symplectic cases follow from
    \textcite[Theorem~4.10]{joyce-2007-stack-functions},
    as these are \emph{special groups} in the sense there,
    while the orthogonal cases are due to
    \textcite[Theorem~3.7]{dhillon-young-2016-motive}.
    We then have
    \begin{alignat}{2}
        \label{eq-quiver-motive}
        \int_{\mathcal{X}_\alpha} \nu_\mathcal{X}^\mathrm{mot}
        & = \mathbb{L}^{-(\dim V_\alpha - \dim G_\alpha) / 2} \cdot
        [\mathcal{X}_\alpha]
        && =
        \mathbb{L}^{(\dim V_\alpha + \dim G_\alpha) / 2} \cdot
        [* / G_\alpha] \ ,
        \\
        \label{eq-quiver-motive-sd}
        \int_{\mathcal{X}^\mathrm{sd}_\theta} \nu_{\subXsd}^\mathrm{mot}
        & = \mathbb{L}^{-(\dim V^\mathrm{sd}_\theta - \dim G^\mathrm{sd}_\theta) / 2} \cdot
        [\mathcal{X}^\mathrm{sd}_\theta]
        && =
        \mathbb{L}^{(\dim V^\mathrm{sd}_\theta + \dim G^\mathrm{sd}_\theta) / 2} \cdot
        [* / G^\mathrm{sd}_\theta] \ ,
    \end{alignat}
    where $[* / G_\alpha]$ and $[* / G^\mathrm{sd}_\theta]$
    are products of the rational functions in
    \cref{eq-motive-bgl,eq-motive-bo-even,eq-motive-bo-odd}.

    Next, we compute the invariants
    $\mathrm{DT}_\alpha^\mathrm{mot} (0)$
    and $\mathrm{DT}_\theta^{\smash{\mathrm{sd,mot}}} (0)$
    for the trivial slope function~$0$.
    These can be obtained from
    \crefrange{eq-dt-mot-smooth}{eq-dt-mot-smooth-sd}
    by substituting in
    \cref{eq-def-epsilon-linear,eq-def-epsilon-sd},
    then using the relations
    \crefrange{eq-integral-homo-linear}{eq-integral-homo-sd}
    to reduce to the known integrals
    \crefrange{eq-quiver-motive}{eq-quiver-motive-sd}.
    This process also shows that
    $\mathrm{DT}_\alpha^\mathrm{mot} (0)$
    and $\mathrm{DT}_\theta^{\smash{\mathrm{sd,mot}}} (0)$
    are rational functions in~$\mathbb{L}^{1/2}$,
    and evaluating them at $\mathbb{L}^{1/2} = -1$
    gives the numerical invariants
    $\mathrm{DT}_\alpha (0)$
    and $\mathrm{DT}_\theta^\mathrm{sd} (0)$.

    Finally, for a general self-dual slope function~$\tau$,
    we may apply the wall-crossing formulae
    \crefrange{eq-wcf-dt}{eq-wcf-dt-mot-sd}
    to compute the invariants
    $\mathrm{DT}_\alpha (\tau)$,
    $\mathrm{DT}_\theta^\mathrm{sd} (\tau)$,
    $\mathrm{DT}_\alpha^\mathrm{mot} (\tau)$,
    and $\mathrm{DT}_\theta^{\smash{\mathrm{sd,mot}}} (\tau)$
    from the case when $\tau = 0$,
    which is already known.

    As an alternative to the final step,
    we may first compute the integrals
    $\smash{\int_{\mathcal{X}^\mathrm{ss}_\alpha (\tau)} \nu_\mathcal{X}^\mathrm{mot}}$
    and
    $\smash{\int_{\mathcal{X}^{\smash{\mathrm{sd,ss}}}_\theta (\tau)}
    \nu_{\subXsd}^\mathrm{mot}}$
    using the relations
    \crefrange{eq-wcf-anti-dom-delta}{eq-wcf-anti-dom-delta-sd},
    together with
    \crefrange{eq-integral-homo-linear}{eq-integral-homo-sd}
    to reduce to the known integrals
    \crefrange{eq-quiver-motive}{eq-quiver-motive-sd},
    then repeat the process above to obtain the invariants
    $\mathrm{DT}_\alpha^\mathrm{mot} (\tau)$
    and $\mathrm{DT}_\theta^{\smash{\mathrm{sd,mot}}} (\tau)$,
    which are rational functions in~$\mathbb{L}^{1/2}$.
    We then evaluate them at~$\mathbb{L}^{1/2} = -1$
    to obtain the numerical invariants
    $\mathrm{DT}_\alpha (\tau)$
    and $\mathrm{DT}_\theta^\mathrm{sd} (\tau)$.

    The author has implemented the above algorithm
    using a computer program,
    and some numerical results are presented below.
\end{para}

\begin{example}[The point quiver]
    \label{eg-point-quiver}
    Consider the point quiver~$Q$ with a single vertex and no edges,
    with the trivial slope function $\tau = 0$.
    There are two self-dual structures on~$Q$,
    with the signs $+1$ and $-1$ assigned to the vertex, respectively.

    We have the moduli stacks
    $\mathcal{X}_Q = \coprod_{n \geq 0} * / \mathrm{GL} (n)$
    and~$\mathcal{X}_Q^\mathrm{sd} =
    \coprod_{n \geq 0} * / \mathrm{O} (n)$
    or~$\coprod_{n \geq 0} * / \mathrm{Sp} (2n)$,
    depending on the sign of the vertex.
    As in \textcite[Example~7.19]{joyce-song-2012},
    the usual DT invariants of~$Q$ are given by
    \begin{equation*}
        \mathrm{DT}_{\mathsf{A}_{n - 1}} = \frac{1}{n^2}
    \end{equation*}
    for all $n \geq 1$, where the subscript~$\mathsf{A}_{n - 1}$ refers to
    the Dynkin type of $\mathrm{GL} (n)$.

    Based on explicit computation following the algorithm
    in \cref{para-quiver-algorithm},
    we conjecture that
    \begin{equation*}
        \mathrm{DT}^\mathrm{sd}_{\mathsf{B}_n} =
        \mathrm{DT}^\mathrm{sd}_{\mathsf{C}_n} =
        (-1)^n \, \binom{-1/4}{n} \ ,
        \qquad
        \mathrm{DT}^\mathrm{sd}_{\mathsf{D}_n} =
        (-1)^n \, \binom{\, 1/4 \,}{n} \ ,
    \end{equation*}
    where the subscripts~$\mathsf{B}_n$, $\mathsf{C}_n$, and~$\mathsf{D}_n$
    refer to the Dynkin types of
    $\mathrm{O} (2n + 1)$, $\mathrm{Sp} (2n)$, and $\mathrm{O} (2n)$, respectively.
    Equivalently, we have the generating series
    \begin{equation*}
        \sum_{n \geq 0} q^n \cdot \mathrm{DT}^\mathrm{sd}_{\mathsf{B}_n} =
        \sum_{n \geq 0} q^n \cdot \mathrm{DT}^\mathrm{sd}_{\mathsf{C}_n} =
        (1 - q)^{-1/4} \ ,
        \qquad
        \sum_{n \geq 0} q^n \cdot \mathrm{DT}^\mathrm{sd}_{\mathsf{D}_n} =
        (1 - q)^{1/4} \ .
    \end{equation*}
    We expect to prove this conjecture in a future paper \cite{epsilon-iii},
    and we expect that the coincidence of
    the type~$\mathsf{B}$ and~$\mathsf{C}$ invariants here
    should be related to the fact that these groups
    are Langlands dual to each other.
\end{example}

\begin{example}[The \texorpdfstring{$\tilde{\mathsf{A}}_1$}{Ã₁} quiver]
    \label{eg-tilde-a1}
    Let $Q = (\bullet \rightrightarrows \bullet)$
    be the quiver with two vertices and
    two arrows pointing in the same direction,
    called the \emph{$\tilde{\mathsf{A}}_1$ quiver}.
    Consider the contravariant involution of $Q$
    that exchanges the two vertices but fixes the edges.
    We use the simplified notation $\tilde{\mathsf{A}}_1^{\smash{u,v}}$,
    where $u, v$ are the signs in the self-dual structure.
    For example, $\tilde{\mathsf{A}}_1^{\smash{+,++}}$
    means that we take the sign $+1$ on all vertices and edges.
    Note that both vertices must have the same sign.
    We use the slope function $\tau = (1, -1)$.

    Based on numerical evidence from applying
    the algorithm in \cref{para-quiver-algorithm},
    we conjecture that we have the generating series
    \begin{numcases}{
            \sum_{n = 0}^\infty
            q^{n/2} \cdot \mathrm{DT}^{\mathrm{sd,mot}}_{\smash{(n,n)}} (\tau)
        = }
        \label{eq-tilde-a1-dt-1}
        \frac{(1 - q)^{1/2}}{(1 - q^{1/2} \, \mathbb{L}^{-1/2}) (1 - q^{1/2} \, \mathbb{L}^{1/2})}
        \hspace{-0.5em}
        & \text{for $\tilde{\mathsf{A}}_1^{\smash{+,++}}$
        and $\tilde{\mathsf{A}}_1^{\smash{-,--}}$} \ ,
        \\[.5ex]
        \label{eq-tilde-a1-dt-2}
        \biggl( \frac{1+q^{1/2}}{1-q^{1/2}} \biggr)^{1/2}
        & \text{for $\tilde{\mathsf{A}}_1^{\smash{+,+-}}$
        and $\tilde{\mathsf{A}}_1^{\smash{-,+-}}$} \ ,
        \\[.5ex]
        \label{eq-tilde-a1-dt-3}
        (1-q)^{1/2}
        & \text{for $\tilde{\mathsf{A}}_1^{\smash{+,--}}$
        and $\tilde{\mathsf{A}}_1^{\smash{-,++}}$} \ .
        {\addtocounter{equation}{-1}\refstepcounter{equation}\label{eq-tilde-a1-dt-3-fix}}
    \end{numcases}

    This example is related to coherent sheaves on~$\mathbb{P}^1$,
    as we will discuss in \cref{eg-dt-p1}.
\end{example}

\subsection{Orthosymplectic complexes}
\label{subsec-coh}

\begin{para}
    In the following sections,
    \crefrange{subsec-curves}{subsec-vw},
    we will apply our theory to study orthosymplectic DT invariants
    for certain smooth projective varieties over~$\mathbb{C}$,
    in three variants which apply to
    curves, surfaces, and threefolds, respectively.
    This section will provide background material
    that is common to these settings.

    Our DT invariants will count
    \emph{orthogonal} or \emph{symplectic complexes} on a variety,
    which are perfect complexes equipped with
    isomorphisms to their dual complexes.

    We note that this approach of defining a coherent-sheaf-like version of
    principal bundles is different from the related construction of
    \textcite{gomez-fernandez-herrero-zamora-2024-principal}.
\end{para}

\begin{para}[The setting]
    \label{para-coh-setup}
    Throughout, we work over the complex number field~$\mathbb{C}$,
    and we fix a connected, smooth, projective
    $\mathbb{C}$-variety~$Y$ of dimension~$n$.

    Let~$\bar{\mathcal{X}}$ be the derived moduli stack
    of perfect complexes on~$Y$,
    as in \textcite{toen-vaquie-2007-moduli}.
    It is a derived algebraic stack
    locally of finite presentation over~$\mathbb{C}$.
    By \textcite{pantev-toen-vaquie-vezzosi-2013},
    if~$Y$ is a \emph{Calabi--Yau $n$-fold},
    meaning that its canonical bundle~$K_Y$ is trivial,
    then~$\bar{\mathcal{X}}$
    has a $(2 - n)$-shifted symplectic structure.

    We fix the data $(I, L, s, \varepsilon)$,
    where $I \colon Y \simto Y$ is an involution,
    $L \to Y$ is a line bundle,
    $s \in \mathbb{Z}$,
    and $\varepsilon \colon L \simto I^* (L)$ is an isomorphism
    such that $I^* (\varepsilon) \circ \varepsilon = \mathrm{id}_L$.
    Define a self-dual structure on~$\mathsf{Perf} (Y)$
    by the dual functor
    \begin{equation}
        \label{eq-coh-dual-functor}
        \mathbb{D} = \mathbb{R} \calHom (I^* (-), L) [s] \colon
        \mathsf{Perf} (Y) \longsimto \mathsf{Perf} (Y)^\mathrm{op} \ ,
    \end{equation}
    and identify~$\mathbb{D} (\mathbb{D} (E))$ with~$E$
    using the isomorphism~$\varepsilon$,
    for all objects $E \in \mathsf{Perf} (Y)$.

    This self-dual structure induces a
    $\mathbb{Z}_2$-action on~$\bar{\mathcal{X}}$,
    and the fixed locus~$\bar{\mathcal{X}}^\mathrm{sd}$
    is the moduli stack of self-dual perfect complexes on~$Y$.
    In particular, when $I = \mathrm{id}_Y$
    and $\varepsilon = \pm \mathrm{id}_L$,
    the stack~$\bar{\mathcal{X}}^\mathrm{sd}$ parametrizes
    \emph{$L [s]$-twisted orthogonal} or \emph{symplectic complexes} on~$Y$,
    respectively.
    When $L = \mathcal{O}_Y$ and $s = 0$,
    they are simply called
    \emph{orthogonal} or \emph{symplectic complexes}.
\end{para}

\begin{para}[Bridgeland stability conditions]
    \label{para-bridgeland}
    Consider the free abelian group
    \begin{equation}
        K (Y) = \{ \mathrm{ch} (E) \mid
        E \in \mathsf{Perf} (Y) \}
        \subset \mathrm{H}^{2 \bullet} (Y; \mathbb{Q}) \ .
    \end{equation}
    It has an involution~$(-)^\vee$ given by
    $\mathrm{ch} (E) \mapsto \mathrm{ch} (\mathbb{D} (E))$.
    Let
    $K^\mathrm{sd} (Y) \subset K (Y)$
    be the fixed locus.

    Define a \emph{Bridgeland stability condition} on~$Y$
    as in \textcite[Definition~5.1]{bridgeland-2007-stability},
    which is a pair $\tau = (Z, \mathcal{P})$,
    where $Z \colon K (Y) \to \mathbb{C}$
    is a group homomorphism,
    and $\mathcal{P}$ is a slicing of $\mathsf{Perf} (Y)$.

    Let $\mathrm{Stab} (Y)$ be the set of
    Bridgeland stability conditions on~$Y$,
    which has a topology given by a
    \emph{generalized metric}~$d$,
    that is, a metric allowing infinite distance,
    defined as in~\cite[\S8]{bridgeland-2007-stability} by
    \begin{equation}
        d (\tau, \tilde{\tau}) = \sup {} \Bigl\{
            | \phi^+ (E) - \tilde{\phi}^+ (E) |,
            | \phi^- (E) - \tilde{\phi}^- (E) |,
            | {\log m (E)} - {\log \tilde{m} (E)} |
            \Bigm| E \neq 0
        \Bigr\} \ ,
    \end{equation}
    where $E$ runs through all non-zero objects of $\mathsf{Perf} (Y)$,
    and $\phi^+ (E)$, $\phi^- (E)$, $m (E)$ are the maximal phase,
    the minimal phase, and the sum of lengths of central charges
    of the $\tau$-HN factors of~$E$,
    respectively, and similarly for $\tilde{\tau}$.
    The projection
    \begin{equation}
        \mathrm{Stab} (Y) \longrightarrow \mathrm{Hom} (K (Y), \mathbb{C})
    \end{equation}
    given by $(Z, \mathcal{P}) \mapsto Z$ is a local homeomorphism,
    and equips $\mathrm{Stab} (Y)$ with the structure of a complex manifold.

    The self-dual structure on $\mathsf{Perf} (Y)$ defined in \cref{para-coh-setup}
    gives an anti-holomorphic involution
    $(-)^\vee \colon \mathrm{Stab} (Y) \to \mathrm{Stab} (Y)$,
    given by $(Z, \mathcal{P}) \mapsto (Z^\vee, \mathcal{P}^\vee)$,
    where $Z^\vee (\alpha) = \smash{\overline{Z (\alpha^\vee)}}$
    and $\mathcal{P}^\vee (t) = \mathcal{P} (-t)^\vee$.
    The fixed locus
    $\mathrm{Stab}^\mathrm{sd} (Y) \subset \mathrm{Stab} (Y)$
    is the set of \emph{self-dual stability conditions},
    and we have a local homeomorphism
    \begin{equation}
        \mathrm{Stab}^\mathrm{sd} (Y) \longrightarrow \mathrm{Hom} (K (Y), \mathbb{C})^{\mathbb{Z}_2} \ ,
    \end{equation}
    given by $(Z, \mathcal{P}) \mapsto Z$, where $\mathbb{Z}_2$ acts on $\mathrm{Hom} (K (Y), \mathbb{C})$
    via the anti-holomorphic involution $Z \mapsto Z^\vee$.
    This equips $\mathrm{Stab}^\mathrm{sd} (Y)$
    with the structure of a real analytic manifold.
\end{para}

\begin{para}[Permissibility]
    \label{para-bridgeland-permissibility}
    We define subspaces of
    \emph{permissible Bridgeland stability conditions},
    \begin{equation*}
        \mathrm{Stab}^\circ (Y) \subset \mathrm{Stab} (Y) \ ,
        \qquad
        \mathrm{Stab}^{\circ, \mathrm{sd}} (Y) \subset \mathrm{Stab}^\mathrm{sd} (Y)
    \end{equation*}
    as maximal open subsets such that
    every element $\tau = (Z, \mathcal{P})$
    with $Z (K (Y)) \subset \mathbb{Q} + \mathrm{i} \mathbb{Q}$
    satisfies the following conditions:
    \begin{enumerate}
        \item
            \label{item-bridgeland-supp}
            \emph{Support property}.
            For any $r > 0$, there are only finitely many classes
            $\alpha \in K (Y)$
            admitting a semistable object,
            such that $|Z (\alpha)| \leq r$.

        \item
            \label{item-bridgeland-open}
            \emph{Generic flatness}.
            See \textcite[Problem~3.5.1]{abramovich-polishchuk-2006},
            \textcite[Definition~6.2.4]{halpern-leistner-instability},
            or \textcite[Definition~4.4]{piyaratne-toda-2019-bridgeland}
            for the formulation.

        \item
            \label{item-bridgeland-bounded}
            \emph{Boundedness}.
            For any $t \in \mathbb{R}$
            and $\alpha \in K (Y)$
            with $Z (\alpha) \in \mathbb{R}_{\geq 0} \cdot \mathrm{e}^{\uppi \mathrm{i} t}$,
            there is a quasi-compact open substack
            $\mathcal{X} (\tau; t)_\alpha \subset \bar{\mathcal{X}}$
            whose $\mathbb{C}$-points are
            the objects of~$\mathcal{P} (t)$ of class~$\alpha$.
    \end{enumerate}
    By \textcite[Proposition~4.12]{piyaratne-toda-2019-bridgeland},
    if a stability condition~$\tau$ satisfies these conditions
    and has rational central charge,
    then a neighbourhood of~$\tau$ lies in~$\mathrm{Stab}^\circ (Y)$.

    For $\tau \in \mathrm{Stab}^\circ (Y)$
    and an interval $J \subset \mathbb{R}$
    of length $|J| < 1$,
    there is an open substack
    \begin{equation*}
        \mathcal{X} (\tau; J) \subset \bar{\mathcal{X}}
    \end{equation*}
    whose $\mathbb{C}$-points are the objects of~$\mathcal{P} (J)$,
    which we construct in \cref{para-bridgeland-openness} below.
    It is a derived linear stack in the sense of
    \cref{para-derived-linear-stacks},
    and $\tau$ defines a permissible stability condition
    on its classical truncation in the sense of
    \cref{subsec-stack-stability},
    where the $\Theta$-stratification is constructed in
    \cref{para-bridgeland-openness} below.

    In particular, if $\tau \in \mathrm{Stab}^{\circ, \mathrm{sd}} (Y)$
    and $J = -J$,
    then $\mathcal{X} (\tau; J)$ is a self-dual derived linear stack,
    and the induced stability condition on $\mathcal{X} (\tau; J)$ is self-dual.
    The stack~$\mathcal{X} (\tau; 0)^\mathrm{sd}$
    is the moduli stack of
    \emph{$\tau$-semistable self-dual complexes},
    which our orthosymplectic DT invariants will count.
\end{para}

\begin{example}
    \label{eg-bridgeland}
    Let~$Y$ be either a curve, a surface,
    or a threefold satisfying the conjectural
    Bogomolov--Gieseker inequality of
    \textcite[Conjecture~3.2.7]{bayer-macri-toda-2014-bridgeland},
    and fix the data $(I, L, s, \varepsilon)$
    as in \cref{para-orientation-data}.
    Suppose we are given an ample class
    $\omega \in \mathrm{H}^{1,1} (Y; \mathbb{Q})$
    with $I^* (\omega) = \omega$.

    In this case, we give an example of a
    permissible self-dual Bridgeland stability condition,
    with central charge valued in $\mathbb{Q} + \mathrm{i} \mathbb{Q}$.

    Let $\beta = c_1 (L) / 2 \in \mathrm{H}^2 (Y; \mathbb{Q})$.
    Consider the group homomorphism
    $Z_\omega \colon K (Y) \to \mathbb{C}$
    given by
    \begin{equation}
        Z_\omega (\alpha) =
        \mathrm{i}^{n - s} \cdot
        \int_Y \exp (-\beta - \mathrm{i} \omega) \cdot \alpha
    \end{equation}
    for $\alpha \in K (Y)$, where $n = \dim Y$.
    This is compatible with the self-dual structure,
    in the sense that we have
    $Z_\omega (\mathbb{D} (\alpha)) = \smash{\overline{Z_\omega (\alpha)}}$
    for all $\alpha \in K (Y)$.
    Here, our coefficient~$\mathrm{i}^{n - s}$
    is only inserted to make~$Z_\omega$ self-dual,
    and does not essentially affect the stability condition.

    There is a Bridgeland stability condition
    $\tau_\omega = (Z_\omega, \mathcal{P}_\omega) \in \mathrm{Stab}^\circ (Y)$
    with central charge~$Z_\omega$,
    by the works of \textcite{toda-2008-k3}
    and \textcite{piyaratne-toda-2019-bridgeland}.
    See also the earlier works of
    \textcite{bridgeland-2008-k3}
    and \textcite{arcara-bertram-2012-surfaces}
    in the case of surfaces.

    In fact, we can also choose~$\mathcal{P}_\omega$
    so that~$\tau_\omega$ is self-dual, or equivalently,
    the slicing~$\mathcal{P}_\omega$ coincides with its
    dual slicing~$\mathcal{P}_\omega^\vee$ given by
    \begin{equation*}
        \mathcal{P}_\omega^\vee (t) = \mathbb{D} (\mathcal{P}_\omega (-t)) \ .
    \end{equation*}
    This follows from
    \textcite[Remark~4.4.3]{bayer-macri-toda-2014-bridgeland},
    which is essentially the same statement in the case when $s = 1$,
    and the general case is constructed from this case
    by simply shifting the phase by $(1 - s) / 2$.
\end{example}

\begin{para}
    \label{para-bridgeland-openness}
    In the situation of \cref{para-bridgeland-permissibility},
    for $\tau \in \mathrm{Stab}^\circ (Y)$
    and an interval $J \subset \mathbb{R}$ with $|J| < 1$,
    we construct the open substack
    $\mathcal{X} (\tau; J) \subset \bar{\mathcal{X}}$
    and its $\Theta$-stratification by $\tau$-HN types as follows.

    Applying \textcite[Proposition~4.12]{piyaratne-toda-2019-bridgeland},
    we may apply a phase shift and assume that
    $J \subset \mathopen{]} \varepsilon, 1 - \varepsilon \mathclose{[}$
    for some $\varepsilon > 0$.
    Fix $\alpha \in K (Y)$ of slope within~$J$,
    and then choose a perturbation~$\tau' = (Z', \mathcal{P}')$ of~$\tau$
    satisfying the above properties,
    with $d (\tau', \tau) < \varepsilon$
    and $Z' (K (Y)) \subset \mathbb{Q} + \mathrm{i} \mathbb{Q}$.
    Then if~$\beta \in K (Y)$ is the class of a $\tau$-HN factor
    of an object of $\mathcal{P} (J)$ of class~$\alpha$,
    then $Z' (\beta)$ must lie in the bounded region
    \begin{equation*}
        \bigl\{ r \mathrm{e}^{\uppi \mathrm{i} t} \mid
        r \geq 0, \ t \in J_\varepsilon \bigr\}
        \cap
        \bigl\{ Z' (\alpha) - r \mathrm{e}^{\uppi \mathrm{i} t} \mid
        r \geq 0, \ t \in J_\varepsilon \bigr\}
        \subset \mathbb{C} \ ,
    \end{equation*}
    where $J_\varepsilon$ is the $\varepsilon$-neighbourhood of~$J$,
    so the set~$B$ of such classes~$\beta$ is finite.
    We then choose~$\varepsilon$ small enough,
    possibly changing~$\tau'$,
    so that for any $\beta, \beta' \in B$,
    $\arg Z (\beta) < \arg Z (\beta')$
    implies $\arg Z' (\beta) < \arg Z' (\beta')$,
    where we take phases within $J_\varepsilon$.
    Now, \textcite[Theorem~6.5.3]{halpern-leistner-instability}
    gives the open substack
    $\mathcal{X} (\tau'; \mathopen{]} 0, 1 \mathclose{[})$
    with a $\Theta$-stratification by $\tau'$-HN types.
    The part of~$\mathcal{X} (\tau; J)$ lying in~$\mathcal{X}_\alpha$
    can be defined as a finite open union of strata.

    To construct the $\Theta$-stratification on~$\mathcal{X} (\tau; J)$,
    we follow the proof of
    \cite[Theorem~6.5.3]{halpern-leistner-instability},
    with the following modifications.
    Instead of using rational weights for HN~filtrations,
    we use \emph{real-weighted filtrations}
    in the sense of \cite[\S\S 7.2--7.3]{epsilon-i}.
    As a result, we obtain real-weighted $\Theta$-stratifications,
    which non-canonically give usual $\Theta$-stratifications
    by \cite[Proposition~7.2.12]{epsilon-i}.
    The key ingredients of the proof in \cite{halpern-leistner-instability}
    are the conditions (R), (S), and (B) there.
    The rationality condition~(R)
    is no longer needed as we use real weights.
    The condition (S) needs to be modified to incorporate real weights,
    but the argument still works to prove it.
    The condition~(B) follows from the quasi-compactness of
    $\mathcal{X} (\tau; J)$.

    This also shows that any $\tau \in \mathrm{Stab}^\circ (Y)$
    satisfies the support property and the boundedness property
    in \cref{para-bridgeland-permissibility},
    where for the support property,
    fixing $r > 0$ and choosing $\tau'$ rational
    with $d (\tau', \tau) < \varepsilon$ with $\varepsilon < 1/2$,
    for any class $\alpha$ with $|Z (\alpha)| \leq r$
    admitting a $\tau$-semistable object~$E$,
    by considering the $\tau'$-HN filtration of~$E$,
    we see that~$\alpha$ is a finite sum of classes~$\beta$
    with $|Z' (\beta)| < r \mathrm{e}^\varepsilon$
    admitting $\tau'$-semistable objects,
    and these classes lie on the same side of a line
    in~$\mathbb{C}$, so there are only finitely many choices.
\end{para}

\subsection{DT invariants for curves}
\label{subsec-curves}

\begin{para}
    We define DT invariants
    counting orthogonal and symplectic bundles on a curve.
    These are orthosymplectic versions of
    Joyce's motivic invariants counting vector bundles on a curve,
    as in \cite[\S6.3]{joyce-2008-configurations-iv}.
\end{para}

\begin{para}
    Let~$C$ be a connected, smooth, projective curve over~$\mathbb{C}$,
    and fix the data $(I, L, s, \varepsilon)$
    as in \cref{para-coh-setup}.
    This defines a self-dual structure on~$\mathsf{Perf} (C)$.

    Let $\tau = (Z, \mathcal{P})$ be the Bridgeland stability condition
    defined in \cref{eg-bridgeland},
    where we choose the unique element
    $\omega \in \mathrm{H}^2 (C; \mathbb{Q})$
    with $\int_C \omega = 1$.
    Explicitly, we have
    \begin{equation}
        Z (E) = \mathrm{i}^{-s} \cdot \Bigl(
            \Bigl( 1 - \mathrm{i} \, \frac{\deg L}{2} \Bigr) \, r + d
        \Bigr)
    \end{equation}
    for $E \in \mathsf{Perf} (C)$ with rank~$r$ and degree~$d$,
    so that $\operatorname{ch} (E) = r + d \omega$.
    Note that the choices of~$L$ and~$s$
    do not affect which objects are semistable,
    although they affect which objects are self-dual.
    The subcategory $\mathsf{Vect} (C) \subset \mathsf{Perf} (C)$
    of vector bundles on~$C$ satisfies
    $\mathsf{Vect} (C) = \mathcal{P} \bigl(
        \mathopen{]} (-1-s)/2, (1-s)/2 \mathclose{[}
    \bigr)$.
\end{para}

\begin{para}[The even case]
    \label{para-curves-even}
    When $s$ is even, the abelian category $\mathcal{P} (0)$
    consists of objects $E [s/2]$
    for semistable vector bundles~$E$ on~$C$
    in the usual sense,
    whose rank~$r$ and degree~$d$ satisfy $d = r \deg L / 2$.
    The self-dual objects are such~$E$ with isomorphisms
    $\phi \colon E \simto \calHom (I^* (E), L)$
    with $I^* (\phi)^\vee \circ \phi = (-1)^{s/2} \cdot \varepsilon$.

    In~particular, when $L = \mathcal{O}_C$,
    semistable self-dual complexes can be identified,
    up to a shift,
    with orthogonal or symplectic
    bundles on~$C$, depending on whether
    $(-1)^{s/2} \cdot \varepsilon = 1$ or $-1$,
    whose underlying vector bundles are semistable in the usual sense.

    For each rank $r > 0$,
    we have the self-dual DT invariants
    \begin{equation*}
        \mathrm{DT}^\mathrm{sd}_{r} \in \mathbb{Q} \ , \qquad
        \mathrm{DT}^\mathrm{sd, mot}_{r}
        \in \hat{\mathbb{M}}^\mathrm{mon} (K; \mathbb{Q}) \ ,
    \end{equation*}
    counting semistable self-dual vector bundles of rank~$r$ as above,
    defined as in \cref{para-dt-smooth-stacks,para-mot-dt-smooth-stacks}
    using the self-dual linear stack $\mathcal{X} (\tau; 0)$ defined in
    \cref{para-bridgeland-permissibility}
    with the trivial stability condition.
\end{para}

\begin{para}[The odd case]
    \label{para-curves-odd}
    When $s$ is odd, $\mathcal{P} (0)$
    consists of objects $E [(s-1)/2]$
    for torsion sheaves~$E$ on~$C$,
    and the semistable self-dual objects are such~$E$
    with isomorphisms
    $\phi \colon E \simto \mathbb{R} \calHom (I^* (E), \allowbreak L [1])$
    with $I^* (\phi)^\vee \circ \phi = (-1)^{(s-1)/2} \cdot \varepsilon$.
    For each degree $d > 0$, we have the self-dual DT invariants
    \begin{equation*}
        \mathrm{DT}^\mathrm{sd}_{0, d} \in \mathbb{Q} \ , \qquad
        \mathrm{DT}^{\smash{\mathrm{sd, mot}}}_{0, d}
        \in \hat{\mathbb{M}}^\mathrm{mon} (K; \mathbb{Q}) \ ,
    \end{equation*}
    counting these self-dual torsion sheaves,
    defined similarly as above.

    In fact, these invariants do not depend on the choice of~$L$,
    since choosing a suitable $I$-invariant open cover of~$C$ trivializing~$L$,
    torsion sheaves supported on the open sets
    give an open cover of the moduli stacks,
    where pieces and intersections do not depend on~$L$.
    It then follows from \cite[Theorem~5.2.10~(i)]{epsilon-ii}
    that the invariants do not depend on~$L$.
\end{para}

\begin{example}[Invariants for \texorpdfstring{$\mathbb{P}^1$}{P¹}]
    \label{eg-dt-p1}
    Consider the case when $C = \mathbb{P}^1$
    and $I = \mathrm{id}_{\mathbb{P}^1}$.
    We describe the invariants in two situations.

    When $s = 0$ and $L = \mathcal{O}_{\mathbb{P}^1}$,
    since every vector bundle on~$\mathbb{P}^1$
    splits as a direct sum of line bundles,
    all semistable vector bundles of slope~$0$ are trivial bundles.
    The self-dual abelian category~$\mathcal{P} (0)$
    is thus equivalent to the category of
    finite-dimensional $\mathbb{C}$-vector spaces,
    with one of the two self-dual structures
    described in \cref{eg-point-quiver},
    depending on the sign~$\varepsilon$.
    The DT~invariants agree with the ones given there.

    When $s = 1$, invariants for~$\mathbb{P}^1$
    are related to DT invariants for self-dual quivers.
    Indeed, as a special case of \textcite[Theorem~6.2]{bondal-1990},
    we have an equivalence
    \begin{equation}
        \label{eq-dbcoh-equiv-dbmod}
        \Phi \colon \mathsf{Perf} (\mathbb{P}^1) \longsimto
        \mathsf{D}^\mathrm{b} \mathsf{Mod} (\mathbb{C} Q) \ ,
    \end{equation}
    where $Q$ is the~$\tilde{\mathsf{A}}^1$ quiver in
    \cref{eg-tilde-a1},
    and $\Phi (E) = \bigl(
        \mathbb{R} \Gamma (\mathbb{P}^1, E (-1))
        \rightrightarrows
        \mathbb{R} \Gamma (\mathbb{P}^1, E)
    \bigr)$,
    with the two maps given by multiplying with the coordinate functions
    $x_0, x_1 \in \Gamma (\mathbb{P}^1, \mathcal{O}_{\mathbb{P}^1} (1))$.

    \begin{figure}[t]
        \centering
        \scalebox{.9}{%
        \begin{tikzpicture}
            \begin{scope}[shift={(-4.5, 0)}]
                \fill [black!10] (-2.7, 1.35) -- (2.7, -1.35) -- (2.7, 2.7) -- (-2.7, 2.7);
                \fill [black!30] (-2.7, 1.325) -- (2.7, -1.375) -- (2.7, -1.325) -- (-2.7, 1.375);
                \draw [-{Stealth[length=6]}]
                    (0, -3) -- (0, 3) node [anchor=south] {$d$};
                \draw [-{Stealth[length=6]}]
                    (-3, 0) -- (3, 0) node [anchor=west] {$r$};
                \foreach \i in {-2, ..., 2} {
                    \fill (1, \i) circle (.05)
                        node [anchor=north west] {$\mathcal{O} (\i)$};
                }
                \foreach \i in {2, 1, 0, -1, -2} {
                    \fill (-1, -\i) circle (.05)
                        node [anchor=south east] {$\mathcal{O} (\i) [1]$};
                }
                \foreach \i in {-2, ..., 2} {
                    \fill (0, \i) circle (.05);
                }
                \foreach \i in {-1, ..., 2} {
                    \draw [thick, Stealth-Stealth, shorten > = 5, shorten < = 5] (-1, \i) -- (1, \i-1);
                }
                \node at (0, -3.5) {$\mathsf{Perf} (\mathbb{P}^1)$};
            \end{scope}
            \begin{scope}
                \node [anchor=south] at (0, 0) {$\Phi$};
                \node [anchor=north] at (0, 0) {$\simeq$};
                \draw [thick, -Stealth] (-.4, 0) -- (.4, 0);
            \end{scope}
            \begin{scope}[shift={(4.2, 0)}]
                \fill [black!10] (0, -2.7) -- (2.7, -2.7) -- (2.7, 2.7) -- (0, 2.7);
                \draw [-{Stealth[length=6]}]
                    (0, -3) -- (0, 3) node [anchor=south] {$d_0 - d_1$};
                \draw [-{Stealth[length=6]}]
                    (-3, 0) -- (3, 0) node [anchor=west] {$d_0 + d_1$};
                \foreach \i in {-2, ..., 1} {
                    \fill (1.5 * \i + .75, .75) circle (.05);
                    \fill (1.5 * \i + .75, -.75) circle (.05);
                }
                \foreach \i in {-1, 0, 1} {
                    \fill (1.5 * \i, 0) circle (.05);
                }
                \foreach \i in {-2, ..., 1} {
                    \draw [thick, Stealth-Stealth, shorten > = 5, shorten < = 5] (1.5 * \i + .75, -.75) -- (1.5 * \i + .75, .75);
                }
                \node [anchor=south] at (-2.25, .75) {$(-1, -2)$};
                \node [anchor=south] at (-.75, .75) {$(0, -1)$};
                \node [anchor=south] at (.75, .75) {$(1, 0)$};
                \node [anchor=south] at (2.25, .75) {$(2, 1)$};
                \node [anchor=north] at (-2.25, -.75) {$(-2, -1)$};
                \node [anchor=north] at (-.75, -.75) {$(-1, 0)$};
                \node [anchor=north] at (.75, -.75) {$(0, 1)$};
                \node [anchor=north] at (2.25, -.75) {$(1, 2)$};
                \node [anchor=south, scale=.8] at (-1.5, 0) {$(-1, -1)$};
                \node [anchor=south] at (1.5, 0) {$(1, 1)$};
                \node at (0, -3.5) {$\mathsf{D}^\mathrm{b} \mathsf{Mod} (\mathbb{C} Q)$};
            \end{scope}
        \end{tikzpicture}
        }
        \caption{An equivalence of categories}
        \label{fig-coh-p1-duality}
    \end{figure}
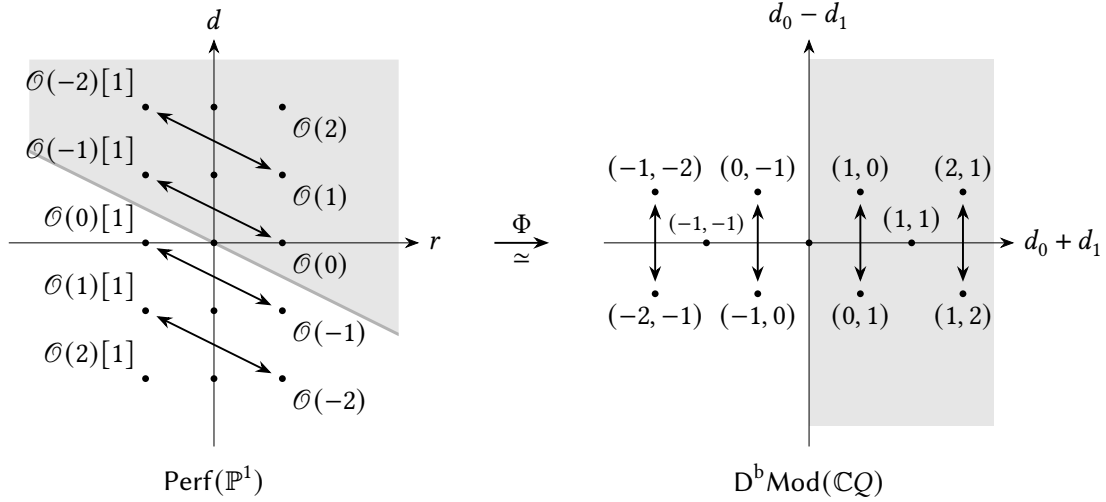

    In fact, under the isomorphism~$\Phi$,
    the self-dual structure on $\mathsf{Perf} (\mathbb{P}^1)$ given by
    $(I = \mathrm{id}_{\mathbb{P}^1}, \allowbreak
    L = \mathcal{O}_{\mathbb{P}^1} (-1), \allowbreak
    s = 1, \varepsilon)$
    corresponds to the self-dual structure on
    $\mathsf{D}^\mathrm{b} \mathsf{Mod} (\mathbb{C} Q)$
    given by the signs $(\varepsilon, ++)$
    in the notation of \cref{eg-tilde-a1},
    as shown in \cref{fig-coh-p1-duality}.
    Here, $r$ and~$d$ denote the rank and degree of a complex on~$\mathbb{P}^1$,
    and $(d_0, d_1)$ is the dimension vector of a representation of~$Q$.
    The two-way arrows indicate the dual operation,
    and the self-dual objects lie on the vertical axis on the left-hand side,
    or the horizontal axis on the right-hand side.
    The shaded regions indicate the usual heart of
    $\mathsf{D}^\mathrm{b} \mathsf{Mod} (\mathbb{C} Q)$
    and the corresponding heart of $\mathsf{Perf} (\mathbb{P}^1)$.
    The right-hand side can also be viewed
    either as the central charge of~$\tau$,
    or that of the stability condition on~$Q$
    given by the slope function $(1, -1)$.

    In particular, the DT invariants in this case
    coincide with those in \cref{eg-tilde-a1},
    and should be given by the conjectural formulae
    \cref{eq-tilde-a1-dt-1,eq-tilde-a1-dt-3-fix}.
\end{example}

\subsection{DT invariants for threefolds}
\label{subsec-threefolds}

\begin{para}
    We define DT invariants
    counting orthogonal or symplectic complexes on a Calabi--Yau threefold.
    These invariants are one of the main applications of our theory,
    and are an extension of the usual
    DT invariants studied by
    \textcite{thomas-2000-dt},
    \textcite{joyce-song-2012},
    \textcite{kontsevich-soibelman-motivic-dt}, and many others.
    We expect our invariants to be related to
    counting D-branes on Calabi--Yau $3$-orientifolds,
    as discussed in
    \textcite[\S5.2]{witten-1998-d-branes},
    \textcite{diaconescu-2007-orientifolds},
    and \textcite{hori-walcher-2008-orientifolds}.

    We also prove wall-crossing formulae for these invariants
    in \cref{thm-threefold-wcf},
    which relate the invariants for different
    Bridgeland stability conditions.
\end{para}

\begin{para}[Invariants]
    \label{para-threefold-inv}
    Let~$Y$ be a Calabi--Yau threefold over~$\mathbb{C}$,
    and fix the data $(I, L, s, \varepsilon)$
    as in \cref{para-coh-setup}.

    In this case, the derived stack $\bar{\mathcal{X}}$
    and the fixed locus~$\bar{\mathcal{X}}^\mathrm{sd}$
    are $(-1)$-shifted symplectic stacks.
    The stack~$\bar{\mathcal{X}}$ has an orientation data
    in the sense of \cref{para-orientation-data},
    by \textcite[Theorem~3.6]{joyce-upmeier-2021-orientation-data}.
    However, we do not know if the stack~$\bar{\mathcal{X}}^\mathrm{sd}$
    has an orientation in general.

    Let $\tau = (Z, \mathcal{P}) \in \mathrm{Stab}^{\circ, \mathrm{sd}} (Y)$
    be a self-dual Bridgeland stability condition on~$Y$,
    which is guaranteed to exist in the situation of \cref{eg-bridgeland}.
    For each phase $t \in \mathbb{R}$,
    let $\mathcal{X} (\tau; t) \subset \bar{\mathcal{X}}$
    be the open substack as in \cref{para-bridgeland-permissibility},
    which is a $(-1)$-shifted symplectic linear stack,
    and has a self-dual structure when $t = 0$.

    Given a class $\alpha \in K (Y)$ with
    $Z_\omega (\alpha) \in \mathbb{R}_{> 0} \cdot \mathrm{e}^{\uppi \mathrm{i} t}$,
    define the numerical and motivic DT invariants
    \begin{equation*}
        \mathrm{DT}_\alpha (\tau) \in \mathbb{Q} \ , \qquad
        \mathrm{DT}_\alpha^\mathrm{mot} (\tau)
        \in \hat{\mathbb{M}}^\mathrm{mon} (\mathbb{C}; \mathbb{Q}) \ ,
    \end{equation*}
    as in \cref{para-dt-linear,para-motivic-dt}
    for the stack~$\mathcal{X} (\tau; t)$
    with the trivial stability condition,
    where we take the sum of DT invariants
    of connected components of the open and closed substack
    $\mathcal{X} (\tau; t)_\alpha \subset \mathcal{X} (\tau; t)$,
    and we use the orientation of
    \textcite{joyce-upmeier-2021-orientation-data}
    for the motivic version.
    These invariants are not new,
    and can be constructed from the formalisms of
    \textcite{joyce-song-2012} and
    \textcite{kontsevich-soibelman-motivic-dt}.

    When $t = 0$, for each $\theta \in K^\mathrm{sd} (Y)$
    with $Z_\omega (\theta) \in \mathbb{R}_{> 0}$,
    we have the numerical self-dual DT invariant
    \begin{equation*}
        \mathrm{DT}^\mathrm{sd}_\theta (\tau) \in \mathbb{Q} \ ,
    \end{equation*}
    defined as in \cref{para-dt-sd}
    for the self-dual linear stack $\mathcal{X} (\tau; 0)$
    with the trivial stability condition,
    where we sum over connected components of
    $\mathcal{X} (\tau; 0)^\mathrm{sd}_\theta$.
    These are new invariants for Calabi--Yau threefolds,
    and are one of the main constructions of this paper.

    If one can construct a self-dual orientation data on
    $\mathcal{X} (\tau; 0)$
    in the sense of \cref{para-self-dual-orientation-data},
    then the motivic self-dual DT invariant
    $\mathrm{DT}^{\smash{\mathrm{mot,sd}}}_\theta (\tau)$
    will also be defined,
    as in \cref{para-motivic-dt}.
\end{para}

\begin{theorem}
    \label{thm-threefold-wcf}
    Let $Y$ be a Calabi--Yau threefold over $\mathbb{C}$.
    Choose the data $(I, L, s, \varepsilon)$
    as in \cref{para-coh-setup}.
    Let $\tau = (Z, \mathcal{P}),$
    $\tilde{\tau} = (\tilde{Z}, \tilde{\mathcal{P}})
    \in \mathrm{Stab}^\circ (Y)$
    be Bridgeland stability conditions.

    \begin{enumerate}
        \item
            \label{item-threefold-linear}
            If $\tau, \tilde{\tau}$
            can be connected by a path of length~$< 1/4$
            in $\mathrm{Stab}^\circ (Y)$,
            then for any class $\alpha \in K (Y)$ with
            $Z (\alpha) \neq 0$,
            the wall-crossing formula
            \cref{eq-wcf-dt} holds.

        \item
            \label{item-threefold-sd}
            If $\tau, \tilde{\tau} \in \mathrm{Stab}^{\circ, \mathrm{sd}} (Y)$,
            and they can be connected by a path of length~$< 1/4$
            in $\mathrm{Stab}^{\circ, \mathrm{sd}} (Y)$,
            then for any class $\theta \in K^\mathrm{sd} (Y)$ with
            $Z (\theta) \in \mathbb{R}_{> 0}$,
            the wall-crossing formula
            \cref{eq-wcf-dt-sd} holds.
    \end{enumerate}
    Here, the length of a path is defined as the supremum
    of sums of distances over all subdivisions.
    In \crefrange{eq-wcf-dt}{eq-wcf-dt-sd},
    we use $\tau, \tilde{\tau}$ in place of $\tau_+, \tau_-$.
    The sets $\uppi_0 (\mathcal{X}),$ $\uppi_0 (\mathcal{X}^\mathrm{sd})$
    in the formulae are defined using
    $\mathcal{X} = \mathcal{X} (\tau; \mathopen{]} t-1/4, t+1/4 \mathclose{[})$,
    where~$t$ is a phase of\/~$Z (\alpha)$ in~\cref{item-threefold-linear}
    or $t = 0$ in~\cref{item-threefold-sd}.
    The coefficients $\tilde{U} (\ldots)$,
    $\tilde{U}^\mathrm{sd} (\ldots)$
    are defined using the total order on phases in
    $\mathopen{]} t-1/2, t+1/2 \mathclose{[}$.

    Moreover, if we are given
    an orientation data on
    $\mathcal{X} (\tau; \mathopen{]} t-1/2, t+1/2 \mathclose{[})$,
    or a self-dual orientation data on
    $\mathcal{X} (\tau; \mathopen{]} -1/2, 1/2 \mathclose{[})$,
    respectively,
    then~\crefrange{item-threefold-linear}{item-threefold-sd}
    also hold for the motivic versions
    \crefrange{eq-wcf-dt-mot}{eq-wcf-dt-mot-sd},
    where~$\alpha$ has phase~$t$.
\end{theorem}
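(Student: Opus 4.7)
The plan is to reduce the theorem to a direct application of \cref{thm-wcf-dt}, by restricting to a common open $(-1)$-shifted symplectic linear substack of $\bar{\mathcal{X}}$ on which both $\tau$ and $\tilde{\tau}$ induce permissible stability conditions.

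To set this up, I would first fix a path $\gamma \colon [0, 1] \to \mathrm{Stab}^\circ (Y)$ of length $< 1/4$ connecting $\tau$ to $\tilde{\tau}$, taken in $\mathrm{Stab}^{\circ, \mathrm{sd}} (Y)$ for~\cref{item-threefold-sd}, and consider the open substack
\begin{equation*}
    \mathcal{X} = \mathcal{X} \bigl( \tau; \mathopen{]} t - 1/2, t + 1/2 \mathclose{[} \bigr) \subset \bar{\mathcal{X}}
\end{equation*}
constructed in \cref{para-bridgeland-openness}, choosing $t = 0$ in~\cref{item-threefold-sd} so that the interval is symmetric under the involution $(-)^\vee$. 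The definition of the metric $d$ on $\mathrm{Stab}^\circ (Y)$ ensures that for each $s \in [0, 1]$, every $\gamma (s)$-semistable object of phase within $\mathopen{]} t - 1/4, t + 1/4 \mathclose{[}$ has $\tau$-HN factors of phases within $\mathopen{]} t - 1/2, t + 1/2 \mathclose{[}$, hence lies in $\mathcal{X}$. The central charges $Z$ and $\tilde{Z}$ then induce stability conditions $\tau_+, \tau_-$ on $\mathcal{X}$ in the sense of \cref{para-linear-stack-stability}, with slope functions given by the phases of $Z (\alpha)$ and $\tilde{Z} (\alpha)$ taking values in the totally ordered set $\mathopen{]} t - 1/2, t + 1/2 \mathclose{[}$; these are self-dual in the situation of~\cref{item-threefold-sd}.

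Next, I would verify the permissibility of $\tau_\pm$ by combining the boundedness condition in \cref{para-bridgeland-permissibility} with the finite-HN-type analysis in \cref{para-bridgeland-openness}, which exhibits each $\mathcal{X}_\alpha$ as a finite union of quasi-compact $\tau$-HN strata, hence quasi-compact. Taking $\tau_0$ to be the trivial stability condition on $\mathcal{X}$ then provides a permissible stability condition that vacuously dominates both $\tau_+$ and $\tau_-$, so that the hypotheses of \cref{thm-wcf-dt} are met and the wall-crossing formulae \crefrange{eq-wcf-dt}{eq-wcf-dt-sd} follow. The motivic versions \crefrange{eq-wcf-dt-mot}{eq-wcf-dt-mot-sd} then come from the same application of the theorem using the hypothesised (self-dual) orientation data restricted to $\mathcal{X}$. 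The main obstacle will be checking that the phase-valued slope functions satisfy the seesaw and openness properties in \cref{para-linear-stack-stability} and induce the required $\Theta$-stratifications on $\mathcal{X}$: the seesaw property follows from $Z, \tilde{Z}$ being group homomorphisms together with the fact that all relevant phases lie in an interval of length $< 1$, and the $\Theta$-stratifications are produced directly by the argument of \cref{para-bridgeland-openness}. In the situation of~\cref{item-threefold-sd}, these constructions are automatically $\mathbb{Z}_2$-equivariant because $\tau$ and $\tilde{\tau}$ are self-dual and the interval is symmetric under $(-)^\vee$.
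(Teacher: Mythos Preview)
Your approach has a genuine gap: the claim that $\tilde{\tau}$ induces a stability condition on $\mathcal{X} = \mathcal{X}(\tau; \mathopen{]} t-1/2, t+1/2 \mathclose{[})$ in the sense of \cref{para-linear-stack-stability} fails at the boundary of the interval. If $E \in \mathcal{X}$ has $\tau$-phases close to $t \pm 1/2$, then since $d(\tau, \tilde{\tau})$ may be close to $1/4$, the $\tilde{\tau}$-HN factors of $E$ can have $\tau$-phases anywhere in $\mathopen{]} t-3/4, t+3/4 \mathclose{[}$ and hence need not lie in $\mathcal{X}$; so $\tilde{\tau}$-HN type does not give a $\Theta$-stratification of $\mathcal{X}$. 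Even the slope function is ill-posed: for $\beta \in \uppi_0(\mathcal{X})$ the phase of $\tilde{Z}(\beta)$ is only constrained to $\mathopen{]} t-3/4, t+3/4 \mathclose{[}$, not to your target set $\mathopen{]} t-1/2, t+1/2 \mathclose{[}$. Shrinking the interval does not help, as the same boundary issue recurs, and you cannot simply discard the boundary classes, since \cref{thm-wcf-dt} requires $\tau_\pm$ to be stability conditions on the whole linear stack.

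The paper circumvents this with a two-stage argument. First it fixes $\tau$ and the class $\alpha$ (or $\theta$) and proves a \emph{local} version on the much smaller stack $\mathcal{X}(\tau; t)$: for sufficiently small $\delta$, whenever $d(\tau, \tilde{\tau}) < \delta$, every $\tilde{\tau}$-HN factor of a $\tau$-semistable object of class $\alpha$ is again in $\mathcal{X}(\tau; t)$, so $\tilde{\tau}$ ``almost'' defines a stability condition there (on the finitely many classes of norm at most $|Z(\alpha)|$), which is enough to run \cref{thm-wcf-dt} with $\tau_0$ the trivial stability. Second, it subdivides the given path by compactness of $[0,1]$ into small steps, applies the local version at each step, and then uses \cref{lemma-s-comp} (composition of the coefficients $S$, $S^{\mathrm{sd}}$) to show that the composite wall-crossing equals the direct one from $\tau$ to $\tilde{\tau}$. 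Your one-step reduction collapses both stages and loses control at the boundary.
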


\begin{proof}
    \allowdisplaybreaks
    To avoid repetition,
    we prove \crefrange{item-threefold-linear}{item-threefold-sd}
    using a common argument, where we write~$\alpha$ for~$\theta$ for~\cref{item-threefold-sd}.

    We first prove the following claim:
    For fixed~$\tau$ and a fixed class~$\alpha$ or~$\theta$,
    there exists $\delta > 0$ such that
    the wall-crossing formulae hold whenever
    $d (\tau, \tilde{\tau}) < \delta$,
    with the sets $\uppi_0 (\mathcal{X})$, $\uppi_0 (\mathcal{X}^\mathrm{sd})$
    defined using $\mathcal{X} = \mathcal{X} (\tau; t)$,
    and we may take $\tau_+, \tau_-$ in the formulae to be
    either $\tau, \tilde{\tau}$
    or $\tilde{\tau}, \tau$.

    Write $\mathcal{A} = \mathcal{P} (t)$.
    Let $K \subset K (Y)$ be the set of Chern characters
    of $\tau$-semistable objects in $\mathsf{Perf} (Y)$,
    and $C \subset K$ the set of classes
    realized by objects in~$\mathcal{A}$.

    We choose $0 < \delta < 1/8$ such that
    $K \cap Z^{-1} (V_{4 \delta} (\mathrm{e}^{2 \delta} \cdot Z (\alpha))) \subset C$, where
    \[
        V_u (z) =
        \{ r \mathrm{e}^{\uppi \mathrm{i} \phi} \mid 0 \leq r \leq |z| \ , \ |\phi| \leq u \}
        \subset \mathbb{C} \ .
    \]
    If $\beta \in K (Y)$ is the class of a $\tau$-HN factor
    of a $\tilde{\tau}$-semistable object of class~$\alpha$,
    then $Z (\beta)$ must lie in $V_{2 \delta} (Z (\alpha))$.
    By the choice of~$\delta$,
    all such classes~$\beta$ have phase~$t$,
    and are hence equal to~$\alpha$.
    This implies that all $\tilde{\tau}$-semistable objects of class~$\alpha$
    are $\tau$-semistable and are in~$\mathcal{A}$.

    Similarly, we may assume that
    all $\tilde{\tau}$-semistable objects with phase in
    $[t - \delta, t + \delta]$
    and norm $\leq \mathrm{e}^\delta \cdot |Z (\alpha)|$
    are in~$\mathcal{A}$.
    Indeed, such objects have $\tau$-phase in
    $[t - 2 \delta, t + 2 \delta]$
    and $\tau$-norm $\leq \mathrm{e}^{2 \delta} \cdot |Z (\alpha)|$,
    and this property holds by the choice of~$\delta$.

    It follows that for any object in~$\mathcal{A}$ of class~$\alpha$,
    its $\tilde{\tau}$-HN factors
    also belong to~$\mathcal{A}$.
    In other words,
    $\tilde{\tau}$ almost defines a stability condition on
    $\mathcal{X} (\tau; t)$
    in the sense of \cref{para-linear-stack-stability},
    except that the $\Theta$-stratification is only defined on
    $\mathcal{X} (\tau; t)_\beta$
    for classes $\beta \in C$ with $|Z (\beta)| \leq |Z (\alpha)|$.
    However, this is enough to prove wall-crossing
    for~$\alpha$, as the other classes are irrelevant in the argument.
    The claim thus follows from \cref{thm-wcf-dt},
    where~$\tau$ corresponds to trivial stability on~$\mathcal{X} (\tau; t)$.

    We now turn to the original statement of the theorem.
    Choose a path $(\tau_s = (Z_s, \mathcal{P}_s))_{s \in [0, 1]}$
    of length $\ell < 1/4$,
    with $\tau_0 = \tau$ and $\tau_1 = \tilde{\tau}$.
    By the compactness of $[0, 1]$, our claim implies that
    we can choose $0 = s_0 < \cdots < s_n = 1$
    such that there are wall-crossing formulae
    between each $\tau_{s_i}$ and $\tau_{s_{i + 1}}$.
    We may thus apply \cref{eq-wcf-dt}, etc.,
    to express $\mathrm{DT}_\alpha (\tau_{s_0})$, etc.,
    in terms of invariants for~$\tau_{s_1}$,
    and so on,
    finally in terms of invariants for
    $\tau_{s_n} = \tilde{\tau}$.
    In each step, the involved invariants
    $\mathrm{DT}_{\beta} (\tau_{s_i})$
    must satisfy that~$Z (\beta)$ lies in the bounded region
    \begin{equation*}
        \{ r \mathrm{e}^{\uppi \mathrm{i} \phi} \mid
        r \geq 0 \ , \ |\phi - t| \leq \ell \}
        \cap
        \{ Z (\alpha) - r \mathrm{e}^{\uppi \mathrm{i} \phi} \mid
        r \geq 0 \ , \ |\phi - t| \leq \ell \}
        \subset \mathbb{C} \ ,
    \end{equation*}
    so that the sums \cref{eq-wcf-dt}, etc.,
    can not only be written using some
    $\uppi_0 (\mathcal{X} (\tau_{s_i}; t_i))$
    and its self-dual version,
    as in the argument above,
    but also using the larger set
    $\uppi_0 (\mathcal{X} (\tau; \mathopen{]} t-1/2, t+1/2 \mathclose{[}))$
    and its self-dual version,
    where the coefficients
    $\tilde{U} (\ldots)$, $\tilde{U}^{\mathrm{sd}} (\ldots)$
    are zero for the newly introduced terms.
    The support property of~$\tau$
    ensures that only finitely many non-zero terms
    appear in each step.

    It remains to prove that the coefficients
    $\tilde{U} (\ldots)$, $\tilde{U}^{\mathrm{sd}} (\ldots)$
    respect composition of wall-crossing formulae,
    so that the wall-crossing formulae
    obtained from the above process are equivalent to
    \cref{eq-wcf-dt}, etc.,
    from~$\tau$ directly to~$\tilde{\tau}$.
    This follows from the fact that the coefficients
    $S (\ldots)$, $S^\mathrm{sd} (\ldots)$
    respect composition, which was proved in
    \cref{lemma-s-comp}.
\end{proof}

\begin{para}[Generic stability conditions]
    \label{para-generic-stability}
    Following \textcite[Conjecture~6.12]{joyce-song-2012},
    we say that a stability condition~$\tau$ as above is \emph{generic},
    if for any $\alpha, \beta \in K (Y)$
    with $Z (\alpha) = \lambda Z (\beta) \neq 0$
    for some $\lambda \in \mathbb{R}_{> 0}$,
    we have the numerical condition
    $\vdim \bar{\mathcal{X}}_{\alpha, \beta}^+ = 0$.

    Similarly, when~$\tau$ is self-dual,
    we say that it is \emph{generic} as a self-dual stability condition,
    if it is generic as above,
    and for any $\alpha \in K (Y)$ of phase~$0$
    and $\theta \in K^\mathrm{sd} (Y)$,
    we have
    $\vdim \bar{\mathcal{X}}_{\alpha, \theta}^{\smash{\mathrm{sd}, +}} = 0$.

    By the first part in the proof of \cref{thm-threefold-wcf},
    combined with \cref{cor-wcf-symmetric},
    we see that if~$\tau \in \mathrm{Stab}^{\circ, \mathrm{sd}} (Y)$
    is generic, then for each class~$\alpha$ or~$\theta$ as in
    \cref{thm-threefold-wcf},
    there exists $\delta > 0$,
    such that the invariant
    $\mathrm{DT}_{\alpha} (\tau)$ or $\mathrm{DT}^\mathrm{sd}_{\theta} (\tau)$
    does not change if we move~$\tau$
    inside its $\delta$-neighbourhood.
    Moreover, this also holds for the motivic versions
    $\mathrm{DT}_{\alpha}^\mathrm{mot} (\tau)$ or
    $\mathrm{DT}^{\smash{\mathrm{mot,sd}}}_{\theta} (\tau)$,
    where the self-dual version requires a self-dual orientation data.
\end{para}

\begin{para}[Expectations on deformation invariance]
    We expect that the numeric version of
    the orthosymplectic DT invariants,
    $\mathrm{DT}_\theta^\mathrm{sd} (\tau)$,
    should satisfy \emph{deformation invariance},
    analogously to
    \textcite[Corollary~5.28]{joyce-song-2012} in the linear case,
    that is, they should stay constant under
    deformations of the complex structure of the threefold~$Y$.
    However, we have not yet been able to prove this,
    as it does not seem straightforward
    to adapt the strategy of \cite{joyce-song-2012}
    using Joyce--Song pairs to our case,
    and further work is needed.

    We do not expect the motivic version,
    $\mathrm{DT}_\theta^{\smash{\mathrm{sd,mot}}} (\tau)$,
    to satisfy deformation invariance.
\end{para}

\subsection{Vafa--Witten type invariants for surfaces}
\label{subsec-vw}

\begin{para}
    We construct a motivic version of orthosymplectic analogues
    of \emph{Vafa--Witten invariants} for algebraic surfaces,
    studied by
    \textcite{tanaka-thomas-2020-vw-i,tanaka-thomas-2018-vw-ii},
    \textcite{maulik-thomas-2018-vw},
    and \textcite{thomas-2020-vw}.
    We define our invariants for surfaces~$S$ with $K_S \leq 0$.

    Our invariants count \emph{self-dual Higgs complexes} on a surface,
    which are orthosymplectic complexes defined in \cref{subsec-coh},
    equipped with Higgs fields.
    They are a generalization of \emph{$G$-Higgs bundles},
    introduced by \textcite{hitchin-1987-higgs},
    for $G = \mathrm{O} (n)$ or $\mathrm{Sp} (2n)$.

    Via the spectral construction,
    these invariants can be seen as a version of
    orthosymplectic DT invariants in
    \cref{subsec-threefolds},
    for the non-compact Calabi--Yau threefold~$K_S$,
    the total space of the canonical bundle of the surface~$S$,
    with an involution that reverses the fibre direction.
\end{para}

\begin{para}[Higgs complexes]
    \label{para-vw-setup}
    Let~$S$ be a connected, smooth, projective algebraic surface over~$\mathbb{C}$,
    and fix the data $(I, L, s, \varepsilon)$ as in \cref{para-coh-setup}
    defining a self-dual structure~$\mathbb{D}$ on~$\mathsf{Perf} (S)$.

    For an object $E \in \mathsf{Perf} (S)$,
    a \emph{Higgs field} on~$E$ is a morphism
    \begin{equation*}
        \psi \colon E \longrightarrow E \otimes K_S
    \end{equation*}
    in~$\mathsf{Perf} (S)$.
    We call such a pair $(E, \psi)$ a
    \emph{Higgs complex} on~$S$.

    A \emph{self-dual Higgs complex} is then defined as
    a fixed point of the involution
    \begin{equation*}
        (E, \psi) \longmapsto (\mathbb{D} (E), -\mathbb{D} (\psi) \otimes K_S)
    \end{equation*}
    on the $\infty$-groupoid of Higgs complexes, where
    $\mathbb{D} (\psi) \colon \mathbb{D} (E) \otimes K_S^{-1} \to \mathbb{D} (E)$.

    More concretely,
    for a self-dual object $(E, \phi) \in \mathsf{Perf} (S)^\mathrm{sd}$
    with $\mathrm{Ext}^i (E, E \otimes K_S) = 0$ for all $i < 0$,
    where $\phi \colon E \simto \mathbb{D} (E)$,
    a self-dual Higgs field on $(E, \phi)$
    is the same data as a Higgs field
    $\psi \colon E \to E \otimes K_S$
    such that
    $(\phi \otimes K_S) \circ \psi = -(\mathbb{D} (\psi) \otimes K_S) \circ \phi$
    as morphisms $E \to \mathbb{D} (E) \otimes K_S$.
\end{para}

\begin{para}[Moduli stacks]
    Let $\bar{\mathcal{Y}}$ be the derived moduli stack
    of perfect complexes on~$S$.
    Let $\bar{\mathcal{X}} = \mathrm{T}^* [-1] \, \bar{\mathcal{Y}}$
    be the $(-1)$-shifted cotangent stack of~$\bar{\mathcal{Y}}$,
    equipped with the canonical $(-1)$-shifted symplectic structure.

    Then~$\bar{\mathcal{X}}$
    is a moduli stack of Higgs complexes on~$S$,
    since at a $\mathbb{C}$-point $E \in \bar{\mathcal{Y}} (\mathbb{C})$,
    we have
    \begin{equation*}
        \mathbb{L}_{\bar{\mathcal{Y}}} [-1] |_E \simeq
        \mathbb{R} \mathrm{Hom}_S (E, E)^\vee [-2] \simeq
        \mathbb{R} \mathrm{Hom}_S (E, E \otimes K_S) \ ,
    \end{equation*}
    parametrizing Higgs fields on~$E$.

    The self-dual structure on~$\mathrm{Perf} (S)$
    determines a $\mathbb{Z}_2$-action on~$\bar{\mathcal{Y}}$,
    which induces a $\mathbb{Z}_2$-action on~$\bar{\mathcal{X}}$.
    We have $\bar{\mathcal{X}}^\mathrm{sd} \simeq
    \mathrm{T}^* [-1] \, \bar{\mathcal{Y}}^\mathrm{sd}$,
    giving $\bar{\mathcal{X}}^\mathrm{sd}$
    a canonical $(-1)$-shifted symplectic structure.

    We regard~$\bar{\mathcal{X}}^\mathrm{sd}$
    as a moduli stack of self-dual Higgs complexes on~$S$.
    This description agrees with
    the definition of a self-dual Higgs field,
    as the $(-1)$-shifted
    tangent map of the involution~$\mathbb{D}$,
    as a map $\mathbb{R} \mathrm{Hom} (E, E) \simto
    \mathbb{R} \mathrm{Hom} (\mathbb{D} (E), \mathbb{D} (E))$,
    is given by $\psi \mapsto -\mathbb{D} (\psi)$.
\end{para}

\begin{para}[Stability conditions]
    \label{para-vw-stab}
    We now restrict to the case when
    the anti-canonical bundle~$K_S^{-1}$ of~$S$
    is ample or trivial,
    so that~$S$ is either a del~Pezzo surface,
    a K3 surface, or an abelian surface.
    We abbreviate this condition as $K_S \leq 0$.

    In this case,
    for any $\tau \in \mathrm{Stab}^\circ (S)$
    and any $E \in \mathsf{Perf} (S)$,
    every Higgs field $\psi \colon E \to E \otimes K_S$
    respects the $\tau$-HN filtration of~$E$,
    since choosing a non-zero map
    $\xi \colon K_S \to \mathcal{O}_S$,
    the composition
    $\xi \circ \psi \colon E \to
    E \otimes K_S \to E$
    must preserve the HN filtration.
    Therefore, heuristically,
    a Higgs complex $(E, \psi)$ is $\tau$-semistable
    if and only if $E$ is $\tau$-semistable.
    This justifies the following series of definitions:

    For each interval $J \subset \mathbb{R}$
    of length $|J| < 1$,
    let $\mathcal{Y} (\tau; J) \subset \bar{\mathcal{Y}}$
    be the open substack of objects in~$\mathcal{P} (J)$,
    and let
    $\mathcal{X} (\tau; J)
    = \mathrm{T}^* [-1] \, \mathcal{Y} (\tau; J)
    \subset \bar{\mathcal{X}}$
    be the corresponding open substack.
    The stacks $\mathcal{X} (\tau; J)$ and $\mathcal{Y} (\tau; J)$
    are derived linear stacks.
    When $J = -J$, they are also self-dual derived linear stacks,
    and we have
    $\mathcal{X} (\tau; J)^\mathrm{sd} \simeq
    \mathrm{T}^* [-1] \, \mathcal{Y} (\tau; J)^\mathrm{sd}$.

    Moreover, $\tau$ defines permissible stability conditions
    on $\mathcal{X} (\tau; J)$ and $\mathcal{Y} (\tau; J)$,
    in the sense of \cref{para-sd-linear-stack-stability}.
    Here, the $\Theta$-stratification on~$\mathcal{X} (\tau; J)$
    can be obtained by following the proof of
    \textcite[Theorem~6.5.3]{halpern-leistner-instability},
    similarly to
    \cref{para-bridgeland-openness},
    where the conditions~(S) and~(B)
    follow from the respective properties of~$\mathcal{Y} (\tau; J)$.
\end{para}

\begin{para}[Invariants]
    Suppose $K_S \leq 0$ and $\tau \in \mathrm{Stab}^{\circ, \mathrm{sd}} (S)$.
    For a class $\alpha \in K (S)$ with $Z (\alpha) \neq 0$
    or $\theta \in K^\mathrm{sd} (S)$ of phase~$0$,
    define the \emph{Vafa--Witten type invariants}
    \begin{equation*}
        \mathrm{vw}_\alpha (\tau) \in \mathbb{Q} \ ,
        \qquad
        \mathrm{vw}^\mathrm{sd}_\theta (\tau) \in \mathbb{Q}
    \end{equation*}
    counting semistable Higgs complexes of class~$\alpha$
    or semistable self-dual Higgs complexes of class~$\theta$,
    as the DT invariants in \cref{subsec-dt}
    for the $(-1)$-shifted symplectic linear stack~$\mathcal{X} (\tau; t)$
    with the trivial stability condition,
    where $t \in \mathbb{R}$
    is a phase of~$Z (\alpha)$ or $t = 0$ for~$\theta$.

    Moreover, since~$\mathcal{X} (\tau; t)$
    and~$\mathcal{X} (\tau; 0)^\mathrm{sd}$
    are $(-1)$-shifted cotangent stacks,
    they come with canonical orientations,
    which define an orientation data on~$\mathcal{X} (\tau; t)$
    and a self-dual orientation data on~$\mathcal{X} (\tau; 0)$.
    We use them to define \emph{motivic Vafa--Witten type invariants}
    \begin{equation*}
        \mathrm{vw}^\mathrm{mot}_\alpha (\tau) \ ,
        \qquad
        \mathrm{vw}^{\smash{\mathrm{mot,sd}}}_\theta (\tau)
        \in \hat{\mathbb{M}}^\mathrm{mon} (\mathbb{C}; \mathbb{Q}) \ .
    \end{equation*}
\end{para}

\begin{para}[Wall-crossing]
    \label[theorem]{thm-vw-wcf}
    We have the following theorem
    stating the wall-crossing formulae
    for our Vafa--Witten invariants,
    which can be proved analogously to
    \cref{thm-threefold-wcf}.
\end{para}

\begin{theorem*}
    Let $S$ be a surface with $K_S \leq 0$,
    and choose the data $(I, L, s, \varepsilon)$
    as in \cref{para-vw-setup}.
    Let $\tau, \tilde{\tau} \in \mathrm{Stab}^\circ (S)$
    be Bridgeland stability conditions.

    \begin{enumerate}
        \item
            \label{item-vw-linear}
            If $\tau, \tilde{\tau}$
            can be connected by a path of length~$< 1/4$
            in $\mathrm{Stab}^\circ (S)$,
            then for any class $\alpha \in K (S)$ with
            $Z (\alpha) \neq 0$,
            the wall-crossing formulae
            \cref{eq-wcf-dt,eq-wcf-dt-mot}
            hold for the invariants
            $\mathrm{vw}_\alpha (-)$,
            $\mathrm{vw}^\mathrm{mot}_\alpha (-)$
            when changing between $\tau$ and~$\tilde{\tau}$.

        \item
            \label{item-vw-sd}
            If $\tau, \tilde{\tau} \in \mathrm{Stab}^{\circ, \mathrm{sd}} (S)$,
            and they can be connected by a path of length~$< 1/4$
            in $\mathrm{Stab}^{\circ, \mathrm{sd}} (S)$,
            then for any class $\theta \in K^\mathrm{sd} (S)$ with
            $Z (\theta) \in \mathbb{R}_{> 0}$,
            the wall-crossing formulae
            \cref{eq-wcf-dt-sd,eq-wcf-dt-mot-sd}
            hold for the invariants
            $\mathrm{vw}^\mathrm{sd}_\theta (-)$,
            $\mathrm{vw}^{\smash{\mathrm{mot,sd}}}_\theta (-)$
            when changing between $\tau$ and~$\tilde{\tau}$.
    \end{enumerate}
    Here, the precise formulations of the wall-crossing formulae
    are as in \cref{thm-threefold-wcf}.
\end{theorem*}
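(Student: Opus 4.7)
The plan is to mirror the proof of \cref{thm-threefold-wcf} verbatim, with the moduli stacks of complexes $\mathcal{X}(\tau;J)\subset\bar{\mathcal{X}}$ of that proof replaced by the $(-1)$-shifted cotangent stacks $\mathcal{X}(\tau;J)=\mathrm{T}^*[-1]\,\mathcal{Y}(\tau;J)$ constructed in \cref{para-vw-stab}. The key observation that makes the replacement work is the one already recorded in \cref{para-vw-stab}: since $K_S\leq 0$, any Higgs field $\psi\colon E\to E\otimes K_S$ preserves the $\tau$-HN filtration of~$E$, so a Higgs complex $(E,\psi)$ is $\tau$-semistable if and only if $E$ is. Consequently, on each stratum the stability theory on $\mathcal{X}(\tau;J)$ is completely controlled by that on $\mathcal{Y}(\tau;J)$, and the whole machinery of permissibility, $\Theta$-stratifications, and stability measures transfers from $\mathcal{Y}(\tau;J)$ to $\mathcal{X}(\tau;J)$ without modification.

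First, I would establish the local wall-crossing claim: for fixed $\tau\in\mathrm{Stab}^{\circ,\mathrm{sd}}(S)$ and a fixed class $\alpha$ or $\theta$, there exists $\delta>0$ such that whenever $d(\tau,\tilde\tau)<\delta$ the formulae \cref{eq-wcf-dt,eq-wcf-dt-mot} or \cref{eq-wcf-dt-sd,eq-wcf-dt-mot-sd} hold between $\tau$ and $\tilde\tau$. This is proved exactly as in the first part of the proof of \cref{thm-threefold-wcf}: choose $\delta$ so small that every $\tilde\tau$-semistable object of phase near~$t$ and norm bounded by $\mathrm{e}^\delta|Z(\alpha)|$ lies in $\mathcal{A}=\mathcal{P}(t)$, so $\tilde\tau$ defines (on the relevant classes) a permissible self-dual stability condition on the $(-1)$-shifted symplectic self-dual linear stack $\mathcal{X}(\tau;t)$ in the sense of \cref{para-sd-linear-stack-stability}, with $\tau$ corresponding to the trivial stability condition. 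The quasi-compactness and $\Theta$-stratification requirements follow from the corresponding properties of $\mathcal{Y}(\tau;t)$ discussed in \cref{para-vw-stab}. We then invoke \cref{thm-wcf-dt}, using that the canonical orientation and self-dual orientation data on the cotangent stacks $\mathcal{X}(\tau;t)$ and $\mathcal{X}(\tau;0)^\mathrm{sd}$ are available, to obtain the motivic versions as well.

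Next, I would globalize along a path of length $\ell<1/4$ in $\mathrm{Stab}^\circ(S)$ or $\mathrm{Stab}^{\circ,\mathrm{sd}}(S)$, exactly as in \cref{thm-threefold-wcf}: by compactness, the path subdivides into finitely many steps $\tau_{s_0},\dotsc,\tau_{s_n}$ each within the local radius, and the invariants for $\alpha$ or $\theta$ at $\tau_{s_0}=\tau$ can be expressed successively in terms of those at $\tau_{s_{i+1}}$ until reaching $\tilde\tau$. Because the path has length $<1/4$, all intermediate classes $\beta$ that appear have $Z(\beta)$ in the bounded region
\begin{equation*}
    \{r\mathrm{e}^{\uppi\mathrm{i}\phi}\mid r\geq 0,\ |\phi-t|\leq\ell\}\cap
    \{Z(\alpha)-r\mathrm{e}^{\uppi\mathrm{i}\phi}\mid r\geq 0,\ |\phi-t|\leq\ell\}\ ,
\end{equation*}
which is finite by the support property of $\tau$, so the sums remain finite, and all involved classes belong to $\uppi_0(\mathcal{X}(\tau;\mathopen]t-1/2,t+1/2\mathclose[))$ and its self-dual version, which gives the phase ordering used to evaluate the combinatorial coefficients $\tilde U(\ldots)$ and $\tilde U^\mathrm{sd}(\ldots)$.

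The final and main technical step is to verify that composing the step-by-step wall-crossing formulae reproduces the direct formula between $\tau$ and $\tilde\tau$. This is the combinatorial heart of the argument and is identical to the corresponding step in \cref{thm-threefold-wcf}: by the definition of $\tilde U,\tilde U^\mathrm{sd}$ via the lattice algebra homomorphism applied to \cref{thm-wcf-epsilon-antisym}, compatibility of composition reduces to the compatibility of the coefficients $S(\ldots)$ and $S^\mathrm{sd}(\ldots)$ under composition, which is precisely the content of \cref{lemma-s-comp}. I expect that no further ingredient is needed; the main subtlety, if any, is bookkeeping the fact that at each step the relevant open substack of $\mathcal{X}(\tau;t)$ carries the $(-1)$-shifted symplectic structure and (self-dual) orientation data inherited from the cotangent construction, so that the motivic integral identity and hence the full motivic wall-crossing from \cref{thm-wcf-dt} applies.
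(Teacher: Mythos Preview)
Your proposal is correct and follows exactly the approach the paper indicates: the paper does not give an explicit proof but states that the theorem ``can be proved analogously to \cref{thm-threefold-wcf}'', and you have accurately expanded what that analogy entails, including the key input from \cref{para-vw-stab} that Higgs fields preserve HN~filtrations when $K_S\leq 0$, and the availability of canonical (self-dual) orientation data on the cotangent stacks for the motivic versions. One small slip: in your local claim you write $\tau\in\mathrm{Stab}^{\circ,\mathrm{sd}}(S)$, but for part~\cref{item-vw-linear} one only needs $\tau\in\mathrm{Stab}^\circ(S)$; this is cosmetic and does not affect the argument.
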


\begin{para}[The case of K3 surfaces]
    Suppose that~$S$ is a K3 surface or an abelian surface.
    Then for any $E, F \in \mathsf{Perf} (S)$,
    we have the numerical relations
    \begin{equation*}
        \operatorname{rk} \mathrm{Ext}_S^\bullet (E, F)
        = \operatorname{rk} \mathrm{Ext}_S^\bullet (F, E) \ ,
        \qquad
        \operatorname{rk} \mathrm{Ext}_S^\bullet (E, \mathbb{D} (E))^{\mathbb{Z}_2}
        = \operatorname{rk} \mathrm{Ext}_S^\bullet (\mathbb{D} (E), E)^{\mathbb{Z}_2} \ ,
    \end{equation*}
    where `$\operatorname{rk}$'
    denotes the alternating sum of dimensions,
    and $(-)^{\mathbb{Z}_2}$ denotes the fixed part of the involution
    $\phi \mapsto \mathbb{D} (\phi)$.
    These relations imply that
    $\bar{\mathcal{X}}$ and $\bar{\mathcal{X}}^\mathrm{sd}$
    are numerically symmetric in the sense of \cref{para-symmetric-stacks}.
    By \cref{cor-wcf-symmetric}
    and \cref{thm-vw-wcf}, the invariants
    $\mathrm{vw}_\alpha (-)$,
    $\mathrm{vw}^\mathrm{sd}_\theta (-)$,
    $\mathrm{vw}^\mathrm{mot}_\alpha (-)$,
    and $\mathrm{vw}^{\smash{\mathrm{mot,sd}}}_\theta (-)$
    are locally constant functions on
    $\mathrm{Stab}^\circ (S)$ or
    $\mathrm{Stab}^{\circ, \mathrm{sd}} (S)$.
\end{para}

\phantomsection
\addcontentsline{toc}{section}{References}
\sloppy
\setstretch{1.1}
\renewcommand*{\bibfont}{\normalfont\small}
\printbibliography

\par\noindent\rule{0.38\textwidth}{0.4pt}
{\par\noindent\small
\hspace*{2em}Chenjing Bu\qquad
\texttt{bu@maths.ox.ac.uk}
\\[-2pt]
\hspace*{2em}Mathematical Institute, University of Oxford, Oxford OX2 6GG, United Kingdom.}

\end{document}